\setlist[enumerate]{label=\arabic*.}
\setlist{leftmargin=10mm}
\numberwithin{equation}{section}
\newtheorem{Theorem}[equation]{Theorem}
\newtheorem{Lemma}[equation]{Lemma}
\newtheorem{Proposition}[equation]{Proposition}
\newtheorem{Corollary}[equation]{Corollary}
\theoremstyle{definition}
\newtheorem{Definition}[equation]{Definition}
\newtheorem{Remark}[equation]{Remark}
\newtheorem{Example}[equation]{Example}
\newtheorem{Claim}[equation]{Claim}
\newcommand{\dmd}{\diamondsuit}
\newcommand{\im}{\operatorname{im}}
\newcommand{\wh}{\widehat}
\newcommand{\N}{\mathbb{N}}
\newcommand{\Z}{\mathbb{Z}}
\newcommand{\Q}{\mathbb{Q}}
\newcommand{\F}{\mathbb{F}}
\newcommand{\KS}{\mathrm{KS}}
\renewcommand{\O}{\mathcal{O}}
\newcommand{\plim}{\textstyle \varprojlim}
\newcommand{\coker}{\operatorname{coker}}
\newcommand{\Spec}{\operatorname{Spec}}
\newcommand{\Spf}{\operatorname{Spf}}
\newcommand{\Spa}{\operatorname{Spa}}
\newcommand{\id}{{\operatorname{id}}}
\newcommand{\an}{{\mathrm{an}}}
\newcommand{\dR}{\mathrm{dR}}
\newcommand{\hotimes}{\widehat{\otimes}}
\newcommand{\et}{{\operatorname{\acute{e}t}}}
\newcommand{\proet}{{\operatorname{pro\acute{e}t}}}
\newcommand{\qproet}{{\operatorname{qpro\acute{e}t}}}
\newcommand{\HT}{\operatorname{HT}}
\newcommand{\red}{\mathrm{red}}
\newcommand{\wt}{\widetilde}
\newcommand{\wtOm}{\wt\Omega}
\newcommand{\rk}{\mathrm{rk}}
\renewcommand{\wh}{\widehat}
\renewcommand{\lim}{\varprojlim}
\newcommand{\tf}{[\tfrac{1}{p}]}
\newcommand{\aeq}{\stackrel{a}{=}}\newcommand{\cH}{{\ifmmode \check{H}\else{\v{C}ech}\fi}}
\begin{document}
	\title[The relative Hodge--Tate spectral sequence for rigid spaces]{The relative Hodge--Tate spectral sequence\\for rigid analytic spaces}
	\author{Ben Heuer}
	\maketitle
	
	\section{Introduction}
	\begin{abstract}
		We construct a relative Hodge--Tate spectral sequence for any smooth proper morphism of rigid analytic spaces over a perfectoid field extension of $\Q_p$.  To this end, we generalise Scholze's strategy in the absolute case by using smoothoid adic spaces.  As our main additional ingredient, we prove a perfectoid version of Grothendieck's ``cohomology and base-change''. We also use this to prove local constancy of Hodge numbers in the rigid analytic setting, and deduce that the relative Hodge--Tate spectral sequence degenerates. 
	\end{abstract}
	\subsection{The relative Hodge--Tate sequence}
	Let $C$ be a complete algebraically closed field extension of $\Q_p$ and let $X\to \Spa(C)$ be a smooth proper rigid space over $C$. Then there is a ``Hodge--Tate spectral sequence'', namely a natural $E_2$-spectral sequence of $C$-vector spaces
	\begin{equation}\label{eq:HTss-absolute}
		E_2^{ij}= H^i(X,\Omega^j_X(-j))\Rightarrow H^{i+j}_{\et}(X,\Q_p)\otimes_{\Q_p}C,
	\end{equation}
	where $(-j)$ denotes the Tate twist.
	The existence of this spectral sequence was first conjectured by Tate \cite[\S4.1]{tate1967p}. When $X$ is algebraic and admits a model over a discretely valued subfield $K_0$ with perfect residue field, it was proved by Faltings \cite[III Theorem~4.1]{faltings1988p}, and different proofs were later given by Tsuji \cite{TsujiHT} and Nizio{\l}  \cite{Niziol-semistable}. Scholze then constructed the spectral sequence \Cref{eq:HTss-absolute} in the more general rigid analytic setting \cite[Theorem~3.20]{ScholzeSurvey}. His approach is based on the pro-\'etale site $X_\proet$, which he introduced in \cite[\S3]{Scholze_p-adicHodgeForRigid}.
	
	The Hodge--Tate spectral sequence degenerates at the $E_2$-page: In the arithmetic setting of Faltings, this can be seen by an argument of Tate using Galois cohomology \cite[\S4.1]{tate1967p}. In the above generality, it is a recent result of Bhatt--Morrow--Scholze \cite[Theorem~1.7.(ii)]{BMS}.
	
	\medskip
	
	The aim of this article is to construct a relative version of the Hodge--Tate spectral sequence:
	
	\begin{Theorem}[\Cref{t:relHTSS}]\label{t:intro-HT-spectral-seq}
		Let $K$ be a perfectoid field over $\Q_p$.
		Let $f: X\to  S$ be a smooth proper morphism of reduced rigid spaces over $K$.  Let $ S_{\proet}$ be the pro-\'etale site  with its completed structure sheaf $\wh\O_S$. Let $\nu: S_{\proet}\to  S_{\an}$ be the natural morphism. 
		\begin{enumerate}
			\item  There is a natural first quadrant spectral sequence of $\wh\O_S$-modules on $ S_{\proet}$
			\begin{equation}\label{eq:main-thm-intro}
			 E_2^{ij}=\nu^{\ast}R^if_{\ast}\Omega^j_{ X| S}\{-j\}\Rightarrow (R^{i+j}f_{\proet\ast}\wh\Z_p)\otimes_{\wh\Z_p} \wh\O_S.
			 \end{equation}
			 Here $\wh\Z_p:=\varprojlim_n \nu_X^{\ast}(\Z/p^n)$ for the natural morphism of sites $\nu_X:X_\proet\to X_\an$.
			\item The sheaves $R^if_{\ast}\Omega^j_{ X| S}\{-j\}$ are analytic vector bundles. In contrast, the abutment  $(R^{i+j}f_{\proet\ast}\wh \Z_p)\otimes_{\wh\Z_p} \wh\O$ is instead a pro-\'etale vector bundle.
			\item The spectral sequence in part 1 degenerates at the $E_2$-page.
		\end{enumerate}
	\end{Theorem}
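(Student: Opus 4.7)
The plan is to follow Scholze's strategy from the absolute case, replacing $\Spa(C)$ by a general rigid base $S$ and working throughout on the relative pro-\'etale site. The central object is the complex $Rf_{\proet\ast}\wh\O_X$ on $S_\proet$, which I would analyze from two sides: a relative Hodge--Tate decomposition describing it in terms of the relative differentials $\Omega^j_{X|S}$, and a relative primitive comparison identifying it with $(Rf_{\proet\ast}\wh\Z_p)\otimes_{\wh\Z_p}\wh\O_S$. Part~1 of the theorem will fall out by combining these two inputs via the standard hypercohomology spectral sequence for this complex.

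For the first input, the strategy is to construct, pro-\'etale locally on $S$, a smoothoid cover of $X$ on which the cohomology of $\wh\O_X$ is computable by a toric tower \`a la Faltings--Scholze, producing a local Hodge--Tate decomposition of the shape
\[ Rf_{\proet\ast}\wh\O_X\simeq \bigoplus_j \nu^*(R f_{\ast}\Omega^j_{X|S})\{-j\}[-j]. \]
The key technical point is to globalize this local computation to a coherent statement on $S_\proet$, and this is exactly where the perfectoid cohomology-and-base-change promised in the abstract enters. The second input, the relative primitive comparison $Rf_{\proet\ast}\wh\O_X\isomarrow (Rf_{\proet\ast}\wh\Z_p)\otimes_{\wh\Z_p}\wh\O_S$, should then follow from the absolute primitive comparison applied fiberwise, together with the same base-change input to propagate the isomorphism to a neighbourhood of each geometric point.

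For part~2, the perfectoid cohomology and base change ensures that each $R^if_{\ast}\Omega^j_{X|S}$ is a coherent sheaf on $S_\an$ of locally constant fiberwise rank, hence an analytic vector bundle. On the pro-\'etale side, the base-change result should similarly imply that $R^{i+j}f_{\proet\ast}\wh\Z_p$ is locally a $\wh\Z_p$-local system on $S_\proet$, so that its tensor product with $\wh\O_S$ is a pro-\'etale vector bundle.

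For part~3, degeneration reduces to a fiberwise rank computation. Since both the $E_2$-page and the abutment are vector bundles by part~2, the spectral sequence degenerates over the reduced base $S$ if and only if the sum of the ranks of the $E_2$-entries equals the rank of the abutment. Restricting to a geometric point $s\in S$, the spectral sequence specializes to the absolute Hodge--Tate spectral sequence of $X_s$, which degenerates by Bhatt--Morrow--Scholze. Combined with the local constancy of Hodge numbers mentioned in the abstract, the fiberwise dimensions match, and this forces the differentials to vanish globally on $S$. The principal obstacle is the construction of the relative Hodge--Tate decomposition in a form that genuinely globalizes over $S_\proet$: this is precisely the role played by the perfectoid cohomology-and-base-change theorem, and formulating and proving that statement in a form strong enough to yield vector bundles (not merely coherent or almost-coherent sheaves) is the heart of the matter.
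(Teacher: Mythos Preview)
Your overall strategy---smoothoid local computation of $R\nu_\ast\wh\O$, relative primitive comparison, and perfectoid cohomology-and-base-change---matches the paper's. But there is a genuine gap in how you organize Part~1, and a couple of missing inputs.

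\textbf{The ``local decomposition'' is circular.} You propose to prove, pro-\'etale locally on $S$, a splitting
\[
Rf_{\proet\ast}\wh\O_X\simeq\bigoplus_j\nu^\ast(Rf_\ast\Omega^j_{X|S})\{-j\}[-j].
\]
This statement \emph{is} the degeneration (Part~3), so taking it as input for Part~1 begs the question. The toric-tower computation only gives a decomposition of $R\nu_\ast\wh\O_X$ \emph{locally on $X$}; it does not globalize to a decomposition of the pushforward to $S$. The paper instead constructs the spectral sequence directly: for each affinoid perfectoid $S'\to S$ in $S_\proet$, with base-change $X'=X\times_S S'$, one takes the Leray sequence for $X'_\proet\to X'_\an$. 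The smoothoid identification $R^j\nu_\ast\wh\O_{X'}=g'^\ast\wtOm^j_{X|S}$ gives $E_2^{ij}=H^i_\an(X',g'^\ast\wtOm^j_{X|S})$, and then cohomology-and-base-change identifies the sheafification as $\nu^\ast R^if_\ast\wtOm^j_{X|S}$. No decomposition is needed or claimed.

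\textbf{Logical order and missing inputs.} The paper proves the local constancy of Hodge numbers \emph{before} constructing the spectral sequence, not as a consequence of base-change alone: one combines upper semicontinuity of $s\mapsto\dim H^i(X_s,\Omega^j_{X_s})$ with the absolute Hodge--Tate degeneration (Bhatt--Morrow--Scholze) and the local constancy of $\dim H^n_\et(X_s,\Q_p)$ to force each summand to be constant. This is what makes Grauert's theorem applicable and feeds into the base-change isomorphism. Separately, the fact that $R^nf_{\proet\ast}\wh\Z_p$ is locally constant is not a ``base-change result'': it comes from Scholze--Weinstein's theorem that $R^nf_{\et\ast}\F_p$ is an \'etale local system, followed by a limit argument.

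\textbf{Degeneration.} Your rank-matching argument needs care: the $E_r$-pages for $r\geq 3$ are not a priori vector bundles, so ``equality of ranks forces degeneration'' is not immediate on $S_\proet$. The paper argues differently: the $E_2$-differentials are maps between sheaves of the form $\nu^\ast V$ with $V$ an analytic vector bundle; when $S$ is smooth, $\nu_\ast\wh\O_S=\O_S$, so these maps descend to maps of analytic vector bundles on $S_\an$, which vanish since they vanish on every fibre (by naturality and absolute degeneration). For general reduced $S$, one pulls back along a v-cover by a smooth rigid space (Guo) and reduces to the smooth case.
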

	Here $\{-j\}$ denotes a Breuil--Kisin--Fargues twist over $K$, see \Cref{d:BKF-twist}. This can be canonically identified with a Tate twist $(-j)$ as in \eqref{eq:HTss-absolute} if $K$ contains all $p$-power unit roots.

	For any $n\in \N$, \Cref{t:intro-HT-spectral-seq} equips $(R^{n}f_{\proet\ast}\wh\Z_p)\otimes_{\wh\Z_p}\wh\O$ with a natural Hodge--Tate filtration whose graded pieces are given by the Hodge cohomology groups  $\nu^\ast R^if_{\ast}\Omega^j_{ X| S}\{-j\}$.

	\Cref{t:intro-HT-spectral-seq} is closely related to the following rigid analytic version of a well-known result of Deligne \cite[Th\'eor\`eme~5.5]{Deligne-HTdeg}, the local constancy of Hodge numbers in families:
	
	\begin{Theorem}[\Cref{p:Deligne-constant-fibre-dim}]\label{t:deligne-intro}
		The sheaf $R^if_{\ast}\Omega^j_{ X| S}$ is finite locally free for any $i,j\geq 0$, and its formation commutes with base-change. For varying geometric points $s:\Spa(C,C^+)\to  S$, the Hodge numbers $\dim_CH^i(X_s,\Omega^j_{X_s})$	of the fibres $X_s$
		are therefore locally constant on $ S$.
	\end{Theorem}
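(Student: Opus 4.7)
My strategy adapts Deligne's classical approach from complex algebraic geometry, using the $p$-adic Hodge theory already established in the absolute case as input. The first reduction is that it suffices to establish local constancy of the fiber Hodge numbers $h^{ij}(s):=\dim_C H^i(X_s,\Omega^j_{X_s})$: once these are locally constant, the sheaf $R^if_{\ast}\Omega^j_{X|S}$---which is coherent on $S$ by Kiehl's theorem, since $f$ is proper---is automatically finite locally free with formation commuting with base-change by a rigid analytic cohomology-and-base-change argument applied to the $S$-flat coherent sheaf $\Omega^j_{X|S}$ (the flatness following from smoothness of $f$). The heart of the argument is thus to prove local constancy.

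I plan to combine upper semi-continuity with local constancy of the ``anti-diagonal'' sums $\sigma^n(s):=\sum_{i+j=n}h^{ij}(s)$. Upper semi-continuity of each $h^{ij}$ in the rigid setting should follow from representing $Rf_{\ast}\Omega^j_{X|S}$ Zariski-locally on $S$ by a bounded complex of finite free coherent $\O_S$-modules, mirroring the classical Grauert--Grothendieck argument.

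For the sums $\sigma^n$, I would invoke the absolute Hodge--Tate spectral sequence of Scholze on each geometric fiber $X_s$, together with its degeneration at $E_2$ due to Bhatt--Morrow--Scholze, to obtain
\[ \sigma^n(s) \;=\; \dim_{\Q_p} H^n_{\et}(X_s,\Q_p). \]
Since $f$ is smooth and proper, each $R^nf_{\et\ast}\Z/p^m$ is a local system on $S_{\et}$ by smooth and proper base change for rigid spaces (Huber, Scholze), and passing to the $p$-adic limit shows that $\sigma^n$ is locally constant on $S$. Combined with upper semi-continuity of each $h^{ij}$, an induction on $n=i+j$ (or the standard ``constant sum of semi-continuous'' argument) forces each individual Hodge number $h^{ij}$ to be locally constant, as required.

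The main obstacle I expect is the upper semi-continuity and cohomology-and-base-change input: while classical in algebraic geometry, these require some care to formulate and prove cleanly for proper smooth morphisms of rigid spaces. Given that the abstract announces a ``perfectoid'' version of Grothendieck's cohomology-and-base-change, I would not be surprised if the authors bypass this classical rigid step altogether, working on a perfectoid pro-\'etale cover of $S$ and descending, which should produce a uniform and arguably cleaner proof than the classical route sketched above.
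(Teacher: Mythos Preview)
Your proposal is correct and follows essentially the same route as the paper: upper semi-continuity of each $h^{ij}$ via a perfect-complex representation of $Rf_{\ast}\Omega^j_{X|S}$ (the paper's \Cref{t:Kiehl-extended} and \Cref{l:upper-semicts}), combined with local constancy of the anti-diagonal sums obtained from degeneration of the absolute Hodge--Tate spectral sequence on fibres and local constancy of the higher direct images in \'etale (equivalently pro-\'etale) cohomology, then concluding via Grauert's theorem. Your final speculation is off, however: the paper does \emph{not} bypass the rigid cohomology-and-base-change step but rather develops it carefully (\Cref{t:cohomology-and-base-change-rigid}, \Cref{t:cohomology-and-base-change-perfectoid}) and uses it exactly as you outline; the perfectoid version is needed not for this theorem but to compute the $E_2$-page of the relative spectral sequence itself.
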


	\Cref{t:deligne-intro} implies that the relative Hodge--Tate sequence is preserved by pullback.  In particular, for any geometric point  $s:\Spa(C,C^+)\to S$, the pullback of the spectral sequence \Cref{eq:main-thm-intro} along $s$ is canonically isomorphic to the absolute Hodge--Tate sequence \eqref{eq:HTss-absolute} of the fibre $f_s: X_s\to \Spa(C,C^+)$ of $f$. 
	This means that we can think of \Cref{t:intro-HT-spectral-seq} as providing ``variations of Hodge--Tate structures'' in smooth proper families of rigid spaces.

	In fact, we will show that \Cref{t:intro-HT-spectral-seq} holds without the assumption that $S$ is reduced whenever one knows a priori that  $R^if_{\ast}\Omega^j_{ X| S}$ is locally free, for example in algebraic situations.
	
	\subsection{Previous results on the relative Hodge--Tate spectral sequence}
	Further to the absolute case \Cref{eq:HTss-absolute} discussed in the beginning, which is the case of $S=\Spa(C)$, the following instances of the relative Hodge--Tate  sequence were previously known:
	\begin{enumerate}
		\item Caraiani--Scholze show \Cref{t:intro-HT-spectral-seq}  in the case when $f$ is the base-change to $K$ of a smooth proper morphism of smooth rigid spaces $f_0: X_0\to  S_0$ over a discretely valued subfield $K_0\subseteq K$ with perfect residue field  \cite[\S2.2]{CaraianiScholze}. These conditions appear because the result is deduced from Scholze's relative de Rham comparison isomorphism. 
		\item In the algebraic case that $f$ is the analytification of a smooth projective morphism $f_0:X_0\to S_0$ of varieties over the $p$-adic subfield $K_0$, the relative Hodge--Tate sequence has recently been constructed by Abbes--Gros \cite[Théorème 6.7.5]{AbbesGros_relativeHodgeTate}, by developing systematically a strategy sketched by Faltings \cite{Faltingsalmostetale} based on what they call the relative Faltings topos. There is a local version of this result due to He \cite[\S12]{he2022cohomological}. 
		\item The earliest work on relative Hodge--Tate structures is due to Hyodo: In \cite{Hyodo-HTimperfect}, he constructs the Hodge--Tate sequence for the Tate module of an abelian variety over a discretely valued extension of $\Q_p$ with imperfect residue field. This is related to the case that $f_0$  in 2.\ is a relative abelian variety.  Let us also mention \cite[(3.6)]{HyodoVariations} about variations of Hodge--Tate structures in algebraic smooth proper families over $K_0$.
		\item Koshikawa--Gaisin \cite[\S7.1]{Gaisin-Koshikawa} have recently constructed the relative Hodge--Tate spectral sequence in the case that $f:X\to S$ arises as the generic fibre of a smooth morphism $\mathfrak X\to \mathfrak S$ of admissible formal schemes over $\O_K$ where $K$ is algebraically closed. These additional assumptions appear because their proof is based on a relative version of the $A_{\inf}$-cohomology from Bhatt--Morrow--Scholze's integral $p$-adic Hodge theory \cite{BMS}. This approach is conceptually related to that of \cite{AbbesGros_relativeHodgeTate}.
	\end{enumerate}
	
	We note that much of the work on the relative Hodge--Tate sequence is fairly recent:
	One reason for the time that passed between  \Cref{eq:HTss-absolute} to \Cref{eq:main-thm-intro} is that already the  formulation of the latter requires more advanced structures like the pro-\'etale site or the relative Faltings topos.
	\subsection{Degeneration}
	In the arithmetic cases treated by Caraiani--Scholze and Abbes--Gros, the degeneration of the Hodge--Tate sequence, \Cref{t:intro-HT-spectral-seq}.3 can be shown using the Galois action, generalising Tate's argument in the absolute case. 
	In our more general setting, a different argument is needed:  Our strategy is to first prove
	part~2 of \Cref{t:intro-HT-spectral-seq}. We deduce this from three main ingredients. The first is pro-\'etale-local constancy of  $R^nf_{\proet\ast}\wh{\Z}_p\tf$   which follows from \cite[Theorem~10.5.1]{ScholzeBerkeleyLectureNotes}, the second is Scholze's Primitive Comparison Theorem \cite[Theorem~1.3]{Scholze_p-adicHodgeForRigid}. 
	The third ingredient is a rigid analytic version of Grauert's Theorem which we show in \Cref{t:cohomology-and-base-change-rigid}. From these three, we first prove \Cref{t:deligne-intro}, and then use this to construct the relative Hodge--Tate spectral sequence. Using the base-change property, the degeneration then follows from the degeneration of \Cref{eq:HTss-absolute} due to Bhatt--Morrow--Scholze.

	\subsection{Splittings}
	Given its degeneration, it is a natural question whether the relative Hodge--Tate sequence is split, like in the absolute case.
	It was already observed by Hyodo that this is not the case  \cite[Theorem~3]{Hyodo-HTimperfect}. A beautiful explanation of this phenomenon can be found in the works of \mbox{Liu--Zhu} \cite[\S2]{LiuZhu_RiemannHilbert} and Abbes--Gros \cite[Corollary 5.6]{AbbesGrosSimpsonII}. Both show  (in their respective setups) that one instead obtains a splitting of the underlying vector bundle of the Higgs bundle associated to $(R^{n}f_{\proet\ast}\wh\Z_p)\otimes_{\wh\Z_p} \wh\O$ under the $p$-adic Simpson correspondence. To illustrate this phenomenon, and to investigate the role that choices of a lift will play in the general setting, we  discuss in \S\ref{s:splittings} relative  Hodge--Tate splittings in the case of relative curves.
	
	\subsection{Cohomology and base-change}
	In broad strokes, our proof of the first part of \Cref{t:intro-HT-spectral-seq} follows Scholze's strategy in the absolute case in \cite[\S3]{ScholzeSurvey}, supplemented by two main ingredients: Firstly, we use local results on ``smoothoid adic spaces''. These are perfectoid families of smooth rigid spaces introduced \cite[\S2]{heuer-sheafified-paCS} for applications in relative $p$-adic Hodge theory like the present article. The second main technical ingredient, which we develop in this article, is a  base-change result for proper morphisms to perfectoid spaces:
	
	Let $K$ be any non-archimedean field of residue characteristic $p$.  Let $C$ be a completed algebraic closure of $K$.
	Let $f:X\to S$ be a proper morphism of rigid spaces over $K$. Let $g:S'\to S$ be a morphism of adic spaces and let 
 $X':=X\times_SS'$. The main technical work of this article is about base-change results for the resulting Cartesian diagram:
	\begin{equation}\label{eq:bc-diag}
		\begin{tikzcd}
			X'\arrow[d,"g'"] \arrow[r,"f'"] & S' \arrow[d,"g"] \\
			X\arrow[r,"f"] &  S
		\end{tikzcd}
	\end{equation} We note that it is not clear in general whether this fibre product exists in the category of adic spaces, as sheafiness  can be an issue. But if $f$ is smooth and $S'$ is perfectoid, then $X'$ is in fact an adic space, namely an instance of the aforementioned smoothoid spaces. In this setting,  we prove the following version of Grothendieck's ``cohomology and base-change'':
	
	\begin{Theorem}\label{t:cohomology-and-base-change-perfectoid-intro}
		Let $F$ be an $S$-flat coherent $\O_X$-module.
		Assume that $S'$ is a rigid space, or that $f$ is smooth and $S'$ is sousperfectoid and $F$ is a vector bundle. Assume that $S$ is reduced and that  for some $i\geq 0$,
		\[S(C)\to \mathbb Z,\quad s\mapsto \dim_{C}H^i( X_s,  F_s)\]
		is locally constant.
		Then the following natural base-change map is an isomorphism:
		\[ g^{\ast}R^if_{\ast} F\to R^if'_{\ast}g'^{\ast} F.\]
	\end{Theorem}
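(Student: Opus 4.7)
The plan is to follow Grothendieck's classical template: reduce to an affinoid base, represent $Rf_*F$ by a perfect complex, verify that its base-change computes $Rf'_*g'^*F$, and invoke the commutative-algebra lemma relating local constancy of fibre cohomology dimensions to local freeness and base-change compatibility. Working locally on $S$, assume $S=\Spa(A,A^+)$ is affinoid. By Kiehl's finiteness theorem applied to the proper morphism $f$ and the $S$-flat coherent sheaf $F$, after possibly shrinking $S$ one obtains a bounded complex $K^\bullet$ of finite projective $A$-modules such that $R\Gamma(X,F)\simeq K^\bullet$ in $D(A)$, and so that the associated complex of $\O_S$-modules represents $Rf_*F$.

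The central task is then to show that for any $g:S'\to S$ as in the theorem, with $B=\O(S')$, the base-change $Rf'_*g'^*F$ is represented by $K^\bullet\otimes_A B$. When $S'$ is a rigid space, this is the standard affinoid base-change statement for proper morphisms, again a consequence of Kiehl's theory. When $S'$ is sousperfectoid and $F$ is a vector bundle, the fibre product $X'$ exists as a smoothoid adic space thanks to the smoothness of $f$, and one establishes the base-change identity by an explicit \v{C}ech-type argument: choose a finite affinoid cover of $X$ by rational subdomains whose \v{C}ech complex computes $R\Gamma(X,F)$, base-change this cover to an analogous cover of $X'$, and check that the induced base-change of the \v{C}ech complex is given by completed tensor product with $B$, which in this setting agrees with $K^\bullet\otimes_AB$ up to quasi-isomorphism. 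The required structural results on coherent cohomology of smoothoid spaces come from \cite{heuer-sheafified-paCS}.

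Once both sides are realised as $K^\bullet$ and $K^\bullet\otimes_AB$, the theorem reduces to the classical statement: for a bounded complex of finite projective modules over a reduced ring $A$, if $s\mapsto \dim_{\kappa(s)}H^i(K^\bullet\otimes_A^L\kappa(s))$ is locally constant on $\Spec A$ for some $i$, then $H^i(K^\bullet)$ is finite projective over $A$ and its formation commutes with arbitrary base change $A\to B$. The geometric-fibre hypothesis in the theorem translates to the required condition on $\Spec A$ via flat base change from classical residue fields to $C$, together with density of classical points in affinoid rigid spaces combined with upper semi-continuity of fibrewise dimensions. Applying the lemma with $B=\O(S')$ yields the desired isomorphism $g^*R^if_*F\isomarrow R^if'_*g'^*F$.

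The principal difficulty is the base-change identity in the sousperfectoid case. Sousperfectoid rings are non-Noetherian, so Kiehl's theorem does not apply to $X'\to S'$ directly, and the base-change identity must instead be extracted from the structure of smoothoid spaces as perfectoid families of smooth rigid spaces. The smoothness of $f$ and the vector-bundle hypothesis on $F$ enter precisely at this point, ensuring that the \v{C}ech computation is compatible with completed tensor products. Once the base-change at the level of the perfect complex is secured, the remainder is essentially a direct translation of Grothendieck's argument.
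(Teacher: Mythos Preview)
Your proposal is correct and follows essentially the same route as the paper: localise to an affinoid base, represent $Rf_\ast F$ by a perfect complex whose formation commutes with base-change to $B=\O(S')$ (the paper's \Cref{t:Kiehl-extended}), and then apply the commutative-algebra lemma that local constancy of fibre dimensions over a reduced base forces $H^i(K^\bullet)$ to be projective with $H^i(K^\bullet\otimes_A B)=H^i(K^\bullet)\otimes_A B$ (the paper's \Cref{cl:cbc-constant-fibre-dimension}). The one point where the paper adds substance beyond your sketch is the construction of the perfect complex with its base-change property: rather than working directly with a rational affinoid cover of $X$, the paper passes to a formal model $\mathfrak f:\mathfrak X\to\mathfrak S$, uses Raynaud--Gruson/Abbes flattening to make the integral \v{C}ech complex flat over the base, invokes the Proper Mapping Theorem for formal schemes for finiteness, and then checks via this flatness that the quasi-isomorphism $P^\bullet\to C^\bullet$ survives the completed tensor product $-\hotimes_R S$; this is precisely what makes the step ``agrees with $K^\bullet\otimes_A B$ up to quasi-isomorphism'' go through, and is where the $S$-flatness of $F$ is actually consumed.
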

	In fact, we also prove some other cases of ``cohomology and base-change'', see \Cref{t:cohomology-and-base-change-perfectoid}.
	
	We are also interested in  \Cref{t:cohomology-and-base-change-perfectoid-intro} for independent applications to non-abelian Hodge theory: In \cite{HX}, we study the $p$-adic Simpson correspondence for curves in terms of moduli v-stacks. By ``abelianization'', it is possible in this context to reduce certain aspects of non-abelian Hodge theory of a smooth proper curve $X$ to the relative abelian Hodge theory of its spectral cover $f:\mathcal X\to \mathcal A$, a non-smooth relative curve over the Hitchin base $\mathcal A$.
	
	The spectral cover is smooth over a dense open subspace of $\mathcal A$. 
	Understanding how a splitting of the Hodge--Tate sequence of $X$ induces a splitting of the relative Hodge--Tate sequence of the spectral curve $f$ over the regular locus is our main motivation for \S\ref{s:splittings}.

	\subsection*{Acknowledgements}
	 Parts of this article were motivated by preparations for \cite{HX}, and we thank Daxin Xu for related discussions. We moreover thank Peter Scholze and Mingjia Zhang for  helpful conversations.
	This project was funded by the Deutsche Forschungsgemeinschaft (DFG, German Research Foundation) -- Project-ID 444845124 -- TRR 326.
	\section{Setup and Recollections}
	Let $K$ be a non-archimedean field of residue characteristic $p> 0$. Let $K^+\subseteq K$ be a ring of integral elements. By a rigid space over $(K,K^+)$ we mean an adic space locally of topologically finite type over $\Spa(K,K^+)$. We call this a rigid space over $K$ when $K^+$ is clear from the context. By \cite[(1.1.11)]{huber2013etale},  there is an equivalence of categories $r$ between quasi-separated rigid spaces in the sense of Tate to quasi-separated rigid spaces over $(K,\O_K)$ in the above sense. It is known that a morphism $f:X\to S$ of rigid spaces is proper in the sense of Kiehl if and only if $r(f)$ is proper \cite[Theorem~3.3.12]{Lutkebohmert_RigidCurves}: This follows from results of Temkin \cite[Corollaries~4.4-4.5]{Temkin_local-properties}, see also \cite[p.5]{HWZ} for some more details on this fact.
	
	\medskip
	
	From $\S4$ on, we will assume that $K$ is a perfectoid field over $\Q_p$.
	For any analytic adic space $X$ over $K$, Scholze constructs in \cite[\S15]{Sch18} an associated diamond $X^\dmd$ over $\Spa(K)$. We can identify this with the sheaf on the category of perfectoid spaces $Y$ over $K$ that sends $Y$ to the morphisms of adic spaces $Y\to X$.
	The diamond $X^\diamondsuit$ is locally spatial, in particular it has a locally spectral topological space $|X^\diamondsuit|$ associated to it, which can be canonically identified with the topological space underlying $X$. We shall also denote this by $X_\an$ when we consider it as a site. We denote the sheaf cohomology of this site by $H^n_\an$ when we want to emphasize the topology, but often drop the $-_\an$ from notation when it is clear from context.
	
	Second, we have an \'etale site $X^\diamondsuit_\et$ which is canonically identified with $X_\et$ by \cite[Lemma~15.6]{Sch18}. For these reasons, we will freely identify $X$ with $X^\diamondsuit$ when dealing with analytic or \'etale cohomology. We denote by $X_\qproet$ and $X_v$ the quasi-pro-\'etale site, respectively the v-site of $X^\dmd$ \cite[\S14]{Sch18}. When $X$ is locally Noetherian, we denote by $X_\proet$ Scholze's flattened pro-\'etale site from \cite[\S3]{Scholze_p-adicHodgeForRigid}. Each of these are ringed sites in a natural way: Here we endow $X_\proet$ with the completed structure sheaf $\wh \O$, while $X_\qproet$ and $X_v$ carry the structure sheaf given by sending perfectoid spaces $Y\to X$ to $\O(Y)$.  We will denote by 
	\[ \nu:X_\proet \to X_\an,\quad \omega:X_\proet \to X_\et,\quad \eta:X_v\to X_\et,\quad \lambda:X_v\to X_\an,\quad \mu:Y_v\to Y_\proet\]
	the natural morphisms of ringed sites. We will use the following definitions from \cite[\S2]{heuer-sheafified-paCS}:
	
	\begin{Definition}
		We call a morphism $f:X\to Y$ of affinoid analytic adic spaces over $K$ standard-\'etale if it is the composition of rational open immersions and finite \'etale maps.
	\end{Definition}
	\begin{Definition}
		We call an adic space $X$ over $K$ smoothoid if it has a cover by open subspaces $U\subseteq X$ that admit a toric chart: This is a standard-\'etale morphism
		$U\to \mathbb T^d\times T$
		where $\mathbb T^d$ is the rigid torus of some dimension $d$ over $K$ and $T$ is an affinoid perfectoid space over $K$.
	\end{Definition}
	\begin{Example}\label{ex:smoothoid-base-change}
		Let $f:X_0\to S_0$ be a smooth morphism of rigid spaces and let $g:S\to S_0$ be a morphism from a perfectoid space. Then $X:=X_0\times_{S_0}S$ is a smoothoid space.
	\end{Example}
	
	We now recall some facts about smoothoid spaces over $K$ and refer to \cite[\S2]{heuer-sheafified-paCS} for details: Any smoothoid space $X$ is sousperfectoid in the sense of \cite{HK}\cite[\S6.3]{ScholzeBerkeleyLectureNotes}. In particular, smoothoid spaces are sheafy. For the associated diamond $X^\diamondsuit$, the identification $X^\diamondsuit_\et=X_\et$ also identifies the structure sheaves $\O_{X^\diamondsuit_\et}=\lambda_{\ast}\O_{X_v}=\O_{X_\et}$, hence it will be harmless to switch back and forth between considering $X$ as an adic space or as a diamond.
	\begin{Definition}[{\cite[Example~4.2]{BMS}}]\label{d:BKF-twist}
		For $j\in \Z$, we define the Breuil--Kisin--Fargues twist \[K\{j\}:=(\ker \theta)^j/(\ker\theta)^{j+1}\tf\] where $\theta:A_{\inf}(K)\to \O_K$ is Fontaine's map. Then $K\{j\}$ is a $K$-vector space of dimension one. If $K$ contains all $p$-power roots of unity, there is a canonical identification $\textstyle K\{1\}=K(1):=K\otimes_{\Z_p}\varprojlim_{n\in\N} \mu_{p^n}(K).$
		For any $K$-vector space $V$, we set $V\{j\}:=V\otimes_KK\{j\}$.
	\end{Definition}
	We note that one can always choose a generator of $\ker \theta$ to trivialise these twists, but it is more natural to keep them, e.g.\ to keep track of Galois actions in arithmetic situations.

	\begin{Proposition}[{\cite[Proposition~2.9]{heuer-sheafified-paCS}}]\label{p:prop28}
		Let $X$ be a smoothoid adic space and recall that we denote by $\lambda: X_v\to X_\an$, $\nu:X_\proet\to X_\an$ the natural morphisms of sites. Let $n\in \N$. Then $R^n\lambda_{\ast}\O=R^n\nu_{\ast}\wh\O$ is a vector bundle on $X_\an$. In the setting of \Cref{ex:smoothoid-base-change}, there is a natural isomorphism
		\[R^n\lambda_{\ast}\O=g^\ast\Omega^n_{X_0|S_0}\{-n\}.\]
	\end{Proposition}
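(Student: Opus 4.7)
The plan is to verify the statement locally on a toric chart and then compute both the v-cohomology and the pro-\'etale cohomology explicitly via a perfectoid Galois cover, reducing the result to the classical Faltings extension and the Koszul-type cohomology of $\Z_p^d$.

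First I would work locally and assume that $X$ itself admits a toric chart $\phi:X\to \mathbb{T}^d\times T$, where $T$ is affinoid perfectoid and $\mathbb T^d=\Spa(K\langle T_1^{\pm 1},\dots,T_d^{\pm 1}\rangle)$. Over such a chart, one has the standard perfectoid pro-finite-\'etale tower
\[
\widetilde X:=X\times_{\mathbb T^d}\mathbb T^d_\infty\to X,
\]
obtained by adjoining all $p$-power roots of the coordinates $T_i$; since standard-\'etale morphisms are preserved under this construction and $T$ is already perfectoid, $\widetilde X$ is affinoid perfectoid, the cover is a pro-\'etale (and v-) cover, and it is pro-Galois with Galois group $\Gamma\cong\Z_p^d$. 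This makes $\widetilde X\to X$ a cover both in $X_v$ and in $X_\proet$, so the corresponding \v{C}ech-to-derived functor spectral sequence identifies $R^n\lambda_{\ast}\O$ and $R^n\nu_{\ast}\wh\O$ on this cover with the continuous cohomology $H^n_\cts(\Gamma,\O(\widetilde X))$, provided one checks vanishing of the higher cohomology of $\O$ on each step of the tower; this vanishing in turn follows from the perfectoid case via the sousperfectoid structure of smoothoid spaces, cf.\ the discussion of v-cohomology in the cited \cite[\S2]{heuer-sheafified-paCS}.

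Next I would compute $H^n_\cts(\Gamma,\O(\widetilde X))$. Writing $A=\O(X)$ and $A_\infty=\O(\widetilde X)$, the action of $\Gamma=\Z_p^d$ is generated by $\gamma_i:T_i^{1/p^n}\mapsto \zeta_{p^n}T_i^{1/p^n}$, so $A_\infty$ decomposes as an orthogonal sum indexed by $\underline a\in \Z[\tfrac 1 p]^d/\Z^d$, and the Koszul complex computing the continuous cohomology is a direct sum of Koszul complexes with entries $(\zeta^{a_i}-1)$. The weight-zero part gives $A$ in degree $0$; the nonzero weight parts, after an almost computation and $p$-adic completion, produce exactly $\bigwedge^n(A^d)\{-n\}$, the Breuil--Kisin--Fargues twist appearing because each generator of $\wedge^n A^d$ is multiplied by a factor that trivialises $K\{1\}$ once one chooses compatible sequences of roots of unity. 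This gives a natural identification
\[
R^n\lambda_{\ast}\O=\Omega^n_{X|T}\{-n\}
\]
on the chart, where $\Omega^n_{X|T}=\Omega^n_{\mathbb T^d|K}\otimes_{\O_{\mathbb T^d}}\O_X$ via $\phi$; in the base-change situation of \Cref{ex:smoothoid-base-change} this $\Omega^n_{X|T}$ is canonically $g^{\ast}\Omega^n_{X_0|S_0}$ because the toric chart refines a toric chart of $X_0\to S_0$ to which one simply adjoins the perfectoid factor.

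Finally I would glue: the local identification must be shown to be independent of the toric chart and functorial for morphisms of charts, so that it descends from the v-cover of $X$ by toric charts to give a global isomorphism on $X_\an$; the standard way to carry this out is to exhibit the isomorphism as the top wedge power of a canonical ``Faltings extension'' $0\to\O\to \mathcal E\to \Omega^1_{X|T}\{-1\}\to 0$ built intrinsically from the geometric logarithm on $\wh\O^+$, so that its construction does not depend on coordinate choices. The identification of $R^n\lambda_{\ast}\O$ with $R^n\nu_{\ast}\wh\O$ is then automatic because both sides agree on the common perfectoid cover, and $\mu_\ast$ preserves the relevant sheaves on a cover by affinoid perfectoid opens. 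The main obstacle is this last gluing/naturality step: one has to verify that the Koszul computation is canonical (independent of the choice of roots of unity and of coordinates) and compatible with restriction to smaller toric charts; this is what forces the appearance of the BKF twist $\{-n\}$ rather than an untwisted $\Omega^n$, and it is the reason the proof is best phrased through the intrinsic Faltings extension rather than through bare Koszul computations.
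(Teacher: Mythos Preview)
The present paper does not prove this proposition at all: it is quoted from \cite[Proposition~2.9]{heuer-sheafified-paCS} as an input, with the surrounding sentence ``We now recall some facts about smoothoid spaces over $K$ and refer to \cite[\S2]{heuer-sheafified-paCS} for details''. There is therefore no proof here to compare your proposal against.

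That said, your outline follows the standard strategy used in the cited reference (which in turn generalises Scholze's absolute computation): localise to a toric chart, pass to the affinoid perfectoid $\Z_p^d$-torsor $\widetilde X\to X$ obtained by adjoining $p$-power roots of the torus coordinates, compute $R^n\lambda_\ast\O$ and $R^n\nu_\ast\wh\O$ as continuous group cohomology via the associated Koszul complex, and globalise using the intrinsic Faltings extension so that the identification is coordinate-free. One point in your sketch is reversed, however: in the character decomposition of $A_\infty$ it is the \emph{weight-zero} summand $A$ (on which $\Gamma$ acts trivially) whose Koszul cohomology produces $\bigwedge^n A^d$ in degree $n$, while the nonzero-weight summands have Koszul differentials with entries $\zeta^{a_i}-1\neq 0$ and hence contribute only almost-zero (bounded $p$-torsion) cohomology that disappears after inverting $p$. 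The Breuil--Kisin--Fargues twist does not arise from the nonzero weights; it enters through the canonical identification of $H^1_{\cts}(\Z_p,K)$ with $K\{-1\}$ via the logarithm, which is exactly what makes the Faltings-extension formulation intrinsic. With this correction your architecture is the right one.
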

	\begin{Definition}\label{d:def-wtOm}
		We denote the vector bundle $R^n\lambda_{\ast}\O$ on $X_\an$ from \Cref{p:prop28} by $\wtOm^n_{X}$.
	\end{Definition}

	\section{Cohomology and base change in rigid geometry}\label{s:cohom-BC}
	Throughout this section, we allow $K$ to be any non-archimedean field over $\Z_p$. Let $\varpi\in \O_K$ be a pseudo-uniformiser. The aim of this section is to prove a non-archimedean analogue of Grothendieck's well-known ``cohomology and base change'' in rigid and perfectoid geometry over $K$. More precisely, we prove analogues of three closely related algebraic statements, all of which are sometimes referred to as ``cohomology and base-change''.

	\subsection{Base change in Kiehl's Theorem}
	Our first, most basic version of ``cohomology and base-change'' is a non-archimedean analogue  of the following result due to Grothendieck:
	\begin{Proposition}[{\cite[07VJ]{StacksProject}}]\label{p:base-change-proper-map-classical}
		Let $f:X\to S$ be a proper morphism of Noetherian schemes with $S=\Spec(A)$ and let $F$ be an $S$-flat coherent $\O_X$-module. Then there is a bounded complex of finite projective $A$-modules $P^\bullet$ such that the following holds: For any morphism $g:S'=\Spec(B)\to \Spec(A)$, 	let $g':X\times_SS'\to X$ be the base-change of $g$, then
		\[ R\Gamma(X\times_{S}S',g'^\ast F)=P^\bullet\otimes_AB.\]
		
	\end{Proposition}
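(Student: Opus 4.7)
The plan is to model $Rf_*F$ by an explicit flat complex coming from \v{C}ech cohomology, and then clean it up to a bounded complex of finite projective $A$-modules using standard homological algebra. The key input from algebraic geometry is twofold: first, $F$ being $S$-flat will translate into flatness of every term of the \v{C}ech complex over $A$; second, Grothendieck's coherence theorem guarantees that the cohomology $H^i(X,F)$ is finitely generated over $A$, which is what will let us replace large flat modules by finite projective ones.

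Concretely, I would fix a finite affine open cover $\mathcal{U}=\{U_0,\dots,U_r\}$ of $X$ over $S$; such a cover exists because $f$ is of finite type and $X$ is Noetherian. Let $K^\bullet:=\check{C}^\bullet(\mathcal{U},F)$ be the alternating \v{C}ech complex, which is bounded (concentrated in degrees $[0,r]$). Because each intersection $U_{i_0\cdots i_p}$ is affine over $S$ and $F$ is $\O_X$-coherent, Serre vanishing on affines gives a natural identification $R\Gamma(X,F)=K^\bullet$ in $D(A)$. Crucially, each term $K^p=\bigoplus\Gamma(U_{i_0\cdots i_p},F)$ is $A$-flat, since $F$ is $S$-flat and each affine piece is flat over itself. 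Base change is automatic at this level: for any $A$-algebra $B$, the pulled-back cover of $X_B$ is affine with $\check{C}^\bullet(\mathcal{U}_B,g'^\ast F)=K^\bullet\otimes_A B$, and flatness of each $K^p$ ensures this underived tensor product computes $R\Gamma(X_B,g'^\ast F)$. So $K^\bullet$ already satisfies the desired base-change property; it just needs to be made perfect.

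The remaining step is the purely homological one: given a bounded complex $K^\bullet$ of flat $A$-modules whose cohomology groups $H^i(K^\bullet)=H^i(X,F)$ are finitely generated over $A$ (Grothendieck's coherence theorem for proper morphisms of Noetherian schemes), one produces a quasi-isomorphism $P^\bullet\to K^\bullet$ from a bounded complex $P^\bullet$ of finite projective $A$-modules. This is done by descending induction: starting from the top degree, pick a surjection from a finite free module onto $H^{\mathrm{top}}(K^\bullet)$, lift using flatness to build a finite free module mapping into $K^{\mathrm{top}}$, pass to the kernel one degree below, and iterate; at the bottom one truncates, using flatness of all terms involved to check that the truncation retains correct cohomology. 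Since $P^\bullet$ consists of projectives and $K^\bullet$ of flats, the quasi-isomorphism $P^\bullet\otimes_A B\to K^\bullet\otimes_A B$ is preserved by any base change $A\to B$, which upgrades the conclusion from $K^\bullet$ to $P^\bullet$.

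The main obstacle is the last paragraph's homological construction: one has to organise the induction so that at each stage the chosen finite free modules lift consistently through the differentials of $K^\bullet$, and so that the flatness used to preserve quasi-isomorphism under arbitrary base change survives the truncation step. Once this perfect-complex replacement is in hand, the base-change statement is formal. (In the paper at hand, this classical result is used only as the model for its rigid and perfectoid analogues, and one may simply invoke \cite[Tag~07VJ]{StacksProject} for the proof.)
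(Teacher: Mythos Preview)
The paper does not give its own proof of this proposition: it is quoted as a classical result with the citation to \cite[07VJ]{StacksProject}, and serves only as the algebraic template for the non-archimedean analogue \Cref{t:Kiehl-extended}. Your outline is correct and is exactly the standard argument (Mumford, \S II.5); in particular, your homological replacement step is precisely what the paper later isolates as \Cref{l:perfect-complex} when proving the rigid version, and your final observation that the quasi-isomorphism $P^\bullet\to K^\bullet$ survives $-\otimes_A B$ because the cone is bounded, acyclic, and termwise flat is the same mechanism used there. One small clarification: the reason $K^\bullet\otimes_A B$ computes $R\Gamma(X_B,g'^\ast F)$ is simply that it \emph{is} the \v{C}ech complex for the pulled-back affine cover (properness gives separatedness, so intersections stay affine); the flatness of the $K^p$ is not needed for that identification, but is essential in the last step to propagate the quasi-isomorphism to arbitrary $B$.
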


	Our non-archimedean analogue is the following result, which may also be described as an extension of Kiehl's Theorem \cite[Theorem~3.3]{KiehlProperMapping} explaining its base-change properties:
	
	\begin{Proposition}\label{t:Kiehl-extended}
		Let $f: X\to  S$ be a proper morphism of rigid spaces over $K$. Let $ F$ be a coherent $\O_{ X}$-module that is $ S$-flat. Then there is a cover of $ S$ by affine open subspaces $U=\Spa(A)$ and for each $U$ a bounded complex of finite projective $A$-modules $P^\bullet$, such that the following holds: Let $g:T=\Spa(B)\to U$ be a morphism of adic spaces, let
		\[ \begin{tikzcd}
			X\times_{{S}}T \arrow[d,"g'"] \arrow[r,"f'"] & T \arrow[d,"g"] \\
			X\times_{{S}}U \arrow[r,"f"] & U,
		\end{tikzcd}
		\]
		be the base change diagram of $f$ along $g$,
		and assume one of the following:
		\begin{enumerate}
			\item $T$ is a rigid space, or
			\item $T$ is a sousperfectoid adic space,  $f$ is smooth and $ F$ is a vector bundle on $ X$.
		\end{enumerate}
		Then $X\times_ST$ is a sheafy adic space and we have a natural quasi-isomorphism
			\[ R\Gamma( X\times_{{S}}T,g'^{\ast} F)=P^\bullet \otimes_AB.\]
		If $f$ is of pure dimension $d$, then $P^\bullet$ can be chosen to be concentrated in $[0,d]$.
	\end{Proposition}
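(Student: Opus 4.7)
The plan is to adapt Grothendieck's classical argument for \Cref{p:base-change-proper-map-classical} (see \cite[07VJ]{StacksProject}) to the non-archimedean setting, replacing algebraic coherence by Kiehl's theorem \cite[Theorem 3.3]{KiehlProperMapping} and ordinary tensor products by completed ones. First I would reduce to the case $U = \Spa(A)$ affinoid, and pick a finite affinoid cover $\mathcal V = \{V_i\}$ of $X_U := X \times_S U$. Form the Čech complex $C^\bullet := \check{C}^\bullet(\mathcal V, F)$, which computes $H^\bullet(X_U, F)$. Each term $C^k$ is an $A$-flat module, being a finite direct sum of finite $\O_X(V_{i_0 \cdots i_k})$-modules $F(V_{i_0 \cdots i_k})$ that inherit $A$-flatness from the $S$-flatness of $F$. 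Kiehl's theorem ensures that $H^\bullet(C^\bullet)$ is finitely generated over $A$, and a cohomological-dimension bound confines it to $[0,d]$ when $f$ has pure dimension $d$.

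Next, I would invoke the standard homological-algebra lemma (e.g.\ \cite[0B5X]{StacksProject}): a bounded complex of flat $A$-modules with finitely generated cohomology concentrated in $[0,d]$ admits a quasi-isomorphism $P^\bullet \to C^\bullet$ with $P^\bullet$ a bounded complex of finite projective $A$-modules concentrated in $[0,d]$. After shrinking $U$ so that the projective modules become free, this yields the required $P^\bullet$. For the base-change compatibility, given $g: T = \Spa(B) \to U$, the pulled-back cover $\{V_i \times_U T\}$ is an affinoid cover of $X_U \times_U T$: in case (1) by the classical theory of rigid fibre products; in case (2) by \Cref{ex:smoothoid-base-change}, which gives that $X_U \times_U T$ is a smoothoid adic space, hence sheafy. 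In both cases $V_i \times_U T = \Spa(\O_X(V_i) \hotimes_A B)$, so the Čech complex of $g'^{\ast} F$ is $C^\bullet \hotimes_A B$, and it computes $R\Gamma(X \times_S T, g'^{\ast} F)$. Since each $P^i$ is finite projective, $P^\bullet \otimes_A B = P^\bullet \hotimes_A B$, and the identity in the statement then reduces to a quasi-isomorphism $P^\bullet \hotimes_A B \to C^\bullet \hotimes_A B$ induced by $P^\bullet \to C^\bullet$.

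The main obstacle is precisely this last quasi-isomorphism: showing that the completed tensor product $- \hotimes_A B$ preserves the quasi-isomorphism $P^\bullet \to C^\bullet$ of flat $A$-complexes. This is a topological-flatness statement for the Čech terms $F(V_{i_0 \cdots i_k})$. For rigid $T$ (case 1) it follows from the classical theory of Banach modules over affinoid algebras, where completed tensor products of flat finite modules behave as expected. For sousperfectoid $T$ with $f$ smooth and $F$ a vector bundle (case 2), the smoothness and vector-bundle hypotheses become essential: via toric charts and the smoothoid-space techniques of \cite[\S2]{heuer-sheafified-paCS}, one can reduce to explicit local models in which the behaviour of the completed tensor product against a sousperfectoid $A$-algebra can be analyzed directly. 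This reduction is the step that explains why case (2) requires the stronger hypotheses on $f$ and $F$.
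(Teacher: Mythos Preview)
Your overall architecture matches the paper's: form a \v{C}ech complex, approximate by a perfect complex $P^\bullet$ via Mumford's lemma, then check that the approximation survives base change. The decisive difference is that you work on the rigid side over the affinoid algebra $A$, whereas the paper first passes to a proper formal model $\mathfrak f:\mathfrak X\to\mathfrak S=\Spf(R)$ over $\O_K$ (Raynaud, Temkin), chooses a coherent formal model $\mathfrak F$ of $F$, flattens it over $\mathfrak S$ via Abbes's version of Raynaud--Gruson, and only then runs the \v{C}ech argument at this integral level. This is not a stylistic choice: it is precisely what makes your ``main obstacle'' tractable. With $C^\bullet$ built from flat $R$-modules, the mapping cone of $P^\bullet\to C^\bullet$ is a bounded exact complex of flat $R$-modules, and the paper proves in an appendix (\Cref{p:commute-hotimes-and-H}) that $-\hat\otimes_R S$ preserves its acyclicity, by an argument with the $\pi$-adic filtration. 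By contrast, your $C^k$ are infinite-dimensional Banach $A$-modules, and there is no general statement that $-\hat\otimes_A B$ preserves exactness of bounded complexes of flat Banach $A$-modules: algebraic exactness does not force the differentials to be strict, so completion can destroy it. Your appeals to ``the classical theory of Banach modules'' and to ``explicit local models via toric charts'' do not supply such a result, and this is exactly the step where the non-archimedean argument diverges from the scheme-theoretic one.

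There is a secondary gap in case (2): identifying the base-changed \v{C}ech complex with $C^\bullet\hat\otimes_A B$ requires $\O(V_i\times_U T)=\O_X(V_i)\hat\otimes_A B$ for sousperfectoid $T$, which is not automatic for an arbitrary smooth affinoid $V_i\to U$. The paper handles this by arranging, after a further admissible blow-up of the formal model, that each member of the cover has \emph{standard-smooth} generic fibre over $U$ (a composite of rational opens, finite \'etale maps, and relative tori), and then verifies the identity separately for each of these three building blocks (\Cref{cl:check-computes-base-change}). Your sketch assumes this identification without setting up the cover to make it checkable.
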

	\begin{Remark}\label{r:setting-3-cohom-bc}
		When $S$ is affine, $F$ is a vector bundle, and $f: X= Z\times  S\to S$ is the projection for a smooth proper rigid space $ Z$ over $K$, then we will see that we can take $U= S$.
	\end{Remark}
	
	Our proof in the rigid case will essentially follow Grothendieck's strategy in the algebraic case, supplemented by the combination of various well-known deep results from the theory of formal models due to Raynaud--Gruson, Bosch, L\"utkebohmert, Temkin and Abbes.
	\begin{Remark}
		\begin{enumerate}
			\item In comparison to the algebraic setting of \Cref{p:base-change-proper-map-classical}, the main difficulty in the non-archimedean setting of \Cref{t:Kiehl-extended}  is that the base-change involves the completed tensor product, which is slightly more difficult to control.
			\item The additional technical assumptions in part 2 are just to guarantee that $X\times_ST$ is still a sheafy adic space, and that $g'^\ast F$ is acyclic on affinoids. More generally, it would be sufficient to assume that $g'^{\ast} F$ is stably pseudocoherent. In fact, we could then even allow $K$ to be an equicharacteristic $0$ non-archimedean field.
			\item Beyond such conditions, it should be possible to prove \Cref{t:Kiehl-extended} in much greater generality in Clausen--Scholze's formalism of ``Analytic Geometry'' \cite{AnalyticGeometry}.
			\item \Cref{p:base-change-proper-map-classical} is often phrased in terms of the complex $Rf'_{\ast}g'^{\ast} F$. The above description is more useful in our setup because it avoids any results about acyclicity of coherent sheaves on affinoid perfectoid spaces to deduce from this the individual cohomology groups. But in the other direction, the following is immediate upon sheafification:
		\end{enumerate}
	\end{Remark}
	\begin{Corollary}\label{c:vanishing-in-degree>dim}
		In the situation of \Cref{t:Kiehl-extended}, there is a natural quasi-isomorphism \[Rf'_{\ast}g'^{\ast} F=g^{\ast}\wt{P^\bullet}\] where $\wt P^\bullet$ denotes the perfect complex of $\O_U$-modules associated to the complex of $A$-modules $P^\bullet$. In particular, $Rf'_{\ast}g'^{\ast} F$ is a perfect complex of $\O_{T}$-modules concentrated in degree $[0,d]$.
	\end{Corollary}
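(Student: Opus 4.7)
The plan is to deduce this by sheafifying Theorem \ref{t:Kiehl-extended} in the variable $T$. The claim is local on $S$, so I fix an affinoid $U = \Spa(A)$ and a bounded complex $P^\bullet$ of finite projective $A$-modules as supplied by the theorem, concentrated in degrees $[0,d]$ when $f$ is pure of dimension $d$.

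For any affinoid $T = \Spa(B) \to U$ satisfying the hypotheses of Theorem \ref{t:Kiehl-extended}, the complex $R\Gamma(T, Rf'_\ast g'^\ast F)$ computes $R\Gamma(X\times_U T, g'^\ast F)$, which the theorem identifies with $P^\bullet \otimes_A B$. On the other hand, since each $P^i$ is a direct summand of a finite free $A$-module, the pullback $g^\ast \widetilde{P^\bullet}$ is a complex of vector bundles on $T$ whose global sections are precisely $P^\bullet \otimes_A B$; no completed-tensor-product subtlety arises, because for finite projective modules the ordinary and completed tensor products coincide. Thus $Rf'_\ast g'^\ast F$ and $g^\ast \widetilde{P^\bullet}$ have the same sections on a basis of affinoid opens of $U$.

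To promote this sectionwise agreement into an actual quasi-isomorphism of complexes of sheaves, I need the identification in Theorem \ref{t:Kiehl-extended} to be natural in $T$: for an open immersion $T'=\Spa(B') \hookrightarrow T=\Spa(B)$ over $U$, the restriction map on the cohomology side must correspond to the canonical base-change map $P^\bullet \otimes_A B \to P^\bullet \otimes_A B'$ on the algebraic side. This compatibility should be intrinsic to the construction of $P^\bullet$ (presumably through a \v{C}ech-type resolution on $X$ pulled back along $g$), and it is the only step that is not purely formal; I view it as the main, though still modest, obstacle.

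Granting this naturality, sheafification on a basis of affinoids of $U$ yields the desired quasi-isomorphism $Rf'_\ast g'^\ast F \simeq g^\ast \widetilde{P^\bullet}$. The assertion that the right-hand side is a perfect complex of $\O_T$-modules is immediate from the definition of $\widetilde{P^\bullet}$ as the complex of vector bundles associated to a bounded complex of finite projective modules, and the amplitude bound $[0,d]$ is inherited directly from the corresponding bound on $P^\bullet$ supplied by Theorem \ref{t:Kiehl-extended}.
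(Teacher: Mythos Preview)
Your argument is correct and follows the same approach as the paper's (one-sentence) proof, which simply remarks that the statement is local and follows immediately from sheafifying \Cref{t:Kiehl-extended}. Two minor comments: you write ``a basis of affinoid opens of $U$'' where you mean affinoid opens of the fixed $T$ (mapping into $U$); and the naturality you flag is indeed built into the proof of \Cref{t:Kiehl-extended}, since the quasi-isomorphism $P^\bullet \to C^\bullet$ is chosen once over $U$ and one then applies $-\hotimes_R S$ functorially for varying $T=\Spa(B)$.
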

	\begin{proof}
		The statement is local on $S$ and thus follows immediately from \Cref{t:Kiehl-extended}.
	\end{proof}
	\begin{proof}[Proof of \cref{t:Kiehl-extended}]
		That $X\times_ST$ is sheafy is clear when $T$ is rigid. In case 2, it follows from \cite[Propositions 6.3.3 and 6.3.4]{ScholzeBerkeleyLectureNotes} that $X\times_ST$ is again sousperfectoid, hence sheafy.
		
		To construct $P^\bullet$, we first reduce to the case that $K^+=\O_K$: Consider the base-change of the diagram along $\Spa(K,\O_K)\hookrightarrow \Spa(K,K^+)$. For any affinoid rigid or smoothoid adic space over $\Spa(K,K^+)$, the pullback to $\Spa(K,\O_K)$ has the same global sections $\O$. In particular, this base change does not change sections of coherent sheaves over affinoid spaces. Second, as the statement is local, we can assume that $S$ is affinoid, so that $X$ is quasi-compact. We may thus assume without loss of generality that $f$ is a morphism of classical rigid spaces.
		
		In this situation, we begin by following the strategy of \cite[Theorem~2.7]{Lutkebohmer-FARAG} to prove Kiehl's Theorem (see also the remarks after \cite[Theorem 3.3.12]{Lutkebohmert_RigidCurves}):

		By Raynaud's Theorem on the existence of formal models \cite[ Corollary 5.9]{BLR-II}, $f$ admits a formal model $\mathfrak f:\mathfrak X\to \mathfrak S$. By \cite[Corollaries~4.4 and 4.5]{Temkin_local-properties}, this is automatically proper. By \cite[Theorem~2.7]{Lutkebohmer-FARAG}\cite[Proposition~4.8.18.(ii)]{AbbesEGR}, the $\O_X$-module $ F$ admits a coherent formal model $\mathfrak F$ on $\mathfrak X$. Due to the flattening results of Raynaud--Gruson, in the version for formal schemes of topologically finite presentation over $\O_K$ due to Abbes \cite[Th\'eor\`eme~5.8.1]{AbbesEGR}, we can after an admissible blow-up assume that $\mathfrak F$ is flat over $\mathfrak S$.
		
		As the statement is local, we can localise on $\mathfrak S$ and assume that $\mathfrak S=\Spf(R)$ is affine. In the setting of \Cref{r:setting-3-cohom-bc}, we can in the beginning choose $\mathfrak S$ affine and let $\mathfrak X:=\mathfrak S\times \mathfrak Z$ where $\mathfrak Z$ is a formal model of $ Z$. As $\mathfrak X\to \mathfrak S$ is then flat, it suffices to flatten $\mathfrak F$ over $\mathfrak X$ so that we can ensure that $\mathfrak S$ remains affine.
		Localising on $T$, we can then assume that $T$ is affinoid rigid, or affinoid perfectoid, so $g$ has an affine formal model $\Spf(S)\to \Spf(R)$. We then set $U:=\mathfrak S_{\eta}$.
		
		Let now $\mathfrak U$ be an affine open cover of $\mathfrak X$. If $f$ is smooth, we may after passing to a further admissible blow-up of $\mathfrak S$ assume that the generic fibre of $\mathfrak U\to \mathfrak S$ is standard smooth, i.e.\ a composition of rational localisations, finite \'etale morphisms, and relative tori. In the setting of \Cref{r:setting-3-cohom-bc}, we can arrange this by just blowing up a cover of $\mathfrak Z$, so again $\mathfrak S$ remains affine.
		
		Since $\mathfrak X\to \mathfrak S$ is separated, any intersection of opens in $\mathfrak U$ is again affine.
		Consequently, the \v{C}ech complex $ C^\bullet:=\check{C}^\bullet(\mathfrak U,\mathfrak F)$ computes $R\Gamma(\mathfrak X,\mathfrak F)$. We wish to compute the cohomology of $g'^{\ast} F$ in terms of the base-change of this \v{C}ech complex. For this we first check:
		\begin{Claim}
			$\check{C}^\bullet(\mathfrak U_\eta\times_UT, g'^\ast F)$ computes $R\Gamma( X\times_{ S}T,g'^\ast F)$
		\end{Claim}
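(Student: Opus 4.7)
The plan is to use the standard Čech-to-derived functor comparison: if $g'^\ast F$ is acyclic on each member of the cover $\mathfrak U_\eta \times_U T$ and on all finite intersections, then the Čech complex computes $R\Gamma$. So there are really two things to verify: first, that each member of $\mathfrak U_\eta \times_U T$ and each of its finite intersections is a reasonable affinoid adic space of the type where acyclicity is known; and second, that $g'^\ast F$ is indeed acyclic on these.

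For the affinoid check, I would use that $\mathfrak f: \mathfrak X \to \mathfrak S$ is separated (since proper), so any finite intersection of members of the affine cover $\mathfrak U$ is again affine. Taking generic fibres, all finite intersections of members of $\mathfrak U_\eta$ are affinoid over $U$. Base-changing along $g:T \to U$ preserves this: in case (1), the base change of an affinoid rigid space over $U$ along $T \to U$ is an affinoid rigid space, and in case (2), \cref{ex:smoothoid-base-change} together with the fact that we arranged $\mathfrak U_\eta \to U$ to be standard smooth shows that each $\mathfrak U_\eta \times_U T$ is a smoothoid affinoid, and finite intersections remain smoothoid affinoids by the same argument (or sousperfectoid affinoids by \cite[Propositions~6.3.3--6.3.4]{ScholzeBerkeleyLectureNotes}).

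For the acyclicity, in case (1) the sheaf $g'^\ast F$ is coherent on the rigid space $X \times_S T$, and Kiehl's/Tate's acyclicity theorem gives vanishing of higher coherent cohomology on each affinoid member and intersection. In case (2), $g'^\ast F$ is a vector bundle on the sousperfectoid space $X \times_S T$; vector bundles on sousperfectoid affinoids have vanishing higher cohomology (for instance by reducing v-locally to the perfectoid case via the sousperfectoid structure and applying Scholze's acyclicity for vector bundles on affinoid perfectoids, cf.\ the results underlying \cite[\S2]{heuer-sheafified-paCS}).

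The main obstacle is really just the acyclicity statement in case (2), since in case (1) it is classical. But this is essentially built into the theory of smoothoid/sousperfectoid spaces used throughout the paper, so I would cite it rather than reprove it. Once both points are in hand, the claim follows from the general principle that an acyclic cover computes sheaf cohomology via its Čech complex.
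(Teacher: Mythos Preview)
Your proposal is correct and follows essentially the same approach as the paper: reduce to acyclicity of $g'^\ast F$ on affinoids, then invoke Kiehl's Theorem~B in the rigid case and a vector-bundle acyclicity result in the sousperfectoid case. The only minor difference is that the paper cites \cite[Theorem~1.3.4]{Kedlaya-AWS} directly for the sousperfectoid acyclicity rather than sketching a v-descent argument, and the separatedness/intersection check you spell out was already recorded just before the Claim.
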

		\begin{proof}
			It suffices to see that $g'^{\ast} F$ is acyclic on affinoid subspaces of $ X\times_{ S}T$. In the rigid case, this is immediate from Kiehl's Theorem B \cite{Kiehl-Theorem-A-und-B}. In the sousperfectoid case, this is in general more subtle, but our additional assumptions imply that $g'^{\ast} F$ is still a finite locally free module. In this case, by \cite[Theorem 1.3.4]{Kedlaya-AWS}, one has the desired acyclicity.
		\end{proof}	
		
		\begin{Claim}\label{cl:check-computes-base-change}
			$\check{C}^\bullet(\mathfrak U_\eta\times_UT, g'^\ast F)=C^\bullet\hat{\otimes}_RS[\tfrac{1}{\varpi}]$.
		\end{Claim}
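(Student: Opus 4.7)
The plan is to verify the claimed identity degree-by-degree. Since $\mathfrak{X}\to \mathfrak{S}$ is separated, every finite intersection $\mathfrak U_I$ of opens in the finite affine cover $\mathfrak U$ is again affine, say $\mathfrak U_I = \Spf(A_I)$, and $M_I := \mathfrak F(\mathfrak U_I)$ is a finitely presented $A_I$-module, because $\mathfrak F$ is coherent. The Čech differentials in $C^\bullet$ are $R$-linear, so after $\varpi$-adic completion over $R$ (applied to the finite direct sums termwise) and inverting $\varpi$, it suffices to produce for each index $I$ a natural identification
\[ g'^{\ast}F\big(\mathfrak U_{I,\eta}\times_U T\big)\; \cong\; M_I\hotimes_R S[\tfrac{1}{\varpi}].\]
Compatibility with the Čech restriction maps will be automatic from naturality, so the whole claim reduces to this local computation.

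The next step is to identify the right-hand affinoid. Writing $U=\Spa(R[\tfrac 1\varpi])$ and $T=\Spa(S[\tfrac 1\varpi])$ (using $S$ as a ring of definition, which in case~(2) is chosen affinoid perfectoid after the earlier localisation), the standard description of Tate fibre products in case~(1), and the compatibility of the sousperfectoid fibre product already recalled from \cite[Propositions 6.3.3, 6.3.4]{ScholzeBerkeleyLectureNotes} in case~(2), gives
\[ \mathfrak U_{I,\eta}\times_U T = \Spa\bigl((A_I\hotimes_R S)[\tfrac{1}{\varpi}]\bigr).\]
The pullback $g'^\ast F$ on this affinoid is then computed as the module obtained by base change:
\[ g'^\ast F\bigl(\mathfrak U_{I,\eta}\times_U T\bigr) = M_I[\tfrac 1\varpi]\otimes_{A_I[\tfrac 1\varpi]}(A_I\hotimes_R S)[\tfrac 1\varpi] = M_I\otimes_{A_I}(A_I\hotimes_R S)[\tfrac 1\varpi].\]
In case~(1) this is the standard affinoid pullback; in case~(2) it relies on the preceding claim that $g'^\ast F$ is finite locally free with the expected sections on sousperfectoid affinoids, via \cite[Theorem 1.3.4]{Kedlaya-AWS}.

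The final ingredient is the commutation of completion with base change for finitely presented modules. Choosing a finite presentation $A_I^p\to A_I^n\to M_I\to 0$, applying $-\otimes_R S$ and taking $\varpi$-adic completions termwise, right-exactness of completion on finitely generated $R$-modules together with the fact that completion commutes with finite direct sums identifies the cokernels and yields the canonical isomorphism $M_I\otimes_{A_I}(A_I\hotimes_R S) = M_I\hotimes_R S$. Inverting $\varpi$ gives the desired identification. The main subtlety is the sousperfectoid case, where one must know a priori that $g'^\ast F$ behaves like a coherent pullback on affinoids; this is where the hypotheses of case~(2) enter, and is handled by the vector bundle hypothesis together with Kedlaya's acyclicity result already invoked in the previous claim.
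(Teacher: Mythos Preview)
Your overall plan is the same as the paper's: identify the \v{C}ech complexes term by term by computing sections over each intersection in the cover. The paper, however, reduces the whole claim to identifying the \emph{ring} $\O(g'^{-1}(V)) = R'\hat\otimes_R S[\tfrac{1}{\varpi}]$ (where $\mathfrak V=\Spf(R')$), leaving the passage from $\O$ to $F$ implicit; your explicit finitely-presented-module argument for $M_I\otimes_{A_I}(A_I\hat\otimes_R S)\cong M_I\hat\otimes_R S$ is a welcome addition, though the right-exactness step is over $A_I$ (or $A_I\otimes_R S$), not over $R$, so the phrase ``finitely generated $R$-modules'' is not quite right.

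There is, however, a genuine gap in your treatment of case~(2). You write that Propositions~6.3.3 and~6.3.4 of \cite{ScholzeBerkeleyLectureNotes} give $\mathfrak U_{I,\eta}\times_U T = \Spa\bigl((A_I\hat\otimes_R S)[\tfrac{1}{\varpi}]\bigr)$. Those propositions only say that \'etale maps and polynomial extensions preserve sousperfectoidness; they were invoked earlier to see that the adic space $X\times_S T$ exists. They do \emph{not} directly tell you that the specific Huber pair $(A_I\hat\otimes_R S)[\tfrac{1}{\varpi}]$ is sheafy, which is what you need for $\Spa$ of it to be the fibre product. A smooth affinoid $\mathfrak U_{I,\eta}\to U$ need not admit a \emph{global} standard-\'etale map to a torus, so the propositions do not apply to this ring without further work. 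This is precisely why the paper, earlier in the proof of \Cref{t:Kiehl-extended}, passed to a further admissible blow-up to arrange that the generic fibre of each member of $\mathfrak U$ is standard-smooth over $U$. The paper's proof of the claim then \emph{uses} this arrangement: it reduces to the three elementary cases (relative torus, finite \'etale, rational open) and verifies the ring identity in each. Your argument becomes correct once you invoke this standard-smooth structure rather than the bare sousperfectoid propositions.
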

		\begin{proof}
			Unravelling the definition,  it suffices to prove the following:
			If $\mathfrak V=\Spf(R')\in \mathfrak U$ is any finite intersection of affine opens in the cover, and we set $V:=\mathfrak V_\eta$, then
			\[\O(g'^{-1}(V))= R'\hat{\otimes}_RS[\tfrac{1}{\varpi}],\]
			where $\hat{\otimes}$ is the $\varpi$-adically completed tensor product.
			If $T=\Spa(S[\tfrac{1}{\varpi}])$ is rigid, this is clear. 
			
			When $T$ is sousperfectoid, we use our assumption that the generic fibre $V\to S$ is standard smooth: This ensures that $V\times_UT$ is sheafy, and it allows us to reduce to  the case that $V\to S$ is either of the following: 1) a relative torus, 2) finite \'etale, or 3) a rational open subspace. For 1) and 2) the desired statement is clear. It remains to consider the case that $V\to S$ is an open immersion. In this case, the description follows directly from the definition of the structure presheaf on rational open subspaces.
		\end{proof}

		We are thus left to compute $C^\bullet\hat{\otimes}_RS$. 
		By the Proper Mapping Theorem for $f:\mathfrak X\to \mathfrak S$, \cite[Théorème 2.11.5]{AbbesEGR},  $H^i(C^\bullet)$ is a finitely presented $R$-module for any $i\in \N$. We can therefore use the following lemma: 
		
		\begin{Lemma}[{\cite[\S II.5 Lemma 1]{MumfordAV}}]\label{l:perfect-complex}
			Let $A$ be a commutative ring. We assume that $A$ is Noetherian, or that $A$ is a flat $\O_K$-algebra of topologically finite presentation.
			\begin{enumerate}
				\item 	Let $f:M\to N$ be a morphism of finite flat $A$-modules. Then $\ker f$ is an $A$-module of finite presentation.
				\item 	Let $C^\bullet$ be a bounded complex of flat $A$-modules. If each $H^i(C^\bullet)$ is finitely presented as an $A$-module, then there is a perfect complex $P^\bullet$ with a quasi-isomorphism $P^\bullet\to C^\bullet$.
			\end{enumerate}
		\end{Lemma}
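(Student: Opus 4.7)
The plan is to handle the two parts together: Part 1 is the technical input used to run the descending induction in Part 2. The Noetherian case of Part 1 is immediate; the real work lies in the $\O_K$-flat topologically finitely presented case. I would first dispose of the Noetherian case of Part 1 by the observation that $\ker f$ is a submodule of the Noetherian module $M$, hence finitely generated, hence (over $A$ Noetherian) finitely presented.

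For the second case of Part 1, the strategy is to reduce mod $\varpi$: since $A$ is $\O_K$-flat and topologically of finite presentation, the reduction $\bar{A}:=A/\varpi A$ is a finitely presented algebra over the residue field $k=\O_K/\varpi$, hence Noetherian. Because $M$ and $N$ are flat over the $\O_K$-flat ring $A$, they are $\varpi$-torsion free; tensoring $0\to \ker f \to M \to \im f \to 0$ and $0\to \im f \to N\to \coker f\to 0$ with $\bar{A}$ and chasing the snake shows $(\ker f)/\varpi (\ker f)$ injects into $\ker\bar{f}$, which is finitely generated over $\bar{A}$ by the Noetherian case. Lifting generators and exploiting that finite flat modules over topologically finitely presented $\O_K$-algebras are $\varpi$-adically complete yields finite generation of $\ker f$ by a topological Nakayama argument. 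To upgrade finite generation to finite presentation, choose a surjection $A^r\twoheadrightarrow \ker f$, apply the same reduction-mod-$\varpi$ argument to its kernel, and again lift; alternatively, one invokes that finite flat modules over such $A$ are finite projective (Raynaud--Gruson, in the form of \cite[Théorème 5.8.1]{AbbesEGR}), which makes finite presentation automatic once finite generation is established.

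For Part 2, the plan is descending induction on the amplitude of $C^\bullet$, supposed concentrated in degrees $[a,b]$. At the top degree, since $H^b(C^\bullet)$ is finitely presented, choose a finite free $A$-module $P^b$ and a map $\alpha\colon P^b\to C^b$ whose image surjects onto $H^b(C^\bullet)$; since $C^{b+1}=0$, $\alpha$ automatically lands in cycles. Replace $C^\bullet$ by the quasi-isomorphic complex with $C^{b-1}\oplus P^b$ in degree $b-1$ and differential $(d^{b-1},\alpha)$. The new top cohomology vanishes, so truncating produces a complex $\widetilde{C}^\bullet$ of strictly smaller amplitude whose top term is $\ker(d^{b-1}+\alpha)$; by Part 1 applied to the map $C^{b-1}\oplus P^b \to C^b$ between finite flat modules, this kernel is finitely presented. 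An easy variant (replacing the kernel by a surjecting finite projective and correcting lower degrees) preserves flatness of the complex while keeping cohomology unchanged below the top. The induction therefore terminates with a bounded complex of finite projectives $P^\bullet$ and a quasi-isomorphism $P^\bullet \to C^\bullet$.

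The main obstacle will be the topologically finitely presented case of Part 1, specifically the passage from finite generation to finite presentation of $\ker f$; this hinges on the good behavior of finite flat modules over admissible $\O_K$-algebras, and is where we draw on the Raynaud--Gruson-type results already invoked earlier in the proof of \Cref{t:Kiehl-extended}. Everything else is either classical (Noetherian case) or a formal induction once Part 1 is in hand.
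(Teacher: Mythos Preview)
Your approach to Part~1 in the topologically finitely presented case has a genuine gap. You claim that $\bar A := A/\varpi A$ is a finitely presented algebra over the residue field $k = \O_K/\varpi$, hence Noetherian. But $\O_K/\varpi$ is the residue field only when $\varpi$ generates the maximal ideal $\m$, i.e.\ when $\O_K$ is a DVR. In the setting of the paper, $K$ is allowed to be any non-archimedean field over $\Z_p$, and the main case of interest is $K$ perfectoid, where $\O_K$ is highly non-Noetherian and $\O_K/\varpi$ is not a field (it is a local ring whose maximal ideal consists entirely of nilpotents but is not finitely generated). Consequently $A/\varpi$ need not be Noetherian, and your appeal to the Noetherian case to control $\ker \bar f$ breaks down. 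Your fallback suggestion---that finite flat modules over such $A$ are finite projective by Raynaud--Gruson in the form of \cite[Th\'eor\`eme~5.8.1]{AbbesEGR}---misidentifies that reference (it is a flattening-by-blowup theorem, not a statement about finite flat modules) and in any case would require knowing $\ker f$ is flat, which you have not established. The paper instead bypasses reduction mod $\varpi$ entirely and invokes the coherence-type results of Bosch \cite[\S7.3 Lemma~7, Theorem~4]{Bosch-lectures} directly: the inclusion $\ker f\hookrightarrow M$ has $\varpi$-torsionfree cokernel $\im f\subseteq N$, which over a TFP $\O_K$-algebra already forces $\ker f$ to be finite; and a finite $\varpi$-torsionfree module over such $A$ is automatically finitely presented.

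Your treatment of Part~2 also misapplies Part~1. You propose to apply Part~1 to the map $C^{b-1}\oplus P^b \to C^b$ ``between finite flat modules'', but the hypothesis of Part~2 only asks that the $C^i$ be flat, not finite flat---indeed, in the application within the proof of \Cref{t:Kiehl-extended}, the $C^i$ are \v{C}ech terms and typically not finite. The standard Mumford construction (which the paper simply cites) builds $P^\bullet$ directly by descending induction, and the only kernels one needs to control are $\ker(P^{i+1}\to P^{i+2})$ between the already-constructed \emph{finite free} modules; it is to these that Part~1 applies. Once Part~1 is available, Part~2 really is just Mumford's argument with ``finitely generated'' replaced by ``finitely presented'' throughout, as the paper says.
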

		\begin{proof}
			Part 1 is clear in the Noetherian case. If $A$ is of topologically finite presentation over $\O_K$, it follows from \cite[\S7.3 Lemma 7]{Bosch-lectures}: Since $N$ is $\varpi$-torsionfree, $\ker f\subseteq M$ has $\varpi$-torsionfree cokernel, which already implies that it is a finite $A$-module. By \cite[\S7.3 Theorem 4]{Bosch-lectures}, $M$ is then automatically of finite presentation as it is $\varpi$-torsionfree.
			
			Due to part 1, part 2 can now be seen exactly as in the Noetherian case, cf \cite[\S II.5 Lemma 1]{MumfordAV}, replacing finitely generated modules with finitely presented ones everywhere.
		\end{proof}
		We now use that $\mathfrak F$ is $\mathfrak S$-flat. By \cite[Proposition 1.12.6]{AbbesEGR}, this implies that  $C^\bullet$ is a complex of flat $R$-modules.
		We can thus apply \cref{l:perfect-complex} to find a bounded complex $P^\bullet$ of finite projective $R$-modules with a quasi-isomorphism $h:P^\bullet \to C^\bullet$. It now remains to see:
		\begin{Claim}
			$h\hat{\otimes}_RS:P^\bullet\hat{\otimes}_RS\to C^\bullet \hat{\otimes}_RS$
			is still a quasi-isomorphism.
		\end{Claim}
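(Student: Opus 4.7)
The plan is to reduce the claim to the acyclicity of the mapping cone $K^\bullet := \operatorname{cone}(h)$ after completed base change. Since each $P^i$ is finite projective, $P^i \hat\otimes_R S = P^i \otimes_R S$, and since $\hat\otimes_R S$ commutes with finite direct sums, the cone of $h \hat\otimes_R S$ coincides with $K^\bullet \hat\otimes_R S$. Observe that $K^\bullet$ is bounded and acyclic with flat $R$-module terms: the $C^i$ are finite direct sums of the values $\mathfrak F(\mathfrak U_I)$ on finite intersections of opens in the cover $\mathfrak U$, which are $R$-flat since $\mathfrak F$ is $\mathfrak S$-flat.

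First I would verify, for each $n \geq 1$, that $K^\bullet \otimes_R S/\varpi^n$ is acyclic. This rests on the classical fact that a bounded acyclic complex of flat modules has all cocycles $Z^i := \ker d^i$ flat (by downward induction via $0 \to Z^i \to K^i \to Z^{i+1} \to 0$), so that tensoring with an arbitrary $R$-module preserves acyclicity.

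The second step is to pass to the inverse limit. Since $K^i$ is $\varpi$-torsion free, $K^i \hat\otimes_R S = \lim_n K^i \otimes_R S/\varpi^n$ with surjective transitions. Writing $Z_n^i$ for the cocycles at level $n$, acyclicity at each level gives short exact sequences $0 \to Z_n^{i-1} \to K^{i-1} \otimes_R S/\varpi^n \to Z_n^i \to 0$. The hard part will be verifying that the transitions $Z_{n+1}^i \to Z_n^i$ remain surjective, since a lift to $K^i \otimes_R S/\varpi^{n+1}$ of a cycle need not itself be a cycle. I expect to resolve this by a boundary-lifting trick: given $z \in Z_n^i$, write $z = d\bar y$ by acyclicity, lift $\bar y$ to some $y \in K^{i-1}\otimes_R S/\varpi^{n+1}$, and take $dy$, which is automatically a cycle lifting $z$. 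Granted this, $R^1\lim_n$ vanishes on all three terms of the short exact sequence, $\lim_n$ preserves exactness, and the resulting sequence
\[0 \to \lim_n Z_n^{i-1} \to K^{i-1}\hat\otimes_R S \to \lim_n Z_n^i \to 0\]
together with the identification $\lim_n Z_n^i = \ker(d\colon K^i\hat\otimes_R S \to K^{i+1}\hat\otimes_R S)$ from left exactness of $\lim$ gives precisely the acyclicity of $K^\bullet \hat\otimes_R S$.
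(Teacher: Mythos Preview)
Your proposal is correct and follows essentially the same approach as the paper: both reduce to showing that the mapping cone of $h$, a bounded acyclic complex of flat $R$-modules, remains acyclic after $\hat\otimes_R S$. The paper packages the inverse-limit step into a general appendix lemma on commuting cohomology with completed tensor products, whereas you carry out the special case directly via the boundary-lifting Mittag--Leffler argument; the underlying mechanism is the same.
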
 
		\begin{proof}
			The mapping cone $L^\bullet$ of $h$ is an exact complex that fits into a short exact sequence 
			\[ 0\to C^\bullet\to L^\bullet\to P^\bullet[1] \to 0\]
			of complexes of $A$-modules. As it is split exact in each degree, it remains exact after applying $-\hotimes_R S$. Hence	$L^\bullet\hotimes_R S$ is the mapping cone of $h\hat{\otimes}_RS$. It therefore suffices to see that 
			$L^\bullet\hotimes_R S$
			is still exact. To see this, we use that both $C^\bullet$ and $P^\bullet$ consist of flat $R$-modules, and thus in particular the same is true for $L^\bullet$. The statement now follows from \Cref{p:commute-hotimes-and-H}.2.
			\end{proof}
		This completes the proof  of \cref{t:Kiehl-extended}.
	\end{proof}
	Before going on, we note the following useful application:
	\begin{Corollary}\label{c:cohom-F-product-smooth-proper-perfectoid}
		Let $X$ be a smooth proper rigid space over $K$ and let $Y$ be an affinoid sousperfectoid space over $K$. Let $F$ be an analytic vector bundle on $X$. Then for any $i\geq 0$:
		\begin{enumerate}
			\item $H^i_\an(X\times Y,F)=H^i_\an(X,F)\otimes_K \O(Y)$
			\item $H^i_v(X\times Y,F)=H^i_v(X,F)\otimes_K \O(Y)$
		\end{enumerate}
		In particular, for the structure map $f:X\to \Spa(K)$, we have \[R^if_{v\ast}F=H^i_v(X,F)\otimes_K\mathbb G_a.\]
	\end{Corollary}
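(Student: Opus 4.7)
For part 1, the plan is to apply \Cref{t:Kiehl-extended} to the projection $f : X \times \Spa(K) \to \Spa(K)$ in the setting described in \Cref{r:setting-3-cohom-bc}, with $T = Y$ in case~2. Since the base $S = \Spa(K)$ is a point, the resulting bounded complex $P^\bullet$ of finite projective $K$-modules is just a bounded complex of finite-dimensional $K$-vector spaces. Taking $T = \Spa(K)$ gives $R\Gamma(X,F) = P^\bullet$, while taking $T = Y$ gives $R\Gamma(X \times Y, F) = P^\bullet \otimes_K \O(Y)$. Passing to cohomology and using flatness of $\O(Y)$ over $K$ (all $K$-modules are flat) immediately yields part~1.

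For part~2, the natural approach is to use the Leray spectral sequence for $\lambda : (X \times Y)_v \to (X \times Y)_\an$,
\[ E_2^{pq} = H^p_\an(X \times Y, R^q\lambda_* F) \Rightarrow H^{p+q}_v(X \times Y, F), \]
and compare it to the analogous sequence on $X$. By \Cref{p:prop28} together with the projection formula for the locally free sheaf $F$, and using the base-change description of $R^n\lambda_*\O$ in \Cref{p:prop28} for the base-change of $X\to\Spa(K)$ along $Y\to\Spa(K)$, one expects
\[ R^q\lambda_* F \;=\; \pi_X^* F \otimes_{\O} \wt\Omega^q_{X \times Y} \;=\; \pi_X^*\bigl(F \otimes_{\O_X} \wt\Omega^q_X\bigr). \]
Since $F \otimes_{\O_X} \wt\Omega^q_X$ is an analytic vector bundle on $X$, part~1 applies to it and gives $E_2^{pq}(X \times Y) = E_2^{pq}(X) \otimes_K \O(Y)$. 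Thus the Leray spectral sequence on $X \times Y$ is obtained from the one on $X$ by tensoring with the flat $K$-module $\O(Y)$; comparing abutments (using flatness again to commute tensor with passage to $E_\infty$) yields part~2.

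The ``in particular'' statement then follows by sheafifying: by definition, $R^i f_{v*} F$ is the v-sheafification on $\Spa(K)_v$ of the presheaf $Y \mapsto H^i_v(X \times Y, F)$ on affinoid perfectoid $Y \to \Spa(K)$. Part~2 shows this presheaf is already $H^i_v(X, F) \otimes_K \O(Y)$, and $Y \mapsto \O(Y)$ is the v-sheaf $\mathbb{G}_a$.

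The main obstacle is justifying the identification $R^q\lambda_* F = \pi_X^*(F \otimes \wt\Omega^q_X)$ when $Y$ is only sousperfectoid, since \Cref{p:prop28} is stated for smoothoid spaces (which require a perfectoid base). This can be bridged by choosing an affinoid perfectoid v-cover $\tilde Y \to Y$ so that $X \times \tilde Y$ is honestly smoothoid, applying \Cref{p:prop28} there, and descending along $X \times \tilde Y \to X \times Y$ using that both sides of the identification are of v-local nature.
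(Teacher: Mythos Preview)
Your proof follows essentially the same route as the paper's: part~1 via \Cref{t:Kiehl-extended} applied over $S=\Spa(K)$ together with flatness of $\O(Y)$ over $K$, and part~2 via the Leray spectral sequence for $\lambda$ combined with the projection formula $R\lambda_\ast F \simeq F\otimes R\lambda_\ast\O$ and a comparison of $E_2$-pages tensored with $\O(Y)$. The paper in fact applies \Cref{p:prop28} to $X\times Y$ directly without addressing the sousperfectoid-versus-smoothoid issue you flag in your final paragraph.
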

	This generalizes \cite[Proposition 4.2]{heuer-diamantine-Picard}, which treats the case of $F=\O$.
	
	\begin{proof}
		We apply \Cref{t:Kiehl-extended} to $f$. Using that $\O(Y)$ is flat over $K$, this shows that
		\[H^i_\an(X\times Y,F)=H^i(P^\bullet \otimes_K\O(Y))=H^i(P^\bullet)\otimes_K\O(Y)=H^i_\an(X,F)\otimes_K \O(Y).\]
		To deduce 2, we use that by \Cref{p:prop28} and \Cref{d:def-wtOm},
		the Leray spectral sequence for the morphism $\lambda:(X\times Y)_v\to (X\times Y)_\an$ is given by
		\[ H^i_\an(X\times Y,F\otimes \wtOm^j)\Rightarrow H^{i+j}_v(X\times Y,F)\]
		where on the left we used the projection formula  $R\lambda_\ast F=F\otimes R\lambda_\ast \O.$
		By part 1,
		\[H^i_\an(X\times Y,F\otimes \wtOm^j)=H^i_\an(X,F\otimes \wtOm^j)\otimes_K\O(Y).\]
		The result follows by comparing to the Leray sequence for $\lambda:X_v\to X_\an$ tensored with $\O(Y)$: The natural map between $E_2$-pages is an isomorphism, hence also between the abutments.
	\end{proof}
	
	\subsection{Grauert's Theorem in rigid analytic geometry}
	Our second version of ``cohomology and base change'' is a rigid analytic version of the  following Theorem of Grothendieck, which is sometimes also referred to as ``Grauert's Theorem'':
	
	\begin{Proposition}[{\cite[\S II.5]{MumfordAV}}]\label{t:cohom-bc-schemes}
		Let $f: X\to  S$ be a proper morphism of locally Noetherian schemes where $S$ is reduced. Let $F$ be an $S$-flat coherent $\O_X$-module. 
		For any $s\in S$, let $k(s)$ be the residue field of $s$, denote by $X_s:=X\times_S\Spec(k(s))$ the fibre of $X$ over $s$ and by $F_s$ the base-change of $F$ along $X_s\to X$. Then for any $i\geq 0$, the following are equivalent:
		\begin{enumerate}
			\item The following function on the topological space underlying $S$ is locally constant:
			\[S\to \Z, \quad s\mapsto \dim_{k(s)}H^i( X_s,F_s)\]
			
			\item The $\O_S$-module $R^if_{\ast} F$ is finite locally free, and for any $s\in  S$, the natural map 
			\[ (R^if_{\ast} F)_s\otimes_{\O_{S,s}} {k(s)}\to H^i( X_s,F_s)\]
			is an isomorphism.
		\end{enumerate}
	\end{Proposition}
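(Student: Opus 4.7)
The implication $(2) \Rightarrow (1)$ is immediate: if $R^if_{\ast} F$ is finite locally free of rank $r$ near $s$ and the base-change map is an isomorphism, then $\dim_{k(s)} H^i(X_s, F_s) = r$ is locally constant in $s$.

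For $(1) \Rightarrow (2)$, the plan is to reduce the problem to a statement about perfect complexes via the scheme version \Cref{p:base-change-proper-map-classical} of \Cref{t:Kiehl-extended}. The statement being local on $S$, we may take $S = \Spec(A)$ affine, and then obtain a bounded complex $P^\bullet$ of finite projective $A$-modules such that $H^j(X_T, g'^{\ast} F) = H^j(P^\bullet \otimes_A B)$ for every base change $T = \Spec(B) \to S$; in particular $H^i(X_s, F_s) = H^i(P^\bullet \otimes_A k(s))$. The problem thus reduces to the following algebraic statement: for such a complex over a reduced Noetherian ring $A$ with $s \mapsto \dim_{k(s)} H^i(P^\bullet \otimes_A k(s))$ locally constant, show that $H^i(P^\bullet)$ is finite locally free and its formation commutes with base change.

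For this I would run the classical Grothendieck--Mumford ``exchange property'' argument. Writing $d^j \colon P^j \to P^{j+1}$ for the differentials and $r^j(s) := \dim_{k(s)} \im(d^j \otimes k(s))$, one has
\[
\dim_{k(s)} H^i(P^\bullet \otimes k(s)) = \dim P^i - r^i(s) - r^{i-1}(s).
\]
Each $r^j$ is lower semi-continuous (the locus where an $r \times r$ minor of the matrix of $d^j$ is non-vanishing is open), and local constancy of the left-hand side forces $r^i + r^{i-1}$ to be locally constant. Reducedness of $A$ then upgrades this to local constancy of each $r^j$ individually: along any specialization $s \rightsquigarrow s'$ in $S$, lower semi-continuity yields $r^j(s) \geq r^j(s')$, while constancy of their sum forces both inequalities to be equalities. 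Hence each $r^j$ is constant on specialization chains emanating from a generic point of an irreducible component, and therefore locally constant on the reduced scheme $S$.

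Once $r^i$ and $r^{i-1}$ are locally constant, a standard argument (again using reducedness of $A$, in the form that a finitely generated module over a reduced Noetherian ring with locally constant fibre rank is locally free) shows that $\coker(d^j)$ is finite locally free, so that $\im d^j$ is a direct summand of $P^{j+1}$, and dually $\ker d^j$ is a direct summand of $P^j$, for $j = i, i-1$. Splitting the short exact sequence
\[
0 \to \im d^{i-1} \to \ker d^i \to H^i(P^\bullet) \to 0
\]
then yields local freeness of $H^i(P^\bullet)$ and its compatibility with any base change $A \to k(s)$. The main obstacle is the transition from local constancy of $h^i$ alone to that of the individual ranks $r^i$ and $r^{i-1}$, and this is precisely the step where the reducedness hypothesis on $S$ plays its crucial role.
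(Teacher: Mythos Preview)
Your argument is correct and mirrors Mumford's proof, which the paper cites for this statement and reproduces in the rigid analogue via \Cref{cl:cbc-constant-fibre-dimension}. One small clarification: reducedness is not needed to pass from local constancy of $r^i+r^{i-1}$ to that of each $r^j$ (two lower semi-continuous functions with locally constant sum are each automatically continuous); it enters only where you invoke it the second time, in deducing local freeness of $\coker d^j$ from constancy of its fibre rank.
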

	
	For our non-archimedean version, we will replace points $s$ of the scheme $S$ with geometric points of rigid spaces: We therefore now fix  $C=\widehat{\overline{K}}$, the completion of an algebraic closure.
	\begin{Proposition}\label{t:cohomology-and-base-change-rigid}
		Let $f: X\to  S$ be a proper morphism of rigid spaces over $K$ where $S$ is reduced. Let $ F$ be an $S$-flat coherent $\O_X$-module. For any $s\in S(C)$, we denote by $X_s:=X\times_S\Spa(C)$ the fibre of $X$ over $s$ and  by $F_s$ the base-change of $F$ along $X_s\to X$. Then for any $i\geq 0$, the following are equivalent:
		\begin{enumerate}
			\item The following function on $S(C)$ is Zariski-locally constant:
			\[b_i: S(C)\to \Z,\quad s\mapsto \dim_{C}H^i( X_s,  F_s)\]
			\item The $\O_S$-module $R^if_{\ast}F$ is finite locally free, and for any $s\in  S(C)$, the natural map
			\[ (R^if_{\ast} F)_s\otimes_{\O_{S,s}} C\to H^i( X_s, F_s)\]
			is an isomorphism.
		\end{enumerate}
	\end{Proposition}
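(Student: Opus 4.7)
The implication (2) $\Rightarrow$ (1) is immediate: if $R^if_{\ast}F$ is finite locally free and the base-change map is an isomorphism, then $b_i(s)$ equals the rank of the vector bundle $R^if_{\ast}F$ at $s$, which is locally constant on $S$.

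For the converse (1) $\Rightarrow$ (2), the plan is to follow Mumford's classical argument from \cite[\S II.5]{MumfordAV}, using \Cref{t:Kiehl-extended} as the rigid analytic substitute for the algebraic base-change result. Since the conclusion is local on $S$, I will choose an affinoid open $U = \Spa(A) \subseteq S$ and a bounded complex $P^\bullet$ of finite projective $A$-modules as produced by \Cref{t:Kiehl-extended}. For any geometric point $s \in S(C)$, factoring through a classical point $\Spa(L) \hookrightarrow U$ with $L/K$ finite, the rigid case of \Cref{t:Kiehl-extended} gives $R\Gamma(X \times_U \Spa(L), F|_{X \times_U \Spa(L)}) = P^\bullet \otimes_A L$; flat base-change from $L$ to $C$ then yields $H^i(X_s, F_s) = H^i(P^\bullet \otimes_A C)$, reducing the statement to linear algebra for the complex $P^\bullet$.

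Write $d^j: P^j \to P^{j+1}$ for the differentials and $r_j(s) := \rk(d^j \otimes_A C)$, so that $b_i(s) = \rk(P^i) - r_i(s) - r_{i-1}(s)$. Each $r_j$ is lower semicontinuous on $|S|$ in the analytic Zariski topology, since the locus $\{r_j \geq r\}$ is the non-vanishing locus of the $r \times r$ minors of any matrix representing $d^j$. Because the sum $r_i + r_{i-1} = \rk(P^i) - b_i$ is Zariski-locally constant by hypothesis and each summand is lower semicontinuous, both $r_i$ and $r_{i-1}$ must themselves be Zariski-locally constant. Shrinking $U$, I may assume each is constant on $U$. Since $A$ is a reduced, Noetherian, Jacobson ring (being an affinoid $K$-algebra), the standard algebraic criterion that a finitely presented module over such a ring with constant rank on the maximal spectrum is projective implies that $\ker d^j$, $\im d^j$, and $\coker d^j$ are finite projective $A$-modules for $j \in \{i-1, i\}$.

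The short exact sequence $0 \to \im d^{i-1} \to \ker d^i \to H^i(P^\bullet) \to 0$ then splits, exhibiting $H^i(P^\bullet)$ as a finite projective $A$-module of rank $b_i$ whose formation commutes with any base change $A \to B$. Sheafifying and combining with \Cref{t:Kiehl-extended} yields the desired assertions about $R^if_{\ast}F$. The main technical subtlety will arise in the rank-constancy step: I must check that constancy of $r_j$ on the geometric points $S(C)$ translates to constancy on the maximal ideals of $A$ (immediate since each geometric point factors through a maximal ideal), and then invoke the Jacobson property to promote this to constancy on all of $\Spec A$, which is what the standard commutative-algebra criterion actually requires in order to conclude projectivity.
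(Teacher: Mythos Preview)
Your approach is essentially the same as the paper's: both follow Mumford's argument from \cite[\S II.5]{MumfordAV}, replacing the algebraic base-change complex by the perfect complex $P^\bullet$ from \Cref{t:Kiehl-extended}, and both use the Jacobson property of affinoid algebras (the paper isolates this as \Cref{l:red-rigid-space-check-on-fibres}) to pass from constant fibre rank to projectivity. Two small points are worth tightening. First, the detour through a classical point $\Spa(L)$ is unnecessary: \Cref{t:Kiehl-extended} already applies directly with $T=\Spa(C)$ as a rigid space over $C$, giving $R\Gamma(X_s,F_s)=P^\bullet\otimes_A C$ in one step (this is how the paper proceeds).

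Second, and more substantively, your claim that the sequence $0\to \im d^{i-1}\to \ker d^i\to H^i(P^\bullet)\to 0$ ``then splits'' is not yet justified: knowing that $\im d^{i-1}$ and $\ker d^i$ are projective does not by itself force the quotient to be projective. You need one further application of the rank criterion, exactly as in the paper's \Cref{cl:cbc-constant-fibre-dimension}: since $\ker d^i$ is a direct summand of $P^i$, its formation commutes with $-\otimes_A C$, so the induced map $d'^{i-1}\colon P^{i-1}\to \ker d^i$ has $\coker(d'^{i-1}\otimes_A C)=H^i(P^\bullet\otimes_A C)$ of constant dimension $b_i$; hence $H^i(P^\bullet)=\coker d'^{i-1}$ is projective, and only then does the sequence split. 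With this step inserted, your argument is complete and matches the paper's.
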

	While we do not know a reference in the literature for this purely rigid-analytic statement, we do not claim any originality. For example, its possibility is remarked in \cite[p.37 l.1]{Conrad_Ampleness}.
	\begin{proof}
		We clearly have 2 $\Rightarrow$ 1. To see the other direction,
		we follow the original proof of \Cref{t:cohom-bc-schemes} as explained in \cite[\S II.5]{MumfordAV}. The only necessary modification is that we work with $\mathrm{MaxSpec}$ instead of $\Spec$, which is fine due to the following Lemma:
		
		\begin{Lemma}\label{l:red-rigid-space-check-on-fibres}
			Let $X$ be a reduced rigid space over $K$ and let $F$ be a coherent $\O_X$-module. If the function $X(C)\to \Z$, $s\mapsto \dim_{C}F_s$ is constant $=r$, then $F$ is locally free of rank $r$.
		\end{Lemma}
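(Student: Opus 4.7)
The plan is to reduce this to a standard commutative-algebra fact about finite modules on reduced Noetherian rings. Since the assertion is local on $X$, I may assume $X=\Spa(A,A^+)$ is affinoid and that $F$ corresponds to a finite $A$-module $M$, where $A$ is a reduced $K$-affinoid algebra, hence a reduced Noetherian Jacobson ring. The maximal ideals $\mathfrak m\subseteq A$ correspond bijectively to the classical rigid points of $X$, i.e.\ morphisms $\Spa(L)\to X$ with $L/K$ finite. Every $s\in X(C)$ factors through such a classical point, with $L=A/\mathfrak m$ embedded in $C$, and $F_s=(M/\mathfrak m M)\otimes_L C$, so $\dim_C F_s=\dim_L M/\mathfrak m M$. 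The hypothesis therefore translates to
\[
\dim_{A/\mathfrak m}\,M/\mathfrak m M \;=\; r \qquad\text{for every maximal ideal } \mathfrak m\subseteq A.
\]

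Next I would upgrade this constancy on closed points to constancy on all of $\Spec A$ by combining upper semi-continuity of the fibre-rank function with the Jacobson property. The function $\varphi:\Spec A\to\Z$, $\mathfrak p\mapsto \dim_{\kappa(\mathfrak p)}M\otimes_A\kappa(\mathfrak p)$, is upper semi-continuous, so $\{\varphi\geq r+1\}$ is closed in $\Spec A$; since $A$ is Jacobson every nonempty closed subset contains a maximal ideal, and by hypothesis $\varphi(\mathfrak m)=r$ for every such, so this set is empty. Dually, $\{\varphi\leq r-1\}$ is open; maximal ideals are dense in $\Spec A$, so a nonempty such open would contain some $\mathfrak m$, again contradicting the hypothesis. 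Hence $\varphi\equiv r$ on all of $\Spec A$.

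Finally I would invoke the classical algebraic statement: on a reduced Noetherian ring, a finite module whose fibre rank is constant on $\Spec A$ is projective (this is the same fact that underlies the implication $(1)\Rightarrow (2)$ in the scheme-theoretic \Cref{t:cohom-bc-schemes}, cf.\ \cite[\S II.5]{MumfordAV}). Thus $M$ is a finite projective $A$-module of rank $r$, which means $F$ is a locally free $\O_X$-module of rank $r$.

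The only mildly subtle step is the passage from $C$-points to maximal ideals of $A$: one has to check that the hypothesis, phrased in terms of $\Spa(C)$-valued geometric points, recovers exactly the fibre dimension at every classical rigid point. The rest is a routine combination of upper semi-continuity and the Jacobson property of affinoid algebras.
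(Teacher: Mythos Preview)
Your approach is essentially the same as the paper's, which localizes to a Jacobson affinoid $\Spa(A)$ and then appeals to a commutative-algebra lemma (specifically \cite[\S9, Lemma~(3.7)]{ArbarelloGeometryofcurves}); you have simply unpacked that reference. One small caveat: not every $s\in X(C)$ factors through a classical point (for instance, on $K\langle T\rangle$ the map $T\mapsto c$ with $c\in C$ transcendental over $K$ has kernel $(0)$), but this does not affect your argument, since the implication you actually need is the converse---that every maximal ideal $\mathfrak m$ arises from some $C$-point via an embedding $A/\mathfrak m\hookrightarrow C$---and that is what gives constancy on $\mathrm{MaxSpec}(A)$.
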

		\begin{proof}
			We use that locally, $X=\Spa(A)$ for a Jacobson ring $A$, and $F$ comes from a finitely generated $A$-module. The Lemma now follows from \cite[\S9, Lemma (3.7)]{ArbarelloGeometryofcurves}.
		\end{proof}
		From this, \Cref{t:cohomology-and-base-change-rigid} can be seen exactly as in \cite[p.50]{MumfordAV}. We reproduce the argument for the reader's convenience: We apply \Cref{t:Kiehl-extended} to the Cartesian diagram
		\[
		\begin{tikzcd}
			X_s \arrow[d] \arrow[r] & \Spa(C) \arrow[d] \\
			X \arrow[r]             & S.          
		\end{tikzcd}\]
		As the statement is local, we may assume that $S=\Spa(A)$ is affine and connected, then \Cref{t:Kiehl-extended} says that after further localisation, there is a perfect complex $P^\bullet$ of finite free $A$-modules such that 
		\begin{equation}\label{eq:bc-of-P-to-fibre}
		R\Gamma(X_s,F_s)=P^\bullet\otimes_AC,
		\end{equation}
		where $A\to C$ is the $K$-algebra homomorphism associated to $s$. If $d^i:P^i\to P^{i+1}$ is the transition map, this shows that  
		\begin{equation}\label{eq:dim-estimate-cohom-bc}
			\dim_CH^i(X_s,F_s)=\dim_C(P^i\otimes_AC)-\dim_C\im(d^i\otimes_AC)-\dim_C\im(d^{i-1}\otimes_AC).
		\end{equation}
		Since $P^i$ is finite free, the first term on the right is constant in $s$. The last two terms are lower-semicontinuous in $s$: Indeed, $d^i\otimes_AC$ has rank $<r$ if and only if $\wedge^r(d^i\otimes_AC)=0$, and this happens on the vanishing set of the matrix coefficients of $\wedge^rd^i$. We have thus shown:
		
		\begin{Lemma}\label{l:upper-semicts}	Let $f: X\to  S$ be a proper morphism of rigid spaces over $K$. Let $ F$ be an $S$-flat coherent $\O_X$-module.
			\begin{enumerate}
				\item For any $i\in \N$, the map \[b_i: S(C)\to \Z,\quad s\mapsto \dim_{C}H^i( X_s,  F_s)\] is upper semi-continuous.
				\item The function
				\[\textstyle S(C)\to \Z,\quad s\mapsto \sum\limits_{i\geq 0}(-1)^i\dim_{C}H^i( X_s,  F_s)\]
			 is Zariski-locally constant on $S(C)$.
			\end{enumerate}
		\end{Lemma}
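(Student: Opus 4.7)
The proof is essentially a direct readout of the computations already carried out in the paragraphs above the statement, so my plan is to organise them into the two claimed statements.

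For part 1, I would start from \eqref{eq:dim-estimate-cohom-bc}, which expresses $b_i(s)$ as the constant $\dim_C(P^i\otimes_A C)$ minus two terms of the form $\dim_C \im(d^{\bullet}\otimes_A C)$. Since each $P^i$ is finite projective and we have reduced to the case that $\Spa(A)$ is connected, the first term is indeed constant in $s\in S(C)$. The two image-rank terms are lower semicontinuous, as was already noted: the locus where $\im(d^i\otimes_A C)$ has rank strictly less than $r$ is Zariski-closed in $S$, cut out by the vanishing of all $r\times r$ minors of a matrix representing $d^i$. Subtracting two lower-semicontinuous functions from a constant yields an upper-semicontinuous function, proving part 1.

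For part 2, I would take the alternating sum of \eqref{eq:dim-estimate-cohom-bc} in $i$. The contributions from $\dim_C \im(d^i\otimes_A C)$ and $\dim_C \im(d^{i-1}\otimes_A C)$ cancel by the standard telescoping: reindexing $j=i-1$ turns $\sum_i (-1)^i \dim_C \im(d^{i-1}\otimes_A C)$ into $-\sum_j (-1)^j \dim_C \im(d^j\otimes_A C)$, which is the negative of the other image sum. What remains is
\begin{equation*}
\sum_i (-1)^i b_i(s) \;=\; \sum_i (-1)^i \dim_C(P^i\otimes_A C) \;=\; \sum_i (-1)^i \rk_A P^i,
\end{equation*}
which is Zariski-locally constant on $S$ because each finite projective $P^i$ has locally constant rank on $\Spa(A)$.

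The only point I would want to verify carefully — although it is essentially routine — is the claim that ``rank of $d^i\otimes_A C$ drops'' really cuts out a Zariski-closed analytic subspace of $S$, rather than merely a closed subset of the adic spectrum or of some auxiliary formal model. This is fine because the matrix entries of $d^i$ lie in $A=\O(S)$, and their specialisations at geometric points $s$ are computed by the $K$-algebra map $A\to C$ corresponding to $s$; the simultaneous vanishing of finitely many $r\times r$ minors therefore defines an honest Zariski-closed analytic subspace of $S$, which is precisely what is needed to invoke upper and lower semicontinuity with respect to the Zariski topology of the rigid space $S$.
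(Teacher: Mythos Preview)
Your proposal is correct and follows essentially the same approach as the paper: part~1 is exactly the argument given in the paragraph preceding the lemma (constant term minus two lower-semicontinuous rank functions), and for part~2 you spell out the standard telescoping of the image-rank terms in \eqref{eq:dim-estimate-cohom-bc}, which the paper leaves implicit and instead cites \cite[Theorem~A.1.6]{Conrad_Ampleness}. Your verification that the rank-drop locus is Zariski-closed in the rigid-analytic sense is a useful sanity check not made explicit in the paper.
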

		
		We note that part 2 is also contained in \cite[
		Theorem A.1.6]{Conrad_Ampleness}. 
		
		The key computation for cohomology and base-change is now the following:
		\begin{Lemma}\label{cl:cbc-constant-fibre-dimension}
			Assume that the function $S(C)\to \Z$, $s\mapsto \dim_{C}H^i( X_s,  F_s)$ is constant for some $i\geq 0$. Then $H^i(P^\bullet)$ is projective and for any $A$-module $B$,
			\[H^i(P^\bullet \otimes_AB)=H^i(P^\bullet) \otimes_AB.\]
		\end{Lemma}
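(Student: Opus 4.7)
The plan is to run Mumford's classical proof of \Cref{t:cohom-bc-schemes} with $C$-valued points replacing scheme-theoretic residue fields and \Cref{l:red-rigid-space-check-on-fibres} providing the rigid-analytic version of the step ``constant fibre dimension $\Rightarrow$ locally free''. Writing $d^j:P^j\to P^{j+1}$ for the differentials and $r^j(s):=\dim_C\im(d^j\otimes_AC)$ for each $s\in S(C)$, the first task is to show that both $r^i$ and $r^{i-1}$ are locally constant on $S(C)$. Lower semi-continuity of each $r^j$ follows from the minors argument already used in \Cref{l:upper-semicts}, while \eqref{eq:dim-estimate-cohom-bc} gives $r^i(s)+r^{i-1}(s)=\rk_AP^i-b_i(s)$, which is locally constant by hypothesis. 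The elementary observation that two lower semi-continuous integer-valued functions whose sum is locally constant are themselves locally constant then finishes this step.

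Next, by right-exactness of $-\otimes_AC$, the fibre dimensions at $s\in S(C)$ of the coherent $\O_S$-modules $\coker(d^{i-1})$ and $\coker(d^i)$ equal $\rk_AP^i-r^{i-1}(s)$ and $\rk_AP^{i+1}-r^i(s)$ respectively, and are therefore locally constant by the previous step. Applying \Cref{l:red-rigid-space-check-on-fibres} and shrinking $S$, both cokernels become finite free, so the short exact sequences $0\to\im(d^j)\to P^{j+1}\to\coker(d^j)\to 0$ for $j\in\{i-1,i\}$ split. In particular $\im(d^{i-1})$ and $\im(d^i)$ are finitely generated projective, and thus flat, over $A$; moreover each is a direct summand of the free module $P^{j+1}$, so its formation commutes with base change.

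For the base-change statement, let $B$ be any $A$-module. The splitting of $\im(d^i)$ off $P^{i+1}$ identifies $\ker(d^i)\otimes_AB$ with $\ker(d^i\otimes_AB)$, while flatness of $\im(d^{i-1})$ applied to the short exact sequence
\[0\to\im(d^{i-1})\to\ker(d^i)\to H^i(P^\bullet)\to 0\]
shows that $-\otimes_AB$ preserves its exactness; combined with the identification $\im(d^{i-1})\otimes_AB=\im(d^{i-1}\otimes_AB)$ inside $P^i\otimes_AB$, this yields $H^i(P^\bullet)\otimes_AB\cong H^i(P^\bullet\otimes_AB)$. Specialising to $B=C$ at any $s\in S(C)$ and invoking \eqref{eq:bc-of-P-to-fibre} gives $H^i(P^\bullet)\otimes_AC\cong H^i(X_s,F_s)$, of dimension $b_i(s)$ which is constant. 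A final application of \Cref{l:red-rigid-space-check-on-fibres} to the coherent $\O_S$-module $H^i(P^\bullet)$ then yields that it is finite locally free, hence projective over the Noetherian affinoid algebra $A$.

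The key non-formal input is \Cref{l:red-rigid-space-check-on-fibres}, which replaces the standard scheme-theoretic ``constant fibre dimension $\Rightarrow$ locally free'' lemma and ultimately depends on $A$ being Jacobson so that $C$-valued points suffice to detect local freeness. Beyond this, the only subtlety is the semicontinuity bookkeeping in the first step; all remaining manipulations are formal consequences of the flatness of the two boundary images $\im(d^{i-1})$ and $\im(d^i)$, so I do not anticipate further obstacles.
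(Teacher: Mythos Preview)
Your argument is correct and follows essentially the same route as the paper, namely Mumford's proof with \Cref{l:red-rigid-space-check-on-fibres} replacing the scheme-theoretic fibre criterion. One small slip: flatness of $\im(d^{i-1})$ is \emph{not} what makes the displayed short exact sequence stay exact after $-\otimes_AB$ (flatness of a submodule never guarantees this); the correct reason, which you do supply in the next clause, is that $\im(d^{i-1})$ is a direct summand of $P^i$, so $\im(d^{i-1})\otimes_AB\to\ker(d^i)\otimes_AB$ is injective because both sit inside $P^i\otimes_AB$. The paper organises the endgame in the reverse order---first proving $H^i(P^\bullet)$ projective by applying the cokernel argument to the induced map $d'^{i-1}:P^{i-1}\to\ker d^i$, then reading off base-change from the resulting decomposition $P^i=\im d^{i-1}\oplus H^i(P^\bullet)\oplus\im d^i$---but your ordering (base-change first, then projectivity via fibres) is an equally valid rearrangement of the same ingredients.
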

		\begin{proof}
			
			The lower-semicontinuity of the last two terms in  \Cref{eq:dim-estimate-cohom-bc} combines with the assumption  to show that the dimension of $\im(d^i\otimes_AC)$ is locally constant. 
			Thus the same is true for $\coker(d^i\otimes_AC)$. Using right-exactness of $-\otimes_AC$, it follows from \Cref{l:red-rigid-space-check-on-fibres} that $\coker d^i$ is projective. Hence
			\[P^{i+1}=\im d^i\oplus \coker d^i\]
			It follows that $\im d^i$ is projective, hence $P^i=\ker d^i\oplus \im d^i$.
			The same argument applied to $\im(d^{i-1}\otimes_AC)$ shows that \[P^{i-1}=\ker d^{i-1}\oplus \im d^{i-1}.\]
			
			Second, one applies the same reasoning to the map $d'^{i-1}:P^{i-1}\to \ker d^i$ induced by $d^{i-1}$: The dimension of $\im(d'^{i-1}\otimes_AC)=\im(d^{i-1}\otimes_AC)$ is constant, hence $\coker d'^{i-1}=H^i(P^\bullet)$ is projective.  We deduce that there is a splitting
			$\ker d^i=\im d^{i-1}\oplus H^i(P^\bullet)$. All in all, this yields a decomposition
			\[P^i= \im d^{i-1}\oplus H^i(P^\bullet)\oplus \im d^i.\]
			This shows that both $d^{i-1}:P^{i-1}\to P^i$ and $d^{i}:P^i\to P^{i+1}$ are compositions of a split projection with an isomorphism and a split inclusion. Hence $H^i(P^\bullet\otimes_AB)=H^i(P^\bullet)\otimes_AB$.
		\end{proof}
		As $R^if_{\ast}F$ is the coherent $\O_X$-module associated to the $A$-module $H^i(P^\bullet)$, this shows that $R^if_{\ast}F$ is locally free. By  \Cref{eq:bc-of-P-to-fibre},
	 \Cref{cl:cbc-constant-fibre-dimension} applied to the map $A\to C$  shows the desired isomorphism in \Cref{t:cohomology-and-base-change-rigid}.2. This finishes the proof of \Cref{t:cohomology-and-base-change-rigid}.
	\end{proof}

	\subsection{Cohomology and base change}
	Putting everything together, we can deduce the non-archimedean version of ``cohomology and base-change'' which is the main result of this section:
	
	\begin{Theorem}\label{t:cohomology-and-base-change-perfectoid}
		Let $K$ be any non-archimedean field over $\Z_p$ and let $C$ be the completion of an algebraic closure of $K$. Let
		\[\begin{tikzcd}
			X'\arrow[d,"g'"] \arrow[r,"f'"] & S' \arrow[d,"g"] \\
			X\arrow[r,"f"] &  S
		\end{tikzcd} \]
		be a Cartesian diagram of adic spaces over $K$
		where $f$ is a proper morphism of rigid spaces. Let $F$ be an $S$-flat coherent $\O_X$-module. Let $i\in \N$. Assume one of the following:
		\begin{enumerate}
			\item $S'$ is a rigid space, or
			\item $S'$ is sousperfectoid, $f$ is smooth and $F$ is locally free.
		\end{enumerate}
		Then the base-change map
		\[ g^{\ast}R^if_{\ast} F\to R^if'_{\ast}g'^{\ast} F\] 
		is an isomorphism in each of the following cases:
		\begin{enumerate}[label=(\alph*)]
			
			\item $S$ is reduced and the following function from  \Cref{t:cohomology-and-base-change-rigid}.1 is locally constant
			\[b_i: S(C)\to \mathbb Z,\quad s\mapsto \dim_{C}H^i( X_s,  F_s),\]
			\item $S'$ is a rigid space and $g$ is flat, or
			\item $S'$ is perfectoid, $g$ is a pro-\'etale morphism in $S_\proet$, $S$ is smooth and $\mathrm{char}(K)=0$.
		\end{enumerate}
	\end{Theorem}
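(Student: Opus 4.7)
My plan is to derive all three cases from a common affine analysis based on Theorem \ref{t:Kiehl-extended}. After localising on $S$ and then on $S'$, I take $S=\Spa(A)$ and $S'=\Spa(B)$ affinoid, so that Theorem \ref{t:Kiehl-extended} supplies a bounded complex $P^\bullet$ of finite projective $A$-modules together with an identification $R\Gamma(X\times_S S',g'^{\ast}F)=P^\bullet\widehat{\otimes}_A B$, while $R^i f_\ast F$ is the coherent sheaf on $S$ associated to the $A$-module $H^i(P^\bullet)$. The base-change map in question is thus identified with the natural comparison
\[\alpha^i:H^i(P^\bullet)\widehat{\otimes}_A B\longrightarrow H^i(P^\bullet\widehat{\otimes}_A B),\]
and the task in each of the three cases is to show that $\alpha^i$ is an isomorphism.

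Case (b) is quickest. Here $B$ is flat over $A$, and since each term of $P^\bullet$ is a finite projective $A$-module, the completed tensor product $\widehat{\otimes}_A B$ coincides with the ordinary tensor product $\otimes_A B$ (finite modules over an affinoid $K$-algebra carry a unique Banach topology, and $M\otimes_A B$ is already finite over $B$). Flatness then gives $\alpha^i$ at once.

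Case (a) follows directly from \Cref{cl:cbc-constant-fibre-dimension}. After shrinking $S$, that lemma exhibits $P^\bullet$ in degrees $i-1,i,i+1$ as a direct sum of contractible summands together with a single copy of the projective module $H^i(P^\bullet)$ in degree $i$, the differentials being identities on matching summands. This decomposition is preserved by $\widehat{\otimes}_A B$ (on each finite projective summand, completed and ordinary tensor agree), so $\alpha^i$ is an isomorphism; the argument works uniformly in settings (1) and (2).

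Case (c) is the main challenge. My plan is to reduce it to case (b) by a cofiltered-limit argument. Since $g$ is pro-\'etale in Scholze's flattened pro-\'etale site $S_\proet$ and $S'$ is perfectoid, one can write $S'\sim\varprojlim_j S_j$ as a cofiltered limit of affinoid \'etale neighbourhoods $S_j=\Spa(A_j)$ of $S$, with $B$ being the $\varpi$-adic completion of $A_\infty:=\varinjlim_j A_j$. Case (b) applies to each $g_j:S_j\to S$ (\'etale is flat), giving $H^i(P^\bullet)\otimes_A A_j\isomarrow H^i(P^\bullet\otimes_A A_j)$; passing to the filtered colimit preserves $H^i$ of bounded complexes and yields the analogous isomorphism over $A_\infty$. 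The remaining step, which I expect to be the main obstacle, is the passage from the uncompleted tensor product with $A_\infty$ to the completed one with $B$. For this I would exploit that $S$ is smooth and $\mathrm{char}(K)=0$, so that the fibre product $X\times_S S'$ is smoothoid by \Cref{ex:smoothoid-base-change}; combined with the local toric-chart structure of smoothoids and the acyclicity input built into Theorem \ref{t:Kiehl-extended} in setting (2) (which is available because $f$ is smooth and $F$ is a vector bundle in the hypotheses of case (c)), both sides of $\alpha^i$ should be identified with the $\varpi$-adic completion of the same $A_\infty$-module, allowing me to conclude.
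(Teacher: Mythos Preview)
Your treatment of cases (a) and (b) is correct and matches the paper's argument essentially verbatim: localise, apply \Cref{t:Kiehl-extended}, and for (a) invoke \Cref{cl:cbc-constant-fibre-dimension}, for (b) use flatness of $B$ over $A$.

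For case (c), however, there is a genuine gap at exactly the step you flag as ``remaining'': passing from the uncompleted colimit $A_\infty$ to its completion $B$. Your appeal to smoothoid structure and acyclicity does not address the actual difficulty. On the generic fibre $\varpi$ is invertible, so ``$\varpi$-adic completion'' is not the right notion; what is at stake is Banach completion, and there is no general reason why $A_\infty\to B$ should be flat or why $-\otimes_{A_\infty}B$ should commute with $H^i$ of the complex $P^\bullet\otimes_A A_\infty$. To control this one must go integral, and then one faces the question whether $H^i(P^{+,\bullet}\otimes_{A^+}A_\infty^+)$ has bounded $p$-torsion --- not at all obvious for an arbitrary pro-\'etale tower.

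The paper's approach to (c) is substantially different and more hands-on. It first splits off the \'etale part (handled by case (b)) to reduce to a pro-finite-\'etale cover. It then exploits smoothness of $S$ to pick a toric chart $S\to\mathbb T^d$ and base-changes everything along the explicit perfectoid tower $\mathbb T^d_\infty\to\mathbb T^d$, obtaining $S_\infty\to S$ and $S'_\infty\to S'$. The key integral input is that $R^+/p^n\to R_\infty^+/p^n$ and (by almost purity) $A_\infty^+/p^n\to B_\infty^+/p^n$ are \emph{almost} flat; via \Cref{p:commute-hotimes-and-H} this yields the desired commutation over $B_\infty$, together with the bounded-torsion control needed to iterate. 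The final descent from $B_\infty$ to $B$ uses a separate trick: the $\Z_p^d$-torsor $S'_\infty\to S'$ makes $B\to B_\infty$ module-split, so injectivity statements can be checked upstairs. None of these ingredients appears in your outline, and your colimit-then-complete plan does not supply a substitute for them.
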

	\begin{Remark}
		\begin{enumerate}
			\item 
	Note that in the case of assumption 1, we do not assume that $g$ is a morphism of rigid spaces, i.e.\ $S'$ could be a rigid space over any complete field extension $K'$ of $K$. In particular, this generalises \cite[
	Theorem~A.1.2]{Conrad_Ampleness} which is the case that $S'\to S$ is given by an extension of base fields $\Spa(K')\to \Spa(K)$.
	\item With more work, one can show that the assumption on $\mathrm{char}(K)$ in (c) is not necessary.
	\end{enumerate}
	\end{Remark}
	
	\begin{proof}
		As the statement is local, we may shrink $S$ and $S'$ so that \Cref{t:Kiehl-extended} applies. Then	part (b) is immediate from \Cref{t:Kiehl-extended}. Part (a) follows from \Cref{t:Kiehl-extended} combined with \Cref{cl:cbc-constant-fibre-dimension}.
		Finally, the case of (c) follows from the following technical lemma.
	\end{proof}
	
	\begin{Lemma}
		Let $S=\Spa(A,A^+)$ be a smooth rigid space that admits a standard-\'etale map to $\mathbb T^d=\Spa(K\langle T_1^{\pm 1},\dots,T_d^{\pm 1}\rangle)$ for some $d\in \N$. Let $P^\bullet$ be a bounded complex of finite free $A$-modules. Let $S'=\Spa(B,B^+)\to S$ be an affinoid perfectoid pro-\'etale cover in $S_\proet$. Then for any $i\in \Z$,
		\[ H^i(P^\bullet \otimes_AB)=H^i(P^\bullet)\otimes_AB.\]
	\end{Lemma}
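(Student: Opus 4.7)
The plan is to reduce the lemma to the assertion that $A\to B$ is flat, and then to establish this flatness by a comparison with the canonical perfectoid cover coming from the toric chart on $S$.

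First, I observe that the statement is equivalent to flatness of $B$ over $A$. One direction is immediate: if $A\to B$ is flat, then $-\otimes_A B$ is exact and commutes with taking cohomology of any complex. Conversely, applying the statement to 2-term complexes $[A^m\xrightarrow{f}A^n]$ shows that $-\otimes_A B$ preserves kernels of maps between finite free $A$-modules, and since $A$ is Noetherian this is enough to force flatness. More formally, the conclusion follows at once from the hyperTor spectral sequence
\[E_2^{p,q}=\operatorname{Tor}^A_{-q}(H^p(P^\bullet),B)\Rightarrow H^{p+q}(P^\bullet\otimes_A B),\]
which is concentrated on $q=0$ precisely when $B$ is $A$-flat. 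Thus the whole lemma reduces to that single question.

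Second, I would use the given standard-\'etale map $S\to\mathbb T^d$ to dominate $S'\to S$ by the canonical perfectoid cover arising from $\mathbb T^d_\infty:=\Spa(K\langle T_i^{\pm 1/p^\infty}\rangle,\O_K\langle T_i^{\pm 1/p^\infty}\rangle)$. Setting $\tilde S:=S\times_{\mathbb T^d}\mathbb T^d_\infty$ with global sections $\tilde A$, the space $\tilde S$ is affinoid perfectoid (being standard-\'etale over $\mathbb T^d_\infty$), and $A\to\tilde A$ is faithfully flat. The latter can be verified on integral models: $\O_K\langle T_i^{\pm 1/p^\infty}\rangle$ is the $\varpi$-adic completion of the free $\O_K\langle T_i^{\pm 1}\rangle$-module on the basis $\{T^J:J\in[0,1)^d\cap\Z[\tfrac{1}{p}]^d\}$, which gives flatness modulo each power of $\varpi$ and thus flatness after inverting $\varpi$; this is then preserved by the flat base change $\O(\mathbb T^d)\to A$. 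Now form the further base change $\tilde S':=S'\times_S\tilde S$, which is pro-\'etale over the perfectoid $\tilde S$, hence itself affinoid perfectoid, with global sections $\tilde B=B\hotimes_A\tilde A$. By faithfully flat descent along $A\to\tilde A$, the flatness of $A\to B$ reduces to the flatness of $\tilde A\to\tilde B$, a statement purely within the category of affinoid perfectoids.

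The main obstacle will be this last step: showing that a pro-\'etale morphism between affinoid perfectoid spaces induces a flat map on global sections. My intended route is via tilting to characteristic $p$, where the Frobenius-stable structure of perfectoid rings makes almost-flatness modulo $\varpi$ relatively accessible; inverting $\varpi$ then upgrades this to honest flatness. The hypothesis $\operatorname{char}(K)=0$ in the ambient theorem enters exactly to enable this tilting argument, which aligns with the paper's remark following the theorem that the characteristic hypothesis can be removed with additional work via a more direct perfectoid argument.
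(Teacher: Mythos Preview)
Your reduction to flatness of $A\to B$ is correct and clarifying: the lemma is indeed equivalent to this single assertion. But the descent step contains a real gap. Faithfully flat descent along $A\to\tilde A$ says that $B$ is $A$-flat iff the \emph{algebraic} tensor product $B\otimes_A\tilde A$ is $\tilde A$-flat, whereas your $\tilde B=\O(S'\times_S\tilde S)$ is the \emph{completed} tensor product $B\hat\otimes_A\tilde A$; since $\tilde A$ is an infinite orthonormalizable (not finite) $A$-module, these genuinely differ, so flatness of $\tilde A\to\tilde B$ does not feed into the descent. The same completion issue already affects your argument that $A\to\tilde A$ is flat: ``flat modulo each $\varpi^n$, hence flat after inverting $\varpi$'' is not automatic for non-finitely-generated modules and needs justification. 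Finally, your proposed route for $\tilde A\to\tilde B$ has the same defect: tilting and almost purity give almost-flatness mod $\varpi$, but the step ``inverting $\varpi$ then upgrades this to honest flatness'' is exactly the kind of passage-to-completion statement that fails without further input.

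The paper sidesteps honest flatness entirely. After first reducing to $g$ pro-finite-\'etale, it works integrally with almost mathematics: one has that $R^+/p^n\to R^+_\infty/p^n$ and (by almost purity) $A^+_\infty/p^n\to B^+_\infty/p^n$ are \emph{almost} flat, and a dedicated appendix result (\Cref{p:commute-hotimes-and-H}) shows that this, together with bounded $p$-torsion of $H^\ast(P^{+,\bullet})$, suffices to commute $H^i$ with the completed tensor product. The descent from $B_\infty$ down to $B$ is then handled not by fpqc descent but by the observation that $S'_\infty\to S'$ is a pro-finite-\'etale $\Z_p^d$-torsor of affinoid perfectoids, so $B\hookrightarrow B_\infty$ is $B$-module split; hence $-\otimes_A B$ injects into $-\otimes_A B_\infty$, which is all that is needed. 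Incidentally, this same splitting would repair your descent: it exhibits $B$ as an $A$-module direct summand of $B_\infty$, so if you can show $A\to A_\infty$ and $A_\infty\to B_\infty$ are honestly flat, you are done---but those two claims still require arguments beyond what you have sketched.
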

	\begin{proof} 
		Recall that any pro-\'etale map can always be written as a composition $g_1\circ g_2$ where $g_1$ is \'etale and $g_2$ is pro-finite-\'etale. Since the statement  for $g_1$ follows from \Cref{t:cohomology-and-base-change-perfectoid}.(b), we may thus assume that $g$ is pro-finite-\'etale and $S'$ is affinoid perfectoid. Write $\mathbb T^d=\Spa(R)$ and consider the pro-\'etale cover $\mathbb T^d_\infty=\Spa(R_\infty,R_\infty^+)\to \mathbb T^d$ where $R_\infty:=K\langle T_1^{\pm 1/p^\infty},\dots,T_d^{\pm 1/p^\infty}\rangle$. 
		Let $S_\infty=\Spa(A_\infty,A_\infty^+)\to S$ and $S'_\infty=\Spa(B_\infty,B_\infty^+)\to S'$ be the base-changes along this cover, so we have Cartesian diagrams
		\[
		\begin{tikzcd}
			S'_\infty \arrow[r] \arrow[d] & S_\infty \arrow[d]\arrow[r]  &\mathbb T^d_\infty \arrow[d] \\
			S' \arrow[r]                  & S \arrow[r]                  & \mathbb T^d.                 
		\end{tikzcd}\]
		We now first prove the  result for the cover $S_\infty\to S$: By \cite[Lemma~4.5]{Scholze_p-adicHodgeForRigid}, the kernel and cokernel of the natural map \[A^+\hotimes_{R^+}R^+_\infty\to A_\infty^+\]
		are killed by $\varpi^k$ for some $k$ (see also \cite[Lemma~2.14]{heuer-sheafified-paCS} for the claim about the kernel).
		
		Choose an integral model $P^{+,\bullet}$ of finite free $A^+$-modules such that $P^{+,\bullet}\tf=P^\bullet$. 
		As a consequence of \Cref{l:perfect-complex}.1, for any $j\in \N$, the $A$-module $H^{j}(P^{+,\bullet})$ is finitely presented, in particular it has bounded $p$-torsion.
		We now use that for any $n$, the map $R^+/p^n\to R^+_\infty/p^n$ is almost  flat:  By \Cref{p:commute-hotimes-and-H}.1, it follows from these properties that
		\[
		 H^i(P^\bullet \otimes_AA_\infty)=H^i(P^{+,\bullet} \hotimes_{R^+}R^+_\infty)\tf=H^i(P^{+,\bullet}) \hotimes_{R^+}R^+_\infty\tf=H^i(P^\bullet)\otimes_{A}A_\infty.\]
		We moreover note that this shows that $H^i(P^{+,\bullet} \hotimes_{R^+}R^+_\infty)\aeq H^i(P^{+,\bullet}) \hotimes_{R^+}R^+_\infty$ has bounded $p$-torsion, hence the same is true for $H^i(P^{+,\bullet} \otimes_{A^+}A^+_\infty)$.
		
		As a second step, we now consider the cover $S_\infty'\to S_\infty$. Since this is a pro-finite-\'etale cover of affinoid perfectoids, the map $A_\infty^+/p^n\to B_\infty^+/p^n$ is a colimit of almost finite \'etale maps by almost purity \cite[Theorem 7.9]{perfectoid-spaces}, hence it is almost flat. Again by \Cref{p:commute-hotimes-and-H}.1, we deduce from this and the fact that $H^i(P^{+,\bullet} \otimes_{A^+}A^+_\infty)$ has bounded $p$-torsion that
		\[H^i(P^\bullet\otimes_{A}B_\infty)=H^i(P^\bullet\otimes_{A}A_\infty\otimes_{A_\infty}B_\infty)=H^i(P^\bullet\otimes_{A}A_\infty)\otimes_{A_\infty}B_\infty=H^i(P^\bullet)\otimes_{A}B_\infty.\] 
		Slightly more precisely, by replacing $C^\bullet$ with the truncated complex $d^i:C^i\to C^{i+1}$, this shows that $-\otimes_AB_\infty$ commutes already with the formation of boundaries and cycles of $C^\bullet$.
		
		It remains to descend from $S'_\infty$ to $S'=\Spa(B)$. For this we use that $S_\infty'\to S'$ is an affinoid pro-finite-\'etale $\Z_p^d$-torsor of affinoid perfectoid spaces, so the map $B\to B_\infty$ is module-split. Hence for any $A$-module $M$, the map $M\otimes_AB\to M\otimes_AB_\infty$ is injective. It follows that for any injection $M_1\hookrightarrow M_2$ of $A$-modules, if $	M_1\otimes_AB_\infty \to M_2\otimes_AB_\infty$ is injective, then so is $M_1\otimes_AB \to M_2\otimes_AB$. Consequently, as $-\otimes_AB_\infty$ preserves all short exact sequences computing $H^i(P^\bullet)$ in terms of boundaries and cycles, this  implies that so does $-\otimes_AB$.
	\end{proof}
	\begin{Corollary}
		In the setting of \Cref{t:cohomology-and-base-change-perfectoid}, assume that $S$ is reduced. Then there is for each $i\in \N$ a dense Zariski-open locus $U\subseteq S$ over which $R^if_{\ast}F$ is finite locally free. Moreover, the base-change map in \Cref{t:cohomology-and-base-change-perfectoid} is an isomorphism for any morphism from a rigid space $S'\to S$ that factors through $U$.
	\end{Corollary}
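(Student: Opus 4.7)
The plan is to combine the upper semi-continuity of $b_i$ from \Cref{l:upper-semicts}.1 with the reducedness of $S$ to carve out a dense Zariski-open locus $U\subseteq S$ on which $b_i$ is locally constant, and then invoke case (a) of \Cref{t:cohomology-and-base-change-perfectoid} on $U$.

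Concretely, I would begin by decomposing $S$ into its irreducible components $S=\bigcup_\alpha Z_\alpha$, which for a rigid space is a locally finite family of Zariski-closed subspaces. Since $b_i$ takes values in $\N$ and is upper semi-continuous, its restriction to $Z_\alpha(C)$ attains a minimum $n_\alpha$; the subspace $W_\alpha\subseteq Z_\alpha$ cut out by $b_i=n_\alpha$ is then Zariski-open in $Z_\alpha$ by upper semi-continuity, and nonempty, hence dense in $Z_\alpha$. To pass from ``open in $Z_\alpha$'' to ``open in $S$'' I would excise the other components and set
\[V_\alpha:=W_\alpha\setminus \textstyle\bigcup_{\beta\neq\alpha}Z_\beta,\]
which is Zariski-open in $S$ and remains dense in $Z_\alpha$ since $Z_\alpha$ is not contained in the union of the other irreducible components. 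Taking $U:=\bigcup_\alpha V_\alpha$, which makes sense because the decomposition is locally finite, I obtain a dense Zariski-open subspace of $S$ on which $b_i$ is locally constant, equal to $n_\alpha$ on the piece $V_\alpha$.

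With $U$ in hand, I would next apply \Cref{t:cohomology-and-base-change-rigid} to the restricted morphism $f|_U\colon X\times_S U\to U$ (which is still proper, and where $F$ pulled back is still $U$-flat), concluding that $R^i(f|_U)_{\ast}(F|_{X\times_S U})$ is finite locally free. By the flat base-change case \Cref{t:cohomology-and-base-change-perfectoid}.(b) applied to the open immersion $U\hookrightarrow S$, this sheaf agrees with $(R^if_{\ast}F)|_U$, giving the first assertion. For the base-change claim, \Cref{t:cohomology-and-base-change-perfectoid}.(a) applies verbatim to any morphism of rigid spaces $S'\to S$ factoring through $U$, since the two hypotheses of that case --- that $S$ (now $U$) is reduced and that $b_i$ is locally constant --- are precisely what we arranged.

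The only point requiring any care is the construction of $U$, which rests on the local finiteness of the irreducible-component decomposition of a rigid space. Since the statement is local on $S$, one may shrink to affinoid opens where only finitely many components occur, so this is not a serious obstacle; everything else is purely formal once \Cref{l:upper-semicts} and \Cref{t:cohomology-and-base-change-perfectoid} are in place.
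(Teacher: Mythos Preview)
Your proposal is correct and follows essentially the same strategy as the paper: use upper semi-continuity of $b_i$ (\Cref{l:upper-semicts}.1) to produce an open locus where $b_i$ is locally constant, then invoke case (a) of \Cref{t:cohomology-and-base-change-perfectoid}. The paper's own proof is just two sentences and simply asserts that ``there is a non-empty Zariski-open locus $U$ where $\dim_C H^i(X_s,F_s)$ is constant'', then says case (a) gives everything; your argument is more careful in that you actually justify \emph{density} by working component-by-component and excising the other components, a point the paper glosses over. Your separate invocation of \Cref{t:cohomology-and-base-change-rigid} for local freeness and then case (b) for the open immersion is a mild detour---the paper folds both conclusions into the single appeal to case (a)---but it is not wrong.
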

	\begin{proof}
		By \cref{l:upper-semicts}.1, there is a non-empty Zariski-open locus $U$ where $\dim H^i_C(X_s,F_s)$ is constant. By \Cref{t:cohomology-and-base-change-perfectoid}.(a), this has the desired properties.
	\end{proof}
	
	\section{A limit version of the Primitive Comparison Theorem}
	
	Let $C$ be an algebraically closed non-archimedean field extension of $\Q_p$.
	For any smooth proper rigid space $X$ over $C$, Scholze constructed  a spectral sequence  \cite[Theorem 3.20]{ScholzeSurvey}
	\begin{equation}\label{eq:HTss}
		H^i_\an(X,\Omega^j_X(-j))\Rightarrow H^{i+j}_\et(X,\Q_p)\otimes_{\Q_p} C.
	\end{equation}
	As a preparation, we briefly recall Scholzes' strategy, which builds on the earlier work of Faltings \cite{faltings1988p}: The key idea is to realize \eqref{eq:HTss} as a Leray sequence for the morphism of sites
	\[ \nu:X_\proet \to X_\an.\]
	Indeed, Scholze proves that for any $j\geq 0$, there is a canonical isomorphism of $\O_{X}$-modules
	\begin{equation}\label{eq:Rnnu_astO}
		R^j\nu_{\ast} \wh \O_X=\Omega_X^j(-j).
	\end{equation}
	The second main ingredient is the Primitive Comparison   Theorem  \cite[Theorem~5.1]{Scholze_p-adicHodgeForRigid}, which says that the following natural morphism is an isomorphism:
	\begin{equation}\label{eq:PCT}
		H^n_\et(X,\Q_p)\otimes_{\Q_p}K\to H^n_\proet(X,\O)
	\end{equation}
	Together, these two statements combine to give the desired sequence \Cref{eq:HTss}.
	
	Let now $f:X\to S$ be a smooth proper morphism of rigid spaces over a perfectoid field extension of $\Q_p$. The main goal of this article is to construct a relative version of \Cref{eq:HTss} for the morphism $f$.
	As a preparation for this, we have already proved the relative analogue of \Cref{eq:Rnnu_astO}  in \cite{heuer-sheafified-paCS},  which we recalled in \Cref{p:prop28}. The goal of this section is to deduce from further Theorems of Scholze the following relative generalisation of \Cref{eq:PCT}:
	
	Throughout this section, let $K$ be any non-archimedean field over $\Q_p$. For any rigid space $Y$ over $K$, we set $\wh\Z_p:=\varprojlim_{n\in \N} \nu^{\ast}(\Z/p^n)$ where $\nu:Y_\proet\to Y_\an$ is the natural map.
	
	\begin{Theorem}\label{c:PCT-limit} 
		Let $f:X\to S$ be a smooth proper morphism of rigid spaces over $K$.
		\begin{enumerate}
			\item For any $n\in \N$, the following natural map is an isomorphism on $S_\proet$:
			\[ (R^nf_{\proet\ast}\wh\Z_p)\otimes_{\wh\Z_p} \wh\O_S\to R^nf_{\proet\ast}\wh\O_X.\]
			\item The $\wh \O_S$-module $R^nf_{\proet\ast}\wh\O_X$ is finite locally free on $S_\proet$.
			\item For any geometric point $s:\Spa(C,C^+)\to S$ with fibre $X_s:=X\times_Ss$, we have \[s^\ast(R^nf_{\proet\ast}\wh\O_X)=H^n_{\proet}(X_s,\wh\O_{X_s}).\]
		\end{enumerate}
	\end{Theorem}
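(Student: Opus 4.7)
The strategy is to reduce all three assertions to Scholze's absolute Primitive Comparison Theorem \cite[Theorem~5.1]{Scholze_p-adicHodgeForRigid} applied to geometric fibres, by passing to a pro-étale cover of $S$ on which $R^n f_{\proet\ast}\wh{\Z}_p$ becomes a constant lattice.

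\smallskip

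All three statements are local on $S_\proet$. By Scholze's Theorem~10.5.1 of \cite{ScholzeBerkeleyLectureNotes}, pro-étale locally on $S$ the sheaf $R^n f_{\proet\ast}\wh{\Z}_p\otimes_{\Z_p}\Q_p$ is a finite free $\wh{\Q}_p$-module. We may thus pass to an affinoid perfectoid pro-étale cover $\tilde S\to S$ over which $R^n f_{\proet\ast}\wh{\Z}_p$ is free of some rank $b_n$. By Example~\ref{ex:smoothoid-base-change}, $\tilde X:=X\times_S\tilde S$ is a smoothoid space over $\tilde S$. It then suffices to show that $R^n f_{\proet\ast}\wh{\O}_X|_{\tilde S}$ is finite free of rank $b_n$ and that the natural map from the LHS of Part~1 is an iso.

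\smallskip

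On the smoothoid $\tilde X$, Proposition~\ref{p:prop28} computes $R^j\nu_{\tilde X\ast}\wh{\O}_{\tilde X}=\wt\Omega^j_{\tilde X}$ as an analytic vector bundle. Using the commutative diagram of sites $\nu_{\tilde S}\circ f_\proet=f_\an\circ\nu_{\tilde X}$, the associated Leray spectral sequence yields
\[E_2^{ij}=R^i f_{\an\ast}\wt\Omega^j_{\tilde X}\ \Longrightarrow\ R^{i+j}f_{\proet\ast}\wh{\O}_X|_{\tilde S}\]
on $\tilde S_\an$. By Corollary~\ref{c:vanishing-in-degree>dim} applied to the smooth proper morphism $f$ and the vector bundle $\Omega^j_{X|S}$, these $E_2$-terms are computed by the perfect complex $P^\bullet_j$ of Proposition~\ref{t:Kiehl-extended}, and their fibres at a geometric point $s:\Spa(C,C^+)\to \tilde S$ recover $H^i_\an(X_s,\Omega^j_{X_s}\{-j\})$ since $\tilde X_s=X_s$ and $\wt\Omega^j_{\tilde X}|_{\tilde X_s}=\Omega^j_{X_s}\{-j\}$. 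Combining the degeneration of the absolute Hodge--Tate sequence due to Bhatt--Morrow--Scholze \cite[Theorem~1.7(ii)]{BMS} with Scholze's absolute PCT gives
\[\textstyle\sum\limits_{i+j=n}\dim_C H^i_\an(X_s,\Omega^j_{X_s})=\dim_C H^n_\proet(X_s,\wh{\O}_{X_s})=b_n.\]
So both sides of the map in Part~1 are finite free $\wh{\O}_{\tilde S}$-modules of rank $b_n$; the map is an iso because it recovers the absolute PCT at every geometric stalk of $\tilde S$. Descent along $\tilde S\to S$ proves Part~1. Part~2 is then immediate since the LHS of Part~1 is visibly a finite locally free pro-\'etale vector bundle, and Part~3 follows by pulling Part~1 back along $s:\Spa(C,C^+)\to S$ and invoking the absolute PCT for the fibre $X_s$.

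\smallskip

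\textbf{Main obstacle.} The principal technical difficulty lies in the spectral-sequence computation: one must verify that the Leray $E_2$-terms $R^if_{\an\ast}\wt\Omega^j_{\tilde X}$ on $\tilde S_\an$ are well-behaved and that their geometric fibres at $s\in \tilde S(C,C^+)$ recover the absolute Hodge cohomology $H^i_\an(X_s,\Omega^j_{X_s}\{-j\})$, and that the assembled spectral sequence converges correctly to $R^nf_{\proet\ast}\wh{\O}_X|_{\tilde S}$. This hinges on the cohomology and base-change results of Section~\ref{s:cohom-BC} applied to the sousperfectoid base change along $\tilde S\to S$, together with careful bookkeeping across the four sites $\tilde X_\proet,\tilde X_\an,\tilde S_\proet,\tilde S_\an$ and their interplay under $\lambda,\nu,\mu$. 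Once these compatibilities are in hand, the argument reduces to the numerical comparison via the absolute Hodge--Tate theorem.
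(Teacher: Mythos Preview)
Your approach is genuinely different from the paper's and, as written, has a real gap.

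The paper does \emph{not} go through the Leray/Hodge--Tate spectral sequence at all. Instead it uses the \emph{relative} mod-$p^k$ Primitive Comparison Theorem \cite[Theorem~5.1, Corollary~5.11]{Scholze_p-adicHodgeForRigid} (in its pro-\'etale form), which already gives an almost isomorphism
\[
R^nf_{\proet\ast}(\O_X^{+}/p^k)\ \stackrel{a}{=}\ R^nf_{\proet\ast}(\Z/p^k)\otimes \O^+_S/p^k
\]
on $S_\proet$ for each $k$. One then passes to the limit over $k$ (using \cite[Lemma~3.18]{Scholze_p-adicHodgeForRigid} to kill $R^j\varprojlim$) and inverts $p$. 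Parts~1 and~2 fall out immediately, and Part~3 is reduced to proper base change for \'etale $\Z/p^k$-cohomology. No Hodge cohomology, no base-change for $\Omega^j$, and no BMS degeneration enter the argument.

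Your route tries to control $R^nf_{\proet\ast}\wh\O_X$ via the Leray spectral sequence with $E_2^{ij}=R^if_{\an\ast}\wt\Omega^j_{\tilde X}$, but this runs into a circularity. You assert that the fibre of $R^if_{\an\ast}\wt\Omega^j_{\tilde X}$ at a geometric point $s$ is $H^i(X_s,\Omega^j_{X_s})$. This is exactly the base-change statement of \Cref{t:cohomology-and-base-change-perfectoid}.(a), which requires the fibre dimensions $\dim_C H^i(X_s,\Omega^j_{X_s})$ to be locally constant --- i.e.\ \Cref{p:Deligne-constant-fibre-dim}.1. In the paper, that constancy is \emph{deduced from} \Cref{c:PCT-limit}, so you cannot invoke it here. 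Without it, $H^i(P^\bullet_j)\otimes_A C$ and $H^i(P^\bullet_j\otimes_A C)$ need not agree, the $E_2$-terms need not be locally free, and you have no control over the abutment. Even if you supplied the upper-semicontinuity argument to get constancy directly (essentially reproving \Cref{p:Deligne-constant-fibre-dim}), you would still need degeneration of the \emph{relative} Leray sequence over $\tilde S$ to conclude the abutment is free of rank $b_n$ --- and that is \Cref{t:relHTSS}.3, again proved in the paper using \Cref{c:PCT-limit}. Finally, your last step (``the map is an iso because it recovers the absolute PCT at every geometric stalk'') presupposes that $s^\ast R^nf_{\proet\ast}\wh\O_X=H^n_\proet(X_s,\wh\O_{X_s})$, which is Part~3 of the very theorem you are proving.

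In short, the argument you sketch is the logic of \Cref{t:relHTSS} and \Cref{p:Deligne-constant-fibre-dim} run backwards; in the paper's architecture those results sit strictly downstream of \Cref{c:PCT-limit}. The missing input is the relative mod-$p^k$ comparison isomorphism, which bypasses all of this.
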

	For the proof we will need the following result due to Scholze--Weinstein:

	\begin{Theorem}[{\cite[Theorem 10.5.1]{ScholzeBerkeleyLectureNotes}}]\label{t:Sch10.5.1}
		Let $f:X\to S$ be a smooth proper morphism of rigid spaces over $K$. Let $\mathbb L$ be an  $\F_p$-local system on $X_\et$. Then  $R^nf_{\et\ast}\mathbb L$ is an $\F_p$-local system on $S_\et$ for any $n\in \N$.
	\end{Theorem}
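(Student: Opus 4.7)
The plan is to establish three properties in turn: (i) constructibility of $R^nf_{\et\ast}\mathbb{L}$; (ii) finiteness and fiberwise identification of its geometric stalks; and (iii) local constancy of the stalk ranks on $|S|$. Together these imply $R^nf_{\et\ast}\mathbb{L}$ is locally constant constructible with finite $\F_p$-stalks, i.e.\ an $\F_p$-local system on $S_\et$.

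First I would reduce to the case $\mathbb{L}=\F_p$: any $\F_p$-local system is trivialized on some finite \'etale cover $\pi:X'\to X$, so the Leray spectral sequence for $f\circ\pi$ reduces the problem to the constant-coefficient case for $f':X'\to S$, using that local systems are preserved by pushforward along finite \'etale maps. Next, Huber's proper base change for torsion coefficients on adic spaces identifies the geometric stalk $(R^nf_{\et\ast}\F_p)_{\bar s}$ at $\bar s:\Spa(C,C^+)\to S$ with $H^n_\et(X_{\bar s},\F_p)$, which is finite-dimensional by Scholze's finiteness theorem for smooth proper rigid spaces (a consequence of \eqref{eq:PCT} applied fiberwise). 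Combined with Huber's constructibility for proper morphisms with torsion coefficients, this gives (i) and (ii).

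The heart of the argument is (iii). I would work v-locally on $S$, passing to an affinoid perfectoid pro-\'etale cover $\widetilde S\to S$, and then tilt to reduce to characteristic $p$. In characteristic $p$, the Artin--Schreier sequence
\[
0\to \F_p \to \O \xrightarrow{\varphi-1} \O \to 0
\]
on $X_\et$ yields a distinguished triangle $Rf_{\et\ast}\F_p \to Rf_\ast\O \xrightarrow{\varphi-1} Rf_\ast\O$. Since $f$ is smooth proper, a characteristic-$p$ analogue of \Cref{t:cohomology-and-base-change-perfectoid} presents $Rf_\ast\O$ as a perfect complex of $\O_{\widetilde S^\flat}$-modules whose cohomology sheaves $R^if_\ast\O$ are finite locally free of locally constant rank. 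The $\varphi$-semilinear operator $\varphi-1$ on such a perfect complex then has kernel and cokernel that are \'etale-locally constant $\F_p$-sheaves of rank determined by the coherent ranks (using, after further \'etale localization, the classical fact that finite projective modules equipped with a Frobenius-semilinear automorphism split \'etale-locally into $\F_p$-rational points). Hence the $\F_p$-rank function on $\widetilde S^\flat$, and therefore on $S$, is locally constant.

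The main obstacle is the final descent: converting local constancy of stalk ranks on the perfectoid cover into the statement that $R^nf_{\et\ast}\F_p$ is itself an \'etale local system on $S$, not merely a v- or pro-\'etale sheaf with constant rank. Here I would combine the constructibility from step (ii) with the classical criterion that a constructible \'etale sheaf with locally constant stalk ranks on a geometrically unibranch base is a local system; the reduction to this situation can be arranged by stratifying $S$ into normal locally closed subspaces and applying the rank computation on each stratum.
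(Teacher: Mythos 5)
Your proposal does not follow the paper's route: the paper does not reprove this statement at all, but quotes Scholze--Weinstein \cite[Theorem~10.5.1]{ScholzeBerkeleyLectureNotes} in the case of an algebraically closed base field $C$, and deduces the general case in two lines, via the base-change identity $g^\ast R^nf_{\et\ast}\mathbb L=R^nf_{C,\et\ast}\mathbb L$ of \cite[Corollary~16.10.(ii)]{Sch18} (so the sheaf is pro-\'etale-locally constant) together with \cite[Corollary~16.10.(i)]{Sch18} (pro-\'etale-locally constant implies \'etale-locally constant). You instead attempt to reprove the cited theorem from scratch, and the attempt has genuine gaps.

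Concretely: first, the reduction to $\mathbb L=\F_p$ fails with $\F_p$-coefficients. A trivializing finite \'etale cover $\pi:X'\to X$ of an $\F_p$-local system has monodromy group inside $\mathrm{GL}_r(\F_p)$, whose order is in general divisible by $p$, so there is no trace splitting of $\mathbb L\to\pi_\ast\pi^\ast\mathbb L$; knowing the theorem for constant coefficients on $X'$ only controls $R^nf_{\et\ast}(\pi_\ast\F_p^r)$, not $R^nf_{\et\ast}\mathbb L$. Second, and more seriously, the heart of your step (iii) is unsound. Tilting does not produce a smooth proper morphism in characteristic $p$: the fibre product $X\times_S\widetilde S$ is smoothoid, not perfectoid, so it has no tilt, and there is no characteristic-$p$ family to which an Artin--Schreier plus coherent base-change argument could be applied. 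Even granting such a presentation, the claim that $\varphi-1$ on a perfect complex of finite locally free modules has \'etale-locally constant kernel and cokernel of rank determined by the coherent ranks requires the semilinear Frobenius to be \emph{bijective} (a unit-root condition), which is not available; indeed the literal characteristic-$p$ analogue of the statement is false (for a family of elliptic curves in characteristic $p$ the stalk rank of $R^1f_{\et\ast}\F_p$ jumps between ordinary and supersingular fibres), so any argument of the shape ``Artin--Schreier plus local freeness of $R^if_\ast\O$'' must break; the genuine input in Scholze--Weinstein's proof is the relative primitive comparison theorem. Finally, the appeal to ``Huber's constructibility for proper morphisms with torsion coefficients'' is exactly the kind of finiteness statement that is not available for $p$-torsion coefficients in this setting without perfectoid input, and the unibranch-stratification descent at the end would also need a rigid-analytic justification; the clean tool for that last step is \cite[Corollary~16.10.(i)]{Sch18}, as used in the paper.
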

	\begin{proof}
		Let $C$ be the completion of an algebraic closure of $K$.
		Scholze--Weinstein prove this Theorem in the case that $K=C$. The general case is easy to deduce: Let $f_C:X_C\to S_C$ be the base-change and let $g:S_C\to S$ be the natural map, then by \cite[Corollary~16.10.(ii)]{Sch18},
		\[g^\ast R^nf_{\et\ast}\mathbb L=R^nf_{C,\et\ast}\mathbb L.\]
		This shows that $R^nf_{\et\ast}\mathbb L$  is pro-\'etale-locally constant. By  \cite[Corollary~16.10.(i)]{Sch18}, this implies that it is \'etale-locally constant. 
	\end{proof}
	\begin{proof}[Proof of \Cref{c:PCT-limit}]
		It follows from \Cref{t:Sch10.5.1} that  $R^nf_{\et\ast}(\Z/p^k)$ is an \'etale-locally constant $\Z/p^k$-module of finite type for any $k\in \N$.
		Consider the morphism $\omega:S_{\proet}\to S_{\et}$. By \cite[Corollary~3.17]{Scholze_p-adicHodgeForRigid},
		\[	R^nf_{\proet\ast}(\Z/p^k)=\omega^{\ast}R^nf_{\et\ast}(\Z/p^k)\]
		is still constant on a finite \'etale cover. Since sequential limits of finite \'etale maps exist in the pro-\'etale site, this shows that there is a pro-\'etale cover $S'\to S$ in $S_\proet$ on which  $R^nf_{\proet\ast}(\Z/p^k)$ becomes constant for all $k\in \N$. After passing to a further cover, we may assume that $H^j(S',\Z/p^k)=0$ for all $j>1$ and $k\in \N$: Indeed, for $j=1$ this can be achieved by forming the limit over all finite \'etale covers of $p$-power degree with a fixed choice of base-point. For $j>1$, it can be achieved by making $S'$ affinoid perfectoid, which ensures that  $H^j(S',\F_p)=0$ by the Artin--Schreier sequence.
		We can therefore invoke \cite[Lemma~3.18]{Scholze_p-adicHodgeForRigid} to see that for any $j\geq 1$,
		\[R^j\plim_{k\in \N} 	R^nf_{\proet\ast}(\Z/p^k) =0.\]
		It follows by a Milnor spectral sequence that 
		$R^nf_{\proet\ast}\wh\Z_p=\plim_{k\in \N} R^nf_{\proet\ast}(\Z/p^k)$
		is pro-\'etale-locally the pro-constant sheaf associated to a finitely generated $ \Z_p$-module. In particular, 
		 $R^nf_{\proet\ast}\wh\Z_p\tf$ is a finite locally free $\wh \Q_p:=\wh\Z_p\tf$-module on $X_\proet$.
		
		We now use the Primitive Comparison Theorem \cite[Theorem~5.1, Corollary~5.11]{Scholze_p-adicHodgeForRigid}, in its pro-\'etale variant \cite[Theorem~4.2]{heuer-PCT-char-p}, which says that for every $k\in \N$,
		\[R^nf_{\proet\ast}(\O_X^{+}/p^k)\aeq R^nf_{\proet\ast}\Z/p^k\otimes \O^+_S/p^k.\]
		 Using again  \cite[Lemma~3.18]{Scholze_p-adicHodgeForRigid}, it follows that also $R^j\varprojlim_{k\in \N}R^nf_{\proet\ast}\O^{+a}_X/p^k$ vanishes for $j\geq 1$: Indeed, on the same basis of spaces $S'$ as above, \[(R^nf_{\proet\ast}\O^{+a}_X/p^k)_{|S'}=(R^nf_{\proet\ast}\Z/p^k)_{|S'}\otimes \O^+/p^k\]  is a constant sheaf tensored with the almost acyclic sheaf $\O^+/p^k$. All in all, we deduce that
		\[
			R^nf_{\proet\ast}\wh\O^+_X\aeq \plim_{k} R^nf_{\proet\ast}(\O^+_X/p^k)\aeq \plim_{k} (R^nf_{\proet\ast}\Z/p^k)\otimes\O^+_S/p^k= R^nf_{\proet\ast}\wh\Z_p\otimes \wh\O_S^+.\]
		After inverting $p$, since $R^nf_{\proet\ast}\wh\Q_p$ is a finite locally free $\wh \Q_p$-module, we see that $R^nf_{\proet\ast}\wh\O$ is a pro-\'etale locally free $\wh\O_S$-module. In other words, it is a vector bundle on $X_\proet$.
		
		Finally, for part 3, we use that for any geometric point $s:\Spa(C,C^+)\to S$, the induced morphism of topoi $s:\Spa(C,C^+)^{\sim}_\proet\to S_\proet^{\sim}$  of \cite[Proposition~3.13]{Scholze_p-adicHodgeForRigid} commutes with limits, which follows from \cite[Lemma~14.4]{Sch18}. By part 1, it therefore suffices to prove that \[s^{-1}(R^nf_{\proet\ast}\Z/p^k)=H^n(X_s,\Z/p^k).\] 
		This follows from \cite[Proposition~2.6.1]{huber2013etale} since $s^{-1}(R^nf_{\proet\ast}\Z/p^k)=(R^nf_{\et\ast}\Z/p^k)_s$. 
	\end{proof}
	By the same argument, we obtain a v-topological version of \Cref{c:PCT-limit}: 
	
	\begin{Corollary}\label{c:PCT-limit-vversion}
		The sheaf $R^nf_{v\ast}\O$ on $S_v$ is a v-vector bundle and we have an isomorphism
		\[ R^nf_{v\ast}\wh \Z_p\otimes_{\wh \Z_p}\O= R^nf_{v\ast}\O.\]
	\end{Corollary}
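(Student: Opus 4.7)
The plan is to run the argument of \Cref{c:PCT-limit} essentially verbatim, with the v-site replacing the pro-\'etale site. The three main inputs all have immediate v-analogues once one works on the basis of affinoid perfectoid v-covers of $S$.

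First, for any \'etale sheaf $\mathcal F$ on $X$ and smooth proper $f$, one has $R^nf_{v\ast}\eta^\ast\mathcal F=\eta^\ast R^nf_{\et\ast}\mathcal F$; combined with \Cref{t:Sch10.5.1}, this shows that $R^nf_{v\ast}(\Z/p^k)$ is an \'etale-locally constant constructible sheaf. Passing to an affinoid perfectoid v-cover $S'\to S$ we may therefore assume that $R^nf_{v\ast}(\Z/p^k)$ becomes constant for all $k\in \N$. On an affinoid perfectoid the Artin--Schreier sequence gives $H^j_v(S',\F_p)=0$ for $j\geq 1$, and induction on $k$ then yields $H^j_v(S',\Z/p^k)=0$ for $j\geq 1$. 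The v-analogue of \cite[Lemma~3.18]{Scholze_p-adicHodgeForRigid} then implies $R^j\plim_k R^nf_{v\ast}(\Z/p^k)=0$ for $j\geq 1$, so the Milnor spectral sequence identifies
\[R^nf_{v\ast}\wh\Z_p=\plim_{k\in\N}R^nf_{v\ast}(\Z/p^k),\]
which is v-locally the pro-constant sheaf associated to a finitely generated $\Z_p$-module. In particular $R^nf_{v\ast}\wh\Z_p\tf$ is a v-locally finite free $\wh\Q_p$-module.

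Second, we invoke the v-topological Primitive Comparison Theorem (in the form of \cite[Theorem~4.2]{heuer-PCT-char-p}, or deduced from its pro-\'etale version by pullback along $\mu:X_v\to X_\proet$ since $X$ and $S$ are locally Noetherian), which gives almost isomorphisms
\[R^nf_{v\ast}(\O^{+}_X/p^k)\aeq R^nf_{v\ast}(\Z/p^k)\otimes \O^{+}_S/p^k.\]
On the same basis of affinoid perfectoids $S'$, where $\O^{+}/p$ is almost acyclic and the right-hand side is almost a constant sheaf tensored with $\O^{+}_S/p^k$, a second application of \cite[Lemma~3.18]{Scholze_p-adicHodgeForRigid} and the Milnor spectral sequence yields
\[R^nf_{v\ast}\O^{+}_X\aeq R^nf_{v\ast}\wh\Z_p\otimes \O^{+}_S.\]
Inverting $p$ and using $\O=\O^{+}\tf$ on the v-site produces the asserted identification $R^nf_{v\ast}\O=R^nf_{v\ast}\wh\Z_p\otimes_{\wh\Z_p}\O$, and this is a v-vector bundle because the first factor is v-locally finite free over $\wh\Q_p$.

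The only real subtlety is ensuring that each pro-\'etale input of \Cref{c:PCT-limit} is available verbatim in the v-topology. Each reduces on the basis of affinoid perfectoids to a fact already used in the pro-\'etale argument: Scholze--Weinstein for \'etale local constancy of $R^nf_{\et\ast}(\Z/p^k)$, Artin--Schreier for vanishing of higher $\F_p$-cohomology on affinoid perfectoids, and the v-PCT for the torsion comparison. Alternatively, since $X$ and $S$ are locally Noetherian, everything can be obtained by applying $\mu^\ast$ to the pro-\'etale statement of \Cref{c:PCT-limit} and checking that the relevant base-change maps are isomorphisms on affinoid perfectoids.
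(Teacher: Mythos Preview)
Your proposal is correct and follows the same overall strategy as the paper's proof. The one substantive difference is in how you establish the vanishing $R^j\varprojlim_k R^nf_{v\ast}(\Z/p^k)=0$ for $j>0$: you reproduce the pro-\'etale argument of \Cref{c:PCT-limit} (Artin--Schreier vanishing on affinoid perfectoids plus a v-analogue of \cite[Lemma~3.18]{Scholze_p-adicHodgeForRigid}), whereas the paper simply observes that the v-topos $S_v$ is replete, so derived limits of these towers vanish directly without needing to construct special covers or invoke Lemma~3.18. Your route works, but the repleteness shortcut is the cleaner way to organise this in the v-setting. The paper also explicitly names \cite[Corollary~5.5]{heuer-PCT-char-p} as the replacement for \cite[Corollary~3.17]{Scholze_p-adicHodgeForRigid}; this is exactly the base-change identity you use in your first paragraph.
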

	\begin{proof}
		Exactly as in \Cref{c:PCT-limit}, replacing \cite[Corollary~3.17]{Scholze_p-adicHodgeForRigid} by \cite[Corollary~5.5]{heuer-PCT-char-p}, and using repleteness of $S_v$ to deduce directly that $R^j\plim_kR^nf_{v\ast}\Z/p^k=0$ for $j>0$.
	\end{proof}
	\begin{Lemma}\label{l:compare-proet-v-Zp}
		Let $f: X\to  S$ be a smooth proper morphism of rigid spaces over $K$. Then
		\[R^nf_{v\ast}\wh \Z_p=\mu^\ast R^nf_{\proet\ast}\wh \Z_p = (\plim_k\eta^\ast R^nf_{\et\ast}\Z/p^k)\otimes_{\wh\Z_p} \O \]
		for any $n\in \N$, where $\mu:S_v\to S_\proet$  and $\eta:X_v\to X_\et$  are the natural maps.
	\end{Lemma}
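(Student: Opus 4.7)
The plan is to bootstrap from the corresponding mod-$p^k$ statements, pass to the inverse limit, and match this with the identifications already established in \Cref{c:PCT-limit} and \Cref{c:PCT-limit-vversion}. By \Cref{t:Sch10.5.1}, each $R^nf_{\et\ast}\Z/p^k$ is étale-locally constant with finitely generated stalks. Combining this with \cite[Corollary~3.17]{Scholze_p-adicHodgeForRigid} (applied exactly as in the proof of \Cref{c:PCT-limit}) and its v-topological analogue \cite[Corollary~5.5]{heuer-PCT-char-p} (used in the proof of \Cref{c:PCT-limit-vversion}), I obtain
\[
\omega^\ast R^nf_{\et\ast}\Z/p^k = R^nf_{\proet\ast}\Z/p^k,\qquad \eta^\ast R^nf_{\et\ast}\Z/p^k = R^nf_{v\ast}\Z/p^k.
\]
Since $\eta=\omega\circ\mu$, this also gives $\mu^\ast R^nf_{\proet\ast}\Z/p^k = R^nf_{v\ast}\Z/p^k$ for every $k\in \N$.

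Next, I would pass to the limit over $k$. On $S_v$, repleteness forces $R^j\plim_k R^nf_{v\ast}\Z/p^k=0$ for $j>0$, as in the proof of \Cref{c:PCT-limit-vversion}, and a Milnor spectral sequence then yields $R^nf_{v\ast}\wh\Z_p = \plim_k R^nf_{v\ast}\Z/p^k$. Combined with the first step this produces $R^nf_{v\ast}\wh\Z_p = \plim_k \eta^\ast R^nf_{\et\ast}\Z/p^k$. On the pro-étale side, the analogous identity $R^nf_{\proet\ast}\wh\Z_p = \plim_k R^nf_{\proet\ast}\Z/p^k$ was already established in the proof of \Cref{c:PCT-limit} via \cite[Lemma~3.18]{Scholze_p-adicHodgeForRigid}.

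The one step that requires genuine care, and which I regard as the main obstacle, is showing that $\mu^\ast$ commutes with this inverse limit, so that $\mu^\ast R^nf_{\proet\ast}\wh\Z_p=\plim_k \mu^\ast R^nf_{\proet\ast}\Z/p^k$; this is not automatic because $\mu^\ast$ is a left adjoint and need not preserve arbitrary inverse limits. To handle it, I would work locally on the pro-étale cover $S'\to S$ produced in the proof of \Cref{c:PCT-limit}, chosen so that every $R^nf_{\proet\ast}\Z/p^k$ restricts on $S'_\proet$ to the constant pro-sheaf associated to a finitely generated $\Z/p^k$-module. On such constant sheaves $\mu^\ast$ is given by the same underlying module regarded as a v-sheaf on $S'_v$, so it manifestly commutes with the inverse limit over $k$. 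Sheafifying yields the global identification $\mu^\ast R^nf_{\proet\ast}\wh\Z_p=R^nf_{v\ast}\wh\Z_p$ and completes the chain of equalities; tensoring with $\O$ and invoking \Cref{c:PCT-limit-vversion} then gives the variant involving $\otimes_{\wh\Z_p}\O$.
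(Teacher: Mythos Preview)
Your proposal is correct and follows the same route as the paper: reduce to the mod-$p^k$ statement via \cite[Corollary~5.5]{heuer-PCT-char-p}, then pass to the inverse limit. The paper's proof is a single sentence to this effect, whereas you have carefully spelled out the limit step (in particular the commutation of $\mu^\ast$ with $\varprojlim_k$), which is a reasonable elaboration but not a different strategy.
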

	\begin{proof}
		For $\Z/p^n\Z$, this holds by \cite[Corollary~5.5]{heuer-PCT-char-p}. The lemma follows in the limit.
	\end{proof}
	\begin{Corollary}\label{c:comp-proet-v-cohom}
		We have 
		$\mu^{\ast}R^nf_{\proet\ast} \wh\O_X=R^nf_{v\ast}\O.$
	\end{Corollary}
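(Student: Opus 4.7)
The plan is to combine the three immediately preceding results, \Cref{c:PCT-limit}, \Cref{c:PCT-limit-vversion}, and \Cref{l:compare-proet-v-Zp}, by pulling back the identification from the pro-\'etale site to the v-site along $\mu:S_v\to S_\proet$. The rough chain of equalities I will carry out is
\[
\mu^{\ast}R^nf_{\proet\ast}\wh\O_X
\;\simeq\; \mu^{\ast}\bigl((R^nf_{\proet\ast}\wh\Z_p)\otimes_{\wh\Z_p}\wh\O_S\bigr)
\;\simeq\; R^nf_{v\ast}\wh\Z_p\otimes_{\wh\Z_p}\O
\;\simeq\; R^nf_{v\ast}\O,
\]
where the first step is \Cref{c:PCT-limit}.1, the third is \Cref{c:PCT-limit-vversion}, and the middle step contains all the work.

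First, I apply $\mu^{\ast}$ to the isomorphism in \Cref{c:PCT-limit}.1. Since pullback is symmetric monoidal, this gives
\[
\mu^{\ast}R^nf_{\proet\ast}\wh\O_X
\;=\; (\mu^{\ast}R^nf_{\proet\ast}\wh\Z_p)\otimes_{\mu^{\ast}\wh\Z_p}\mu^{\ast}\wh\O_S.
\]
By \Cref{l:compare-proet-v-Zp} the first factor is canonically $R^nf_{v\ast}\wh\Z_p$, and the same lemma (applied to the trivial case of the constant sheaves $\Z/p^k$ on $S$) identifies $\mu^{\ast}\wh\Z_p$ with $\wh\Z_p$ on $S_v$. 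So the real task is to identify $\mu^{\ast}\wh\O_S$ with the v-structure sheaf $\O$.

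For this identification I will argue on a basis. The affinoid perfectoid objects $U=\Spa(A,A^+)$ pro-\'etale over $S$ form a basis for $S_\proet$, and on such $U$ one has $\wh\O_S(U)=A$ because $A$ is already $p$-adically complete and $\wh\O_S=\varprojlim_k \O^+/p^k\tf$. Since these same $U$ are also a basis of $S_v$ (they define v-covers of themselves), the sheafification defining $\mu^{\ast}\wh\O_S$ has the same values as $\O$ on this common basis, which gives the desired isomorphism $\mu^{\ast}\wh\O_S=\O$ on $S_v$. Combining these identifications with \Cref{c:PCT-limit-vversion} then yields the claim.

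The only subtle point is the compatibility of the completed structure sheaves under $\mu^{\ast}$; once this is in place, the rest is a formal assembly of the earlier results. (Alternatively, one could avoid the explicit identification of $\mu^{\ast}\wh\O_S$ by constructing the base-change map $\mu^{\ast}R^nf_{\proet\ast}\wh\O_X\to R^nf_{v\ast}\O$ directly from functoriality, and then using the finite local freeness of $R^nf_{\proet\ast}\wh\Z_p$ (a form of \Cref{c:PCT-limit}.2) together with \Cref{c:PCT-limit-vversion} to check it is an isomorphism on the same affinoid perfectoid basis.)
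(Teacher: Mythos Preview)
Your proof is correct and follows exactly the paper's one-line approach: apply $\mu^{\ast}$ to \Cref{c:PCT-limit}.1, invoke \Cref{l:compare-proet-v-Zp}, and conclude via \Cref{c:PCT-limit-vversion}. One small remark: since $\mu$ is set up as a morphism of \emph{ringed} sites, the identification $\mu^{\ast}\wh\O_S=\O$ is tautological for module pullback, so your basis argument is unnecessary (and the claim that pro-\'etale affinoid perfectoids over $S$ form a basis of $S_v$ is not quite right as stated, since $S_v$ contains arbitrary perfectoid spaces over $S$).
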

	\begin{proof}
		This follows from comparing \Cref{c:PCT-limit}.1 and \Cref{c:PCT-limit-vversion} via \Cref{l:compare-proet-v-Zp}.
	\end{proof}
	
	\section{The relative $p$-adic Hodge--Tate sequence}

	We now put everything together to construct the following relative generalisation of the Hodge--Tate spectral sequence \Cref{eq:HTss}, which is the main result of this article:
	
	\begin{Theorem}\label{t:relHTSS}
	Let $K$ be a perfectoid field extension of $\Q_p$.
	Let $f: X\to  S$ be a smooth proper morphism of reduced rigid spaces over $K$. Let $\nu:X_\proet\to X_\an$ be the natural morphism.
	\begin{enumerate}
		\item  There is a first quadrant spectral sequence of sheaves on $ S_\proet$
		\begin{equation}\label{eq:rel-HT-in-5}
		E_2^{ij}=\nu^{\ast}R^if_{\an\ast}\Omega^j_{ X| S}\{-j\}\Rightarrow (R^{i+j}f_{\proet\ast}\wh\Z_p)\otimes_{\wh\Z_p} \wh\O_S
		\end{equation}
			If $f$ has pure dimension $d$, this sequence is concentrated in degrees $i,j\in [0,d]$.
		\item The $R^if_{\an\ast}\Omega^j_{ X| S}\{-j\}$ are analytic vector bundles, whereas  $(R^{i+j}f_{\proet\ast}\wh \Z_p)\otimes_{\wh\Z_p} \wh\O_S$ is a pro-\'etale vector bundle.
		\item The spectral sequence \Cref{eq:rel-HT-in-5} degenerates at the $E_2$-page.
		\item  The  spectral sequence \Cref{eq:rel-HT-in-5} is natural in $f$. Namely, for any commutative diagram
		\[
		\begin{tikzcd}
			Y \arrow[d] \arrow[r,"g"] &  T \arrow[d,"h"] \\
			X \arrow[r,"f"]            &  S           
		\end{tikzcd}\]
		where $g$ is a smooth proper morphism of rigid spaces over any perfectoid field $K'|K$, there is a canonical morphism $h^\ast E(f)\to E(g)$ between the associated spectral sequences, compatible with  composition. It is an isomorphism if the square is Cartesian.
	\end{enumerate}
	\end{Theorem}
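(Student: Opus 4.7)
The plan is to construct the spectral sequence as a Leray spectral sequence after base-change to affinoid perfectoid pro-\'etale covers $U\to S$ in $S_\proet$, and to assemble these into a spectral sequence of sheaves on $S_\proet$. The main local identifications come from \Cref{p:prop28} (smoothoid cohomology) for the $E_2$-page, and from \Cref{c:PCT-limit} (relative primitive comparison) for the abutment, with \Cref{t:cohomology-and-base-change-perfectoid} together with \Cref{t:deligne-intro} supplying the required base-change compatibility.

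\emph{Construction of part~1.} For any affinoid perfectoid $U=\Spa(R,R^+)\to S$ in $S_\proet$ with structure map $h_S:U\to S$, the fibre product $X_U:=X\times_S U$ is a smoothoid adic space by \Cref{ex:smoothoid-base-change}, with projections $h_X:X_U\to X$ and $f_U:X_U\to U$. I apply the Leray spectral sequence to the composition $\lambda_U\circ f_{U,v}=f_{U,\an}\circ\lambda_{X_U}$ of morphisms of ringed sites, obtaining
\[E_2^{ij}=R^if_{U,\an\ast}\bigl(R^j\lambda_{X_U\ast}\O_{X_U}\bigr)\Rightarrow R^{i+j}(f_{U,\an}\circ\lambda_{X_U})_\ast\O_{X_U}.\]
Now \Cref{p:prop28} gives $R^j\lambda_{X_U\ast}\O_{X_U}=h_X^\ast\Omega^j_{X|S}\{-j\}$, and \Cref{t:deligne-intro} supplies the local constancy hypothesis required to invoke \Cref{t:cohomology-and-base-change-perfectoid}.(a), yielding the base-change isomorphism $R^if_{U,\an\ast}h_X^\ast\Omega^j_{X|S}\{-j\}=h_S^\ast R^if_{\an\ast}\Omega^j_{X|S}\{-j\}$. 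For the abutment, \Cref{c:comp-proet-v-cohom} together with \Cref{c:PCT-limit}.(1) identifies $R^n(f_{U,\an}\circ\lambda_{X_U})_\ast\O_{X_U}$ with the restriction to $U$ of $(R^nf_{\proet\ast}\wh\Z_p)\otimes_{\wh\Z_p}\wh\O_S$. As $U$ ranges over a basis of $S_\proet$, these local spectral sequences are functorial in $U$ and hence glue to a spectral sequence of sheaves on $S_\proet$ with the asserted $E_2$-page and abutment.

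\emph{Parts 2, 4, 3.} Part 2 is immediate: the $E_2$-terms are pullbacks of analytic vector bundles by \Cref{t:deligne-intro}, and the abutment is a pro-\'etale vector bundle by \Cref{c:PCT-limit}.(2). Part 4 is manifest from the construction, since every step above is functorial; the base-change statement for Cartesian squares reduces to the base-change compatibility of each ingredient. Given parts 2 and 4, the degeneration in part 3 reduces to the absolute case: for any geometric point $s:\Spa(C,C^+)\to S$, part 4 combined with \Cref{c:PCT-limit}.(3) identifies the pullback of the spectral sequence along $s$ with the absolute Hodge--Tate spectral sequence of the fibre $X_s$, which degenerates at $E_2$ by \cite[Theorem~1.7.(ii)]{BMS}. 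Since the differentials $d_r$ are morphisms of pro-\'etale vector bundles on $S_\proet$ vanishing at every geometric point, they vanish identically.

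\emph{Main obstacle.} The principal difficulty is the careful glueing of the local Leray spectral sequences into a single spectral sequence on $S_\proet$: each of the identifications (the Leray edge maps, the smoothoid cohomology from \Cref{p:prop28}, the base-change isomorphism, and the primitive comparison from \Cref{c:PCT-limit}) must be canonical and mutually compatible under restriction as $U$ varies. A secondary subtlety is that the vanishing of $d_r$ at all geometric points has to be upgraded to vanishing as a morphism of pro-\'etale vector bundles on $S_\proet$, which follows from the pointwise nature of such bundles over reduced bases.
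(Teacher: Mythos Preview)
Your construction of parts 1, 2, and 4 follows the paper's approach essentially verbatim; the only cosmetic difference is that you phrase the local Leray sequence on $X_U$ via the v-site morphism $\lambda$, whereas the paper uses $\nu$ on the pro-\'etale site directly---these agree by \Cref{c:comp-proet-v-cohom}. (Your formulation in terms of sheaves on $U_\an$ that then ``glue'' is slightly roundabout; the paper more simply takes the Leray sequence for $X'_\proet\to X'_\an$ as a spectral sequence of abelian groups, functorial in $S'\in S_\proet$, and sheafifies.)

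For part 3 (degeneration) you take a different and shorter route. The paper first treats the case where $S$ is smooth: there $\nu_\ast\wh\O_S=\O_S$, so applying $\nu_\ast$ transports the differential to a morphism of \emph{analytic} vector bundles on $S_\an$, whose vanishing can be checked on classical points via the Jacobson property and the absolute case. The general reduced $S$ is then handled by pulling back along a v-cover $S'\to S$ with $S'$ smooth, which exists by \cite[Corollary~2.4.8]{Guo}. Your argument instead stays on $S_\proet$: restricting to any affinoid perfectoid $U\to S$ trivialising the bundles, the differential becomes a matrix in $\O(U)$; since perfectoid Tate rings over $\Q_p$ are uniform and hence reduced, vanishing at all points of $|U|$ (each of which underlies a geometric point of $S$, where the pullback degenerates by BMS) forces the matrix to be zero. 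This is correct and avoids the appeal to Guo's result. Two small points you should tighten: your phrase ``pointwise nature over reduced bases'' obscures the actual mechanism---it is the reducedness of $\O(U)$ for perfectoid $U$, not the reducedness of $S$, that is used here; and the claim that $d_r$ is a morphism of pro-\'etale vector bundles requires an induction on $r$, since $E_r^{ij}$ is only known to be locally free once $d_2=\cdots=d_{r-1}=0$ has been established.
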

	\begin{Corollary}
		Let $n\in \N$.
	There exists on the pro-\'etale $\wh\O_S$-module $(R^{n}f_{\proet\ast}\wh\Z_p)\otimes_{\wh\Z_p} \wh\O_S$ a natural ``Hodge filtration'' with graded pieces isomorphic to  $\nu^{\ast}R^if_{\an\ast}\Omega^j_{ X| S}\{-j\}$ for $i+j=n$.
	\end{Corollary}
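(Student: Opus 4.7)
The plan is to obtain the Hodge filtration simply as the abutment filtration of the spectral sequence \eqref{eq:rel-HT-in-5} from \Cref{t:relHTSS}. Recall that any first-quadrant cohomological spectral sequence $E_2^{ij} \Rightarrow H^{i+j}$ in an abelian category equips each abutment $H^n$ with a canonical decreasing filtration $F^\bullet H^n$ of length $n+1$, whose associated graded pieces are $\mathrm{gr}^i F^\bullet H^n = E_\infty^{i,n-i}$. I would apply this formalism in the abelian category of $\wh\O_S$-modules on $S_\proet$, taking $H^n := (R^n f_{\proet\ast}\wh\Z_p) \otimes_{\wh\Z_p} \wh\O_S$ and $F^\bullet H^n$ to be the resulting filtration.

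Next, I would invoke \Cref{t:relHTSS}.3, which guarantees degeneration at the $E_2$-page, so that the natural maps $E_2^{ij} \to E_\infty^{ij}$ are isomorphisms for all $i,j$. Combining this with the identification of $E_2$-terms in \Cref{t:relHTSS}.1 gives canonical isomorphisms
\[ \mathrm{gr}^i F^\bullet H^n \;=\; E_\infty^{i,n-i} \;=\; E_2^{i,n-i} \;=\; \nu^{\ast} R^i f_{\an\ast} \Omega^{n-i}_{X|S}\{-(n-i)\} \]
for every $i$ with $0 \leq i \leq n$, which is exactly the claim. Naturality of the filtration in $f$ follows from \Cref{t:relHTSS}.4, since the construction of the abutment filtration of a spectral sequence is functorial in morphisms of spectral sequences.

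No additional obstacle arises: once the spectral sequence is in place and degenerates, the Hodge filtration is a purely formal consequence. The only minor point worth spelling out is that we are working with sheaves of $\wh\O_S$-modules on $S_\proet$ rather than with modules over a ring, but the construction of the abutment filtration of a first-quadrant spectral sequence is valid in any abelian category, so no modification is needed.
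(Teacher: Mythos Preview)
Your proposal is correct and matches the paper's (implicit) approach: the Corollary is stated in the paper without proof precisely because it follows immediately from \Cref{t:relHTSS} by taking the abutment filtration of the degenerate spectral sequence, exactly as you describe. The only thing the paper adds beyond what you wrote is the explicit remark (in the sentence following the Corollary) that naturality is part of the package, which you also cover via \Cref{t:relHTSS}.4.
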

	\begin{Remark}
	\begin{enumerate}
		\item The assumption that $K$ is perfectoid is only necessary to make sense of $\Omega^j_{X|S}\{-j\}$ as an analytic vector bundle on $X$. Alternatively, it suffices that $K$ contains all $p$-power unit roots, then we can replace $\Omega^j_{X|S}\{-j\}$ by the Tate twist $\Omega^j_{X|S}(-j)$.
		\item We note that already the case of $S=\Spa(K)$ yields a generalisation of the absolute Hodge--Tate sequence to more general perfectoid base fields. However, we caution that if $K$ is not algebraically closed, it is in general not true  that $(R^{i+j}f_{\proet\ast}\wh\Z_p)\otimes_{\wh\Z_p} \wh\O$ is the vector bundle associated to $H^{i+j}_{\et}(X,\Z_p)\otimes_{\Z_p} K$ on $\Spa(K)$.
	\end{enumerate}
	\end{Remark}
	\begin{Remark}
	As explained in detail in the introduction, closely related results to part 1 have been obtained by Abbes--Gros \cite{AbbesGros_relativeHodgeTate}, Caraiani--Scholze \cite[Corollary~2.2.4]{CaraianiScholze} and He \cite[Theorem 12.7]{he2022cohomological}. All of these work in the case that $ X\to  S$ is defined over a discretely valued field $K_0$ with perfect residue field, where there is an additional Galois action  to take into account. We are working here in a more general geometric situation, where there is not necessarily any Galois action. However, in our setup, if a model over $K_0$ is given, we recover the Galois action by $\mathrm{Gal}(K|K_0)$  on the sequence using 4., the naturality of the construction.
	
	Second, we also mentioned in the introduction  the relative Hodge--Tate sequence of Gaisin--Koshikawa \cite[Theorem~7.2]{Gaisin-Koshikawa}: This assumes that $K$ is algebraically closed, and that $f$ admits a smooth proper admissible formal model. So in comparison, we can remove these additional assumptions. Furthermore, in \Cref{p:Deligne-constant-fibre-dim}, we show that the additional condition imposed in \cite{Gaisin-Koshikawa} that $R^if_{\an\ast}\wtOm^j_{X|S}$ is locally free is satisfied  when $S$ is reduced.
	\end{Remark}
	\begin{Remark}
	The fact that there exist non-analytic pro-\'etale vector bundles that are extensions of analytic vector bundles is not surprising: Via the local $p$-adic Simpson correspondence (e.g.\ in its version \cite[Theorem~6.5]{heuer-sheafified-paCS}), these locally correspond to nilpotent Higgs bundles.
	\end{Remark}
	
	We recall that based on an earlier construction of Faltings, Scholze's idea for the Hodge--Tate sequence in the absolute case is to realise this sequence as the Leray sequence for the morphism $\nu_X:X_\proet\to X_\an$. For our generalisation to the relative case, the basic idea is to instead consider the commutative diagram of sites
	\[\begin{tikzcd}
	X_\proet\arrow[d,"\nu"]\arrow[r,"f_\proet"]&  S_\proet\arrow[d,"\nu"] \\
	X_{\an} \arrow[r,"f_{\an}"] &    S_{\an}.
	\end{tikzcd}\]

	\begin{proof}[Proof of \Cref{t:relHTSS}]
	In the case that $ S=\Spa(C,\O_C)$, the spectral sequence \Cref{eq:rel-HT-in-5} is due to Scholze \cite[Theorem~3.20]{ScholzeSurvey}, and the degeneration is due to Bhatt--Morrow--Scholze \cite[Theorem 1.7.(ii)]{BMS}. 
	We begin our proof of the relative version by deducing the following $p$-adic analytic analogue of a Theorem of Deligne for schemes \cite[Th\'eor\`eme~5.5]{Deligne-HTdeg}:
	
	\begin{Theorem}\label{p:Deligne-constant-fibre-dim}
		Let $C$ be the completion of an algebraic closure of $K$.
		Let $f: X\to  S$ be a smooth proper morphism of reduced rigid spaces over $K$. Let $i,j\geq 0$.
		\begin{enumerate}
		\item For varying $s\in  S(C)$, the Hodge numbers \[h^{j,i}(X_s):=\dim_CH^i(X_s,\Omega^j_{X_s})\]
		of the geometric fibres of $ X\to  S$ are Zariski-locally constant on $ S$.
		\item 	The $\O_S$-module $R^if_{\ast}\Omega^j_{X|S}$ is finite locally free.
		\item For any morphism $g:S'\to S$ from a rigid or perfectoid space, the base-change map
		\[ g^{\ast}R^if_{\ast}\Omega^j_{X|S}\to R^if'_{\ast}\Omega^j_{X'|S'}\]
		is an isomorphism, where $f':X':=X\times_SS'\to S'$ is the base-change of $f$.
	\end{enumerate}
	\end{Theorem}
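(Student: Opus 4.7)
Overall strategy. Parts 2 and 3 will follow from part 1 by the cohomology-and-base-change results of Section~\ref{s:cohom-BC}, applied to the $S$-flat coherent sheaf $\Omega^j_{X|S}$ (it is $S$-flat because $f$ is smooth, so $\Omega^j_{X|S}$ is a vector bundle on $X$ and $X$ is flat over $S$). The heart of the theorem is therefore part 1, the local constancy of the Hodge numbers $h^{j,i}(X_s)$. My plan is to compute, at each geometric point $s\in S(C)$, the Hodge sum $\sum_{i+j=n} h^{j,i}(X_s)$ via the absolute Hodge--Tate decomposition on the fibre $X_s$, identify it with the rank at $s$ of the pro-\'etale vector bundle $R^nf_{\proet\ast}\wh\O_X$, use Zariski-local constancy of this rank on $|S|$, and then extract local constancy of each individual Hodge number by invoking upper semi-continuity.

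Key computation. By \Cref{c:PCT-limit}.3 the fibre of $R^nf_{\proet\ast}\wh\O_X$ at $s$ is $H^n_\proet(X_s,\wh\O_{X_s})$. The absolute Hodge--Tate spectral sequence \eqref{eq:HTss-absolute} for $X_s$, together with its degeneration due to Bhatt--Morrow--Scholze, yields
\[ \dim_C s^\ast R^nf_{\proet\ast}\wh\O_X \;=\; \sum_{i+j=n} h^{j,i}(X_s). \]
Now \Cref{c:PCT-limit}.2 shows that $R^nf_{\proet\ast}\wh\O_X$ is a pro-\'etale vector bundle, but a priori its rank is only pro-\'etale locally constant. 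The decisive refinement is \Cref{t:Sch10.5.1}: each $R^nf_{\et\ast}\Z/p^k$ is already \'etale-locally constant on $S_\et$; since \'etale morphisms of rigid spaces are open on the underlying topological spaces, its $\Z/p^k$-rank at geometric points is Zariski-locally constant on $|S|$. Passing to the inverse limit in $k$ and then tensoring with $\wh\O_S$ preserves ranks, so $\dim_C s^\ast R^nf_{\proet\ast}\wh\O_X$ is Zariski-locally constant on $S(C)$; hence so is $\sum_{i+j=n}h^{j,i}(X_s)$ for every $n\in\N$.

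From the sum to the summands. By \Cref{l:upper-semicts}.1 applied to $\Omega^j_{X|S}$, each individual Hodge number $h^{j,i}(X_s)$ is upper semi-continuous on $S(C)$. A short point-set argument finishes part 1: if a finite sum of integer-valued upper semi-continuous functions is Zariski-locally constant, then each summand is. Indeed, near any $s_0$ upper semi-continuity provides a Zariski-open neighbourhood on which $h^{j,i}(X_s)\leq h^{j,i}(X_{s_0})$ for every pair $(i,j)$ with $i+j=n$, and equality of the constant sums forces pointwise equality term by term. Having established part 1, \Cref{t:cohomology-and-base-change-rigid} immediately gives part 2, and the two cases of part 3 follow from \Cref{t:cohomology-and-base-change-perfectoid}.(a): the rigid case via assumption~1 there, the perfectoid case via assumption~2, using that $\Omega^j_{X|S}$ is a vector bundle and $f$ is smooth. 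The main obstacle, and the reason this strategy is the natural one, is the passage from the coarser pro-\'etale local constancy of the rank of $R^nf_{\proet\ast}\wh\O_X$ to Zariski-local constancy on $|S|$; this is precisely what makes the \'etale (not merely pro-\'etale) statement of \Cref{t:Sch10.5.1} essential.
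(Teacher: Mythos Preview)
Your proposal is correct and follows essentially the same route as the paper: upper semi-continuity of each $h^{j,i}$ from \Cref{l:upper-semicts}, identification of $\sum_{i+j=n}h^{j,i}(X_s)$ with the fibre rank of $R^nf_{\proet\ast}\wh\O_X$ via \Cref{c:PCT-limit} and the absolute Hodge--Tate degeneration of Bhatt--Morrow--Scholze, then \Cref{t:cohomology-and-base-change-rigid} and \Cref{t:cohomology-and-base-change-perfectoid}.(a) for parts 2 and 3. The only difference is that where the paper simply notes that the rank of the pro-\'etale vector bundle $R^nf_{\proet\ast}\wh\O_X$ is Zariski-locally constant on $S$, you argue more elaborately by going back to the \'etale local constancy of each $R^nf_{\et\ast}\Z/p^k$ from \Cref{t:Sch10.5.1}; this works (the tower of \'etale local systems gives compatible identifications of $H^n_{\et}(X_s,\Z_p)$ along connected components), but is not needed, since any integer-valued locally constant function on $S_\proet$ is already constant on connected components of $|S|$, so being a pro-\'etale vector bundle suffices directly.
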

	Here in 3, we use that there is a general notion of K\"ahler differentials for morphisms of topologically finite type, which is compatible with base-change. Alternatively, when $S'$ is perfectoid, we could for simplicity  define $\Omega^j_{X'|S'}$ to be the pullback of $\Omega^j_{X|S}$ along $X'\to X$.
	\begin{proof}
		Based on our preparations from \S3, we can take inspiration from Deligne's strategy, replacing the de Rham with the Hodge--Tate sequence:
		By \Cref{l:upper-semicts}.1, the functions
		\[  S(C)\to \mathbb Z, \quad s\mapsto \dim_CH^i(X_s,\Omega^j_{X_s})\]
		are upper semi-continuous for any $i,j\geq 0$. On the other hand, by \Cref{t:relHTSS} in the case of $ S=\Spa(C,\O_C)$ due to Scholze and Bhatt--Morrow--Scholze, for any fixed $n$, we have
		\[ \textstyle\sum_{i+j=n}\dim_CH^i(X_s,\Omega^j_{X_s})=\dim_{C}H^n_{\proet}(X_s,\wh\O_{X_s}).\]
		By \Cref{c:PCT-limit}.2-3, the right hand side is the fibre dimension of the pro-\'etale vector bundle $R^nf_{\proet\ast}\wh\O_X$ on $ S_{\proet}$. It follows that this is Zariski-locally constant on $ S$. In combination, it follows from upper semicontinuity that each of the summands is constant. This proves 1.
		
			Part 2 follows from part 1 by \Cref{t:cohomology-and-base-change-rigid}.
			
			Part 3 follows from part 1 by 
			\Cref{t:cohomology-and-base-change-perfectoid}.(a).
	\end{proof}
	
	Turning to the proof of \Cref{t:relHTSS},
	part 2 is given by \Cref{p:Deligne-constant-fibre-dim}.2 and \Cref{c:PCT-limit}.
	For part 1, let $\wtOm^j_{ X| S}:=\Omega^j_{ X| S}\{-j\}$. We consider for any affinoid perfectoid object $g:S'\to  S$ in $ S_\proet$ with base change $g':X':=X\times_{ S}S'\to  X$ the Leray sequence for the morphism $X'_{\proet}\to X'_{\an}$. By \Cref{p:prop28}, this is of the form
	\begin{equation}\label{eq:leray-in-proof-of-relHTSS}
		H^i_\an( X',g'^\ast\wtOm^j_{ X| S})\Rightarrow H^{i+j}_\proet( X',\wh\O_X).
	\end{equation} The pro-\'etale sheafification in $S'$ of the abutment is $R^{i+j}f_{\proet\ast}\wh\O_X$. By \Cref{c:PCT-limit}, this is
	\[ R^{i+j}f_{\proet\ast}\wh\O_X=R^{i+j}f_{\proet\ast}\wh{\Z}_p\otimes_{\wh{\Z}_p}\wh\O_S.\]
	Hence the sheafification of the abutment of \Cref{eq:leray-in-proof-of-relHTSS} in $S'$ gives the desired abutment of \Cref{eq:rel-HT-in-5}.

	We now turn to the term on the left hand side of  \Cref{eq:leray-in-proof-of-relHTSS}: Its sheafification with respect to $S'_\an$ is 	$R^if'_{\ast}g'^\ast\wtOm^j_{ X| S}$ where $f':X_{S'}\to S'$ is the natural map. By \Cref{p:Deligne-constant-fibre-dim}.3, this equals
	\begin{equation}\label{eq:bc-in-proof-of-rel-HT}
		R^if'_{\ast}g'^\ast\wtOm^j_{ X| S} =g^\ast  R^if_{\ast}\wtOm^j_{ X| S}.
	\end{equation}
	It follows that the pro-\'etale sheafification of this term on $S_\proet$ is equal to $\nu^{\ast}R^if_{\ast}\wtOm^j_{ X| S}$. We thus obtain the spectral sequence described in 1.
	
	We note that in the special case of $ S=\Spa(C)$, this specialises to Scholze's construction.
	
	The naturality described in part 4 follows from the construction, and the base-change property for Cartesian squares follows from \Cref{t:cohomology-and-base-change-perfectoid}.(a).
	  In particular, it follows that for any geometric point $s\in  S(C)$, the fibre of $E$ over $s:\Spa(C)\to  X$ is canonically isomorphic to the Hodge--Tate sequence \eqref{eq:HTss-absolute} of the fibre $f_s: X_s\to \Spa(C)$ of $f$.
	
	To see part 3, we need to show the vanishing of the morphism of vector bundles
	\begin{equation}\label{eq:deg-map-relHTSS}
		\nu^{\ast}R^if_{\ast}\wtOm^j_{ X| S}\to\nu^{\ast}R^{i+2}f_{\ast}\wtOm^{j-1}_{ X| S}
	\end{equation}
	for all $i,j$. To do so, let us first assume that $ S$ is smooth. Then $\nu_{\ast}\wh\O_S=\O_S$ and therefore part 2 guarantees that by applying $\nu_\ast$, the map \eqref{eq:deg-map-relHTSS} corresponds to a morphism of analytic vector bundles
	\[ R^if_{\ast}\wtOm^j_{ X| S}\to R^{i+2}f_{\ast}\wtOm^{j-1}_{ X| S}.\]
	To see that this vanishes, it suffices to check the vanishing on fibres because $ S$ is Jacobson. But by part 4 and the aforementioned case of $S=\Spa(C,C^+)$ due to Bhatt--Morrow--Scholze, we have degeneration in every fibre. This shows the degeneration over $ S$.
	
	Finally, we deduce the more general case of reduced $S$ from this: Since we know already that \eqref{eq:deg-map-relHTSS} is a morphism of vector bundles on $ S_\proet$, we can check its vanishing after pullback to $ S_v$. We now use that  by \cite[Corollary 2.4.8]{Guo}, there exists a v-cover $h: S'\to  S$ by a smooth rigid space. It follows that we can check the vanishing of the morphism \Cref{eq:deg-map-relHTSS} after pullback to $ S'_v\to  S_v$. By part 4, this pullback is identified with (the pullback along $ S'_v\to  S'_\proet$ of) the transition map of the relative Hodge--Tate spectral sequence for the base-change $ X\times_{ S} S'\to  S'$. Since $ S'$ is smooth, we have already shown that this vanishes.
	\end{proof}
	\begin{Remark}
		We now explain the remark in the introduction that the Theorem holds more generally without the assumption that $S$ is reduced if we know a priori that $R^if_{\ast}\Omega_{ X| S}^j$ is locally free for all $i,j\geq 0$: Let $ S=\Spa(A)$ be any affinoid rigid space and let $ r:S^{\red}=\Spa(A/\mathfrak n)\to  S$ be its reduction, where $\mathfrak n\subseteq A$ is the nilradical. Let $ X^{\red}\to  S^{\red}$ be the base-change. 
		Since $(S_\proet,\wh \O_S)= (S^{\red}_{\proet},\wh \O_{S^\red})$ as ringed sites, the abutment of the spectral sequence \Cref{eq:rel-HT-in-5} is insensitive to the non-reduced structure. Hence we need to see that the same is true for the left hand side. To see this, we may identify $ S_\an= S^{\red}_\an$ to consider the short exact sequence
		\[ 0\to \mathfrak n\O_{ S}\to \O_{ S}\to \O_{ S^\red}\to 0.\]
		Pulling back to $ X_\an= X_{\an}^\red$ along $f$ and tensoring with  $\Omega_{ X| S}^j$, we obtain an exact sequence
		\[ 0\to f^\ast\mathfrak n\otimes_{\O_{ X}} \Omega_{ X| S}^j\to \Omega_{ X| S}^j\to \Omega_{ X^\red| S^\red}^j\to 0,\]
		where on the right we have used that $\Omega_{ X^\red| S^\red}^j=\Omega_{ X| S}^j\otimes_{\O_S}\O_{S^\red}$.
		Now we apply $Rf_{\an\ast}$ and use the projection formula to obtain a distinguished triangle
		\[ \mathfrak n\otimes Rf_{\ast}\Omega_{ X| S}^j\to Rf_{\ast}\Omega_{ X| S}^j\to Rf_{\ast}\Omega_{ X^\red| S^\red}^j\]
		When each $R^if_{\ast}\Omega_{ X| S}^j$ is locally free, then this gives rise for any $i\geq 0$ to a short exact sequence
		\[ 0\to \mathfrak n\otimes R^if_{\ast}\Omega_{ X| S}^j\to R^if_{\ast}\Omega_{ X| S}^j\to R^if_{\ast}\Omega_{ X^\red| S^\red}^j\to 0.\]
		The point is now that the morphism  $(S_{\proet},\wh\O_S)\to  (S_\an,\O_{S})$ factors through the morphism $ (S^\red_\an,\O_{S^\red})\to (S_{\an},\O_S)$. It follows that $\nu^{\ast}(\mathfrak n\otimes R^if_{\ast}\Omega_{ X| S}^j)=0$, hence the natural map
		\[ \nu^\ast R^if_{\ast}\Omega_{ X| S}^j\to \nu^\ast R^if_{\ast}\Omega_{ X^\red| S^\red}^j\]
		is an isomorphism for every $i$. Now all statements follow from the reduced case.
	\end{Remark}
	We can easily deduce a v-topological version of the Hodge--Tate sequence:
	\begin{Corollary}\label{c:HTrelv}
	Let $f: X\to  S$ be a smooth proper morphism of reduced rigid spaces over $K$. Then there is a natural spectral sequence of v-vector bundles on $S_v$
	\[E_2^{ij}=\lambda^{\ast}R^if_{\ast}\wtOm^j_{ X| S}\Rightarrow (R^{i+j}f_{v\ast}\wh\Z_p)\otimes_{\wh\Z_p} \O_S\]
	which degenerates at the $E_2$-page.
	\end{Corollary}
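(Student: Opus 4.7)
The plan is to obtain the v-topological spectral sequence simply by pulling back the pro-\'etale spectral sequence of \Cref{t:relHTSS} along the natural morphism of ringed sites $\mu: S_v \to S_\proet$, and then to identify each of the resulting terms using the v-topological counterparts of the key inputs recorded in \S4.

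Concretely, I would apply the exact pullback functor $\mu^{\ast}$ to the sequence \eqref{eq:rel-HT-in-5}. Since $\mu^{\ast}$ is exact on sheaves of abelian groups, this immediately produces a spectral sequence on $S_v$ with
\[E_2^{ij} = \mu^{\ast}\nu^{\ast}R^if_{\ast}\wtOm^j_{X|S} \Rightarrow \mu^{\ast}\bigl((R^{i+j}f_{\proet\ast}\wh\Z_p) \otimes_{\wh\Z_p} \wh\O_S\bigr).\]
Using the factorisation $\lambda = \nu \circ \mu$ of morphisms of sites, the $E_2$-term rewrites as $\lambda^{\ast}R^if_{\ast}\wtOm^j_{X|S}$, matching the statement. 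For the abutment, $\mu^{\ast}$ commutes with tensor products of sheaves of modules, one has $\mu^{\ast}\wh\O_S = \O_S$ in the v-topology, and \Cref{l:compare-proet-v-Zp} identifies $\mu^{\ast}R^{i+j}f_{\proet\ast}\wh\Z_p = R^{i+j}f_{v\ast}\wh\Z_p$. Thus the abutment becomes $(R^{i+j}f_{v\ast}\wh\Z_p) \otimes_{\wh\Z_p} \O_S$ as required.

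Next I would check the v-vector bundle properties. By \Cref{p:Deligne-constant-fibre-dim}.2, the sheaf $R^if_{\ast}\wtOm^j_{X|S}$ is an analytic vector bundle on $S_\an$, and hence its pullback $\lambda^{\ast}R^if_{\ast}\wtOm^j_{X|S}$ is a v-vector bundle on $S_v$. For the abutment, \Cref{c:PCT-limit-vversion} says exactly that $(R^{i+j}f_{v\ast}\wh\Z_p) \otimes_{\wh\Z_p} \O_S = R^{i+j}f_{v\ast}\O$ is a v-vector bundle.

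Finally, degeneration at $E_2$ is automatic: the differentials on the pulled-back $E_2$-page are the images under $\mu^{\ast}$ of the differentials of the pro-\'etale sequence, and these vanish by \Cref{t:relHTSS}.3. I do not expect any substantive obstacle, as all the essential work lies in the pro-\'etale version of \Cref{t:relHTSS} and in the v-topological versions of the Primitive Comparison Theorem and the Hodge--Tate comparison already established in \S4; the present statement is then a formal consequence.
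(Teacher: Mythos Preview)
Your proposal is correct and follows essentially the same approach as the paper: the paper's proof also obtains the v-topological sequence by applying $\mu^\ast$ to \eqref{eq:rel-HT-in-5} and identifying the abutment via \Cref{l:compare-proet-v-Zp}, noting as an aside that one could alternatively rerun the argument of \Cref{t:relHTSS} directly in the v-topology. Your write-up simply spells out the identifications of the $E_2$-terms, the abutment, the vector-bundle properties, and the degeneration in more detail than the paper does.
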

	\begin{proof}
	By \Cref{l:compare-proet-v-Zp}, we have $(R^{n}f_{v\ast}\wh\Z_p)\otimes_{\wh\Z_p} \O=\mu^\ast( R^{n}f_{\proet\ast}\wh{\Z}_p\otimes_{\wh{\Z}_p}\wh\O_S)$. The Corollary thus follows from applying $\mu^\ast$ to \eqref{eq:rel-HT-in-5}. 
	Alternatively, we could follow the same strategy as in the proof of \Cref{t:relHTSS}.
	\end{proof}
	
	\section{Splittings in the case of curve fibrations}\label{s:splittings}
	Let $X$ be a smooth proper rigid space $X$ over $C$. The Hodge filtration admits a canonical splitting when $X$ has a model over a $p$-adic subfield $K_0$ that is discretely valued with perfect residue field \cite{tate1967p}. But in general, there is no canonical such splitting, in contrast to the complex Hodge decomposition. Rather, a splitting is induced by the datum of a lift of $X$ along the square-zero thickening $B_{\dR}^+/\xi^2\to K$, and the splitting in the arithmetic case can be explained from this in view of the natural morphism $K_0\to B_{\dR}^+/\xi^2$:
	
	\begin{Proposition}[{\cite[Proposition 7.2.5]{Guo}}]\label{p:Guo-decomposition}
	Let $X$ be a smooth rigid space over $K$. Let  $\mathbb X$  be a  lift of $X$ to $B_{\dR}^+/\xi^2$, this always exists when $X$ is proper. Then $\mathbb X$ induces a decomposition
	\[ R\nu_{\ast}\wh\O_X=\oplus_{i=0}^n \wtOm^i_X[-i].\]
	This decomposition is functorial in pairs $(X,\mathbb X)$ of a smooth rigid space with a $B_{\dR}^+/\xi^2$-lift.
	\end{Proposition}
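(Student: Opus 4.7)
The plan is two-step. First, I use the lift $\mathbb{X}$ to construct a morphism $s\colon \wtOm^1_X[-1] \to R\nu_*\wh\O_X$ that splits the canonical projection onto the degree-$1$ cohomology sheaf. Second, I extend $s$ via the $\mathbb{E}_\infty$-algebra structure on $R\nu_*\wh\O_X$ to produce morphisms $s_i\colon \wtOm^i_X[-i] \to R\nu_*\wh\O_X$ in all degrees, which together assemble into a quasi-isomorphism.

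For step one, I consider the fundamental short exact sequence on $X_\proet$
\[ 0 \to \wh\O_X\{1\} \to \mathcal{E} \to \wh\O_X \to 0,\]
where $\mathcal{E}$ is the sheaf of first-order infinitesimal thickenings coming from the relative version of Fontaine's period sheaf $\mathbb{B}_{\dR}^+/\xi^2$. The extension class of this sequence lies in $H^1(X_\proet, \wh\O_X\{1\})$, and its image under $R\nu_*$ is (up to normalisation) precisely the obstruction class to lifting $X$ to $B_{\dR}^+/\xi^2$. The choice of lift $\mathbb{X}$ therefore produces a compatible system of pro-étale-local splittings of $\mathcal{E}$, pinning down a derived splitting globally. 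Extracting the associated boundary map yields the desired morphism $s$, whose composition with the edge map to $R^1\nu_*\wh\O_X = \wtOm^1_X$ is the identity.

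For step two, I use that $R\nu_*\wh\O_X$ is a commutative algebra object in $D(X_\an,\O_X)$ whose graded cohomology algebra, by \Cref{p:prop28} together with the cup product, identifies with the exterior algebra $\bigwedge^*_{\O_X}\wtOm^1_X$. I define $s_i$ to be the composition
\[ \wtOm^i_X[-i] \to (\wtOm^1_X)^{\otimes i}[-i] \xrightarrow{s^{\otimes i}} (R\nu_*\wh\O_X)^{\otimes i} \xrightarrow{\mu} R\nu_*\wh\O_X,\]
where the first arrow is the antisymmetrisation and $\mu$ is the iterated product. By compatibility of the cup product with the exterior-algebra identification, the effect of $s_i$ on the $i$-th cohomology sheaf is the identity of $\wtOm^i_X$. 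Consequently $\bigoplus_{i=0}^n s_i\colon \bigoplus_{i=0}^n \wtOm^i_X[-i]\to R\nu_*\wh\O_X$ is a quasi-isomorphism, and its functoriality in pairs $(X,\mathbb{X})$ is immediate from the naturality of each ingredient.

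The main obstacle is step one: one must make precise how a lift $\mathbb{X}$ produces a genuinely derived-categorical splitting of $\mathcal{E}$ rather than a mere set-theoretic section, and show that varying the lift changes $s$ in a controlled way. This uses the full strength of the theory of de Rham period sheaves, specifically the interpretation of $\mathbb{B}_{\dR}^+/\xi^2$ as classifying first-order deformations of the pro-étale structure sheaf, together with Scholze's comparison relating these deformations to lifts of $X$ itself. Once step one is in place, step two is essentially formal, as is the existence of $\mathbb{X}$ when $X$ is proper, which follows from obstruction theory applied to the relevant class.
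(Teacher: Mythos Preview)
The paper does not give its own proof of this proposition: it is quoted verbatim from \cite[Proposition~7.2.5]{Guo} and used as a black box in \S\ref{s:splittings}. So there is no in-paper argument to compare against.

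That said, your two-step outline is exactly the strategy used in the cited source (and, earlier, in \cite[\S8]{BMS} in the integral setting): a lift to $B_{\dR}^+/\xi^2$ trivialises the Faltings extension
\[0\to \wh\O_X\{1\}\to \O\mathbb B_{\dR}^+/\mathrm{Fil}^2\to \wh\O_X\to 0\]
on $X_\proet$, which after applying $R\nu_\ast$ produces a section $\wtOm^1_X[-1]\to R\nu_\ast\wh\O_X$ of the truncation map; one then propagates this to all degrees via the multiplicative structure and the identification $R^j\nu_\ast\wh\O_X\cong\wedge^j\wtOm^1_X$, using that we are in characteristic $0$ so the antisymmetriser $\wtOm^i_X\to(\wtOm^1_X)^{\otimes i}$ is available. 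Your sketch is correct in spirit; the point you flag as the ``main obstacle'' is indeed where all the content lies, and making it precise requires the $\O\mathbb B_{\dR}^+$-formalism rather than just the extension class in $H^1$. One small caution: phrasing step one as ``the extension class lies in $H^1(X_\proet,\wh\O_X\{1\})$ and its image under $R\nu_\ast$ is the obstruction to lifting'' conflates two different objects; what you actually need is that the lift $\mathbb X$ gives a morphism of sheaves of rings $\nu^\ast\O_{\mathbb X}\to \O\mathbb B_{\dR}^+/\mathrm{Fil}^2$ over $\O_X$, and it is this map (not a cohomology class) that splits the extension at the level of sheaves.
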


	It has already been observed by Hyodo \cite{Hyodo-HTimperfect} for abelian varieties that in contrast to the absolute case, the relative Hodge--Tate filtration is in general not split. In the context of \Cref{t:intro-HT-spectral-seq}, this is reflected by the fact that the pro-\'etale vector bundle  $(R^{1}f_{\proet\ast}\wh\Z_p)\otimes_{\wh\Z_p} \wh\O$ is usually not analytic. Indeed, there is a beautiful explanation of this phenomenon in terms of the \mbox{$p$-adic} Simpson correspondence, which relates pro-\'etale vector bundles to Higgs bundles:  
	
	When $X$ has a model over a discretely valued $p$-adic subfield $K_0$, the pro-\'etale vector bundle $(R^{n}f_{\proet\ast}\wh\Z_p)\otimes_{\wh\Z_p} \wh\O$ in \Cref{t:intro-HT-spectral-seq} can be described in terms of ``relative Higgs cohomology'' via the local $p$-adic Simpson correspondence of  Liu--Zhu \cite[Theorem~2.1.5]{LiuZhu_RiemannHilbert}.
	
	 In the algebraic case, closely related results have been obtained independently by Abbes--Gros \cite[Corollaire 6.5.34]{AbbesGrosSimpsonII}: They show that the underlying vector bundle of the Higgs bundle associated to $(R^{n}f_{\proet\ast}\wh\Z_p)\otimes_{\wh\Z_p} \wh\O$ is indeed given by the direct sum of the $E_2^{i,j}$ in \Cref{t:intro-HT-spectral-seq}, but there is a non-trivial Higgs field given by Kodaira--Spencer maps. 
	
	We expect this description to generalise to our rigid analytic setting. However, exactly as in the absolute setup, there will be an additional choice of a $B_{\dR}^+/\xi^2$-lift entering the picture. We now treat the case of relative dimension 1, which illustrates nicely the phenomenon, and is already useful for applications to the spectral curve in non-abelian $p$-adic Hodge theory:
	
	\begin{Definition}
	Let $ S$ be a rigid space, then a relative curve $ C\to S$ is a proper morphism of rigid spaces $f: C\to  S$ of pure relative dimension $1$ with connected geometric fibres.
	\end{Definition}
	Let us assume that $S$ is reduced and connected.
	When $f:X\to S$ is a smooth relative curve, \Cref{t:relHTSS} amounts to  the data of an isomorphism $\nu^\ast R^1f_{\ast}\Omega^1_{X|S}=R^2f_{\proet\ast}\wh\Z_p\otimes \wh \O_S=\wh\O_S$ and a left-exact sequence of vector bundles on $S_\proet$
	\begin{equation}\label{eq:rel-HT-seq}
	0\to \nu^\ast R^1f_{\ast}\O\to R^1f_{\proet\ast}\wh\Z_p\otimes_{\wh\Z_p}\wh\O_S\xrightarrow{\HT} \nu^\ast f_{\ast}\wtOm_{ X| S}\to 0.
	\end{equation}
	There is by \Cref{p:Deligne-constant-fibre-dim} a well-defined genus $g=\rk_{\O_S} R^1f_{\ast}\O_X$ of $f$. The outer terms are analytic vector bundles of rank $g$ while the middle is a pro-\'etale vector bundle of rank $2g$.

	\begin{Definition}\label{d:KS}
		For a smooth relative curve $f: X\to  S$, consider the cotangent sequence
	\[0\to f^{\ast}\wtOm_{ S|K}\to \wtOm_{ X|K}\to \wtOm_{ X| S}\to 0.\]
	Following \cite[\S1.1]{Katz-pcurvature}, the Kodaira--Spencer map is the boundary map of $Rf_{\ast}$ on this:
	\[ \mathrm{KS}:f_{\ast}\wtOm_{ X| S}\to R^1f_{\ast}f^{\ast}\wtOm_{ S|K}=R^1f_{\ast}\O_X\otimes \wtOm_{ S|K}\]
	\end{Definition}
	
	\begin{Proposition}\label{p:rel-HT-splitting}
	Let $ S$ be a smooth rigid space and 
	let $f: X\to  S$ be a smooth relative curve. Then the maximal exact subsequence of \cref{eq:rel-HT-seq} that consists  of analytic coherent sheaves is given by the pullback to $\ker(\KS)\subseteq f_{\ast}\wtOm_{X|S}$. More precisely, $\im(\nu_{\ast}\HT)=\ker(\KS)$.
	\end{Proposition}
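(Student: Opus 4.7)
The plan is to apply $R\nu_{S*}$ to the short exact sequence \eqref{eq:rel-HT-seq} and identify the resulting connecting homomorphism with the Kodaira--Spencer map.

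Since $S$ is smooth, \Cref{p:prop28} gives $\nu_{S*}\wh\O_S=\O_S$ and $R^1\nu_{S*}\wh\O_S=\wtOm_S$. Because both $R^1f_*\O$ and $f_*\wtOm_{X|S}$ are analytic vector bundles by \Cref{t:relHTSS}.2, the projection formula for the morphism of ringed sites $\nu_S:(S_\proet,\wh\O_S)\to(S_\an,\O_S)$ yields $\nu_{S*}\nu^*F=F$ and $R^1\nu_{S*}\nu^*F=F\otimes\wtOm_S$ for $F\in\{R^1f_*\O,\ f_*\wtOm_{X|S}\}$. Applying $R\nu_{S*}$ to \eqref{eq:rel-HT-seq} then produces a four-term exact sequence
\[0\to R^1f_*\O\to \nu_{S*}(R^1f_{\proet\ast}\wh\Z_p\otimes_{\wh\Z_p}\wh\O_S)\xrightarrow{\nu_{S*}\HT}f_*\wtOm_{X|S}\xrightarrow{\partial} R^1f_*\O\otimes\wtOm_S,\]
so $\im(\nu_{S*}\HT)=\ker\partial$, and the proof reduces to the identification $\partial=\KS$.

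To identify $\partial=\KS$, my approach is to compare the two Grothendieck spectral sequences for the composition $\nu_S\circ f_{\proet}=f_{\an}\circ\nu_X:X_\proet\to S_\an$ applied to $\wh\O_X$; both converge to $R^\bullet(\nu_Sf_{\proet})_*\wh\O_X$ on $S_\an$. The Leray via $\nu_X$ followed by $f_{\an}$ has $E_2^{pq}=R^pf_{\an\ast}\wtOm^q_X$; since $f$ has relative dimension one, all higher differentials vanish and for $n=1$ one gets an extension $0\to R^1f_*\O\to R^1(\nu_Sf_{\proet})_*\wh\O_X\to f_*\wtOm_{X|K}\to 0$. Applying $Rf_{\an\ast}$ to the Koszul cotangent sequence $0\to f^*\wtOm_{S|K}\to\wtOm_{X|K}\to\wtOm_{X|S}\to 0$ on $X_\an$ refines the middle term via $0\to\wtOm_S\to f_*\wtOm_{X|K}\to\ker\KS\to 0$, producing a three-step filtration on $R^1(\nu_Sf_{\proet})_*\wh\O_X$ with graded pieces $R^1f_*\O$, $\wtOm_S$, $\ker\KS$, with the last boundary being $\KS$ by \Cref{d:KS}. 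The Leray via $f_{\proet}$ followed by $\nu_S$ produces instead a two-step filtration on the same abutment with sub $R^1\nu_{S*}\wh\O_S=\wtOm_S$. Identifying the two $\wtOm_S$-pieces canonically via the edge maps of $R\nu_{S*}\wh\O_S$ and tracing through the four-term sequence of the first paragraph forces $\partial$ to coincide with $\KS$.

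\textbf{Main obstacle.} The delicate point is the identification $\partial=\KS$: these are connecting homomorphisms in different spectral sequences on different sites ($S_\an$ versus $S_\proet$), so the comparison requires careful tracking of two Leray filtrations on the common abutment $R^\bullet(\nu_Sf_{\proet})_*\wh\O_X$, and in particular one must show that the $\wtOm_S$-subobject produced by the $\nu_S$-Leray is really the same $\wtOm_S$ arising inside $f_*\wtOm_{X|K}$ from the cotangent sequence. A conceptually cleaner (but heavier) alternative is to invoke the local $p$-adic Simpson correspondence as in \cite{LiuZhu_RiemannHilbert,AbbesGrosSimpsonII}, which identifies $R^1f_{\proet\ast}\wh\Z_p\otimes\wh\O_S$ with a nilpotent Higgs bundle whose Higgs field on the associated graded is precisely $\KS$; in that picture $\ker\KS$ is tautologically the maximal analytic subsheaf of the image of $\HT$, yielding the claim directly.
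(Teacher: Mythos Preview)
Your overall setup is right and is precisely the framework the paper uses: apply $R\nu_{S*}$ to \eqref{eq:rel-HT-seq} and compare the two Leray sequences for $\nu_S\circ f_{\proet}=f_{\an}\circ\nu_X$. The difference is that the paper \emph{does not} attempt to prove $\partial=\KS$, and thereby sidesteps exactly the obstacle you flag.

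Instead of identifying $\partial$, the paper observes that it suffices to show the edge map
\[
R^1(\nu\circ f_{\proet})_*\wh\O_X\longrightarrow \nu_*R^1f_{\proet*}\wh\O_X
\]
in the $5$-term exact sequence of the Leray for $\nu_S\circ f_{\proet}$ is \emph{surjective}. This amounts to the vanishing of the transgression $\delta:\nu_*R^1f_{\proet*}\wh\O_X\to R^2\nu_*\wh\O_S$, and the paper gets this by noting that the next map $\iota:R^2\nu_*\wh\O_S\to R^2(\nu\circ f_{\proet})_*\wh\O_X$ is injective, which in turn follows from the elementary fact that $\wtOm_S^2\hookrightarrow f_*\wtOm_X^2$. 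Once that surjectivity is in hand, the commutative square
\[
\begin{tikzcd}
R^1(\nu\circ f_{\proet})_*\wh\O_X \arrow[r,two heads]\arrow[d,two heads] & \nu_*R^1f_{\proet*}\wh\O_X \arrow[d,"\nu_*\HT"]\\
f_{\an*}\wtOm_{X|K} \arrow[r] & f_{\an*}\wtOm_{X|S}
\end{tikzcd}
\]
(left vertical from \eqref{eq:rel-HT-seq-small}, bottom from the cotangent sequence) immediately gives $\im(\nu_*\HT)=\im\big(f_*\wtOm_{X|K}\to f_*\wtOm_{X|S}\big)=\ker(\KS)$.

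So your plan is not wrong, but the step you single out as delicate---matching the two $\wtOm_S$-pieces and deducing $\partial=\KS$---is genuinely nontrivial to make precise, and the paper's trick of checking injectivity of $\iota$ on $\wtOm^2$ replaces that whole comparison with a one-line argument. Your alternative via Liu--Zhu/Abbes--Gros would also work but, as you say, imports much heavier machinery and in any case is restricted to the arithmetic setting in the cited references.
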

	
	\begin{Remark}
	When $f$ has a model over the $p$-adic subfield $K_0$, this Proposition has a very nice explanation in terms of Liu--Zhu's $p$-adic Simpson functor \cite[Theorem~2.1.(v)]{LiuZhu_RiemannHilbert}:
	This sends $V:=R^1f_{\proet\ast}\wh\Z_p\otimes_{\wh\Z_p}\wh\O_S$ to the Higgs bundle given by $R^1f_{\ast}\O\oplus f_{\ast}\wtOm_{ X| S}$ endowed with the nilpotent Higgs field defined by $\KS$. As a pro-\'etale vector bundle is analytic if and only if the associated Higgs field vanishes, this shows that $V$ is analytic precisely over $\ker(\KS)$.
	\end{Remark}
	
	\begin{proof}[Proof of \cref{p:rel-HT-splitting}]
	The maximal analytic subsequence is given by the image of the counit map $\nu^{\ast}\nu_{\ast}\to \id$ applied to the exact sequence. It thus suffices to prove the statement about $\im(\nu_{\ast}\HT)$. 
	For this we use the short exact sequence of \textit{analytic} vector bundles on $ S_{\an}$
	\begin{equation}\label{eq:rel-HT-seq-small}
		0\to R^1f_{\an\ast}\O\to R^1(\nu\circ f_{\proet})_{\ast}\wh\O_X\xrightarrow{\HT} f_{\an\ast}\wtOm_{ X|K}\to 0
	\end{equation}
	obtained from the Leray sequence for $f_{\an}\circ \nu$, using that $R^j\nu_{\ast}\wh\O_X=\wtOm^j_X$. Here the right-exactness follows from the fact that $R^2f_{\an\ast}\O=0$ by \cref{c:vanishing-in-degree>dim}.
	We now compare this to the relative Hodge--Tate sequence via the commutative diagram on $ S_{\an}$ with exact rows:
	\[\begin{tikzcd}
		R^1(\nu\circ f_\proet)_{\ast}\wh\O_X \arrow[r, '] \arrow[d,two heads] & \nu_{\ast}R^1f_{\proet\ast}\wh\O_X \arrow[d,  "\nu_{\ast}\HT"'] \arrow[r, "\delta"] & R^2\nu_{\ast}\wh\O_S\arrow[r,"\iota"] &R^2(\nu\circ f_\proet)_{\ast}\wh\O_X\\
		f_{\an\ast}\wtOm_{X} \arrow[r, '] & f_{\an\ast}\wtOm_{X| S} \arrow[r,  "\KS"]  & R^1f_{\ast}\wtOm_S&
	\end{tikzcd}
	\]
	Here the first row comes from the 5-term exact sequence of the Leray sequence for $\nu\circ f_{\proet}$, the second column comes from  applying $R\nu_{\ast}$ to \eqref{eq:rel-HT-seq}, and the bottom row is from \cref{d:KS}.
	To see that $\im(\nu_{\ast}\HT)=\ker(\KS)$, it thus suffices to show that $\delta=0$. For this it suffices to show that $\iota$ is injective: This is the sheafification of the map sending $U\in S_{\an}$ to
	$H^2_{\proet}(U,\wh\O_S)\to H^2_{\proet}( X_{U},\wh\O_{X})$.
	This is clearly injective because $\wtOm^2_S\to f_{\ast}\wtOm_{X}^2$ is. 
	\end{proof}
	
	In this sense, the following statement about splittings is therefore ``optimal'':
	
	\begin{Proposition}
		Let $S$ be a smooth rigid space and let $f: X\to  S$ be a smooth relative curve fibration. Then any lift $\mathbb X\to \mathbb S$ of $f$ to  $B_{\dR}^+/\xi^2$ induces a splitting of \Cref{eq:rel-HT-seq} over the subsheaf $\ker \mathrm{KS}\subseteq f_{\ast}\wtOm_{ X| S}$, functorial in the lift.
	\end{Proposition}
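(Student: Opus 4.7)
The plan is to use the lift $\mathbb X\to \mathbb S$ to apply \Cref{p:Guo-decomposition} to both $X$ and $S$ in a compatible way, first splitting \Cref{eq:rel-HT-seq-small} over $S_\an$, then descending this splitting along the surjection $\psi:f_{\an\ast}\wtOm_X\twoheadrightarrow \ker(\KS)$ induced by the cotangent sequence, and finally passing to the relative Hodge--Tate sequence \Cref{eq:rel-HT-seq} on $S_\proet$ via the adjunction $\nu^\ast\dashv \nu_\ast$.

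First, \Cref{p:Guo-decomposition} applied to $(X,\mathbb X)$ gives $R\nu_\ast\wh\O_X\cong \bigoplus_i \wtOm_X^i[-i]$. Applying $Rf_{\an\ast}$ and taking $H^1$ yields a direct-sum decomposition $R^1(\nu\circ f_\proet)_\ast\wh\O_X\cong R^1f_{\an\ast}\O_X\oplus f_{\an\ast}\wtOm_X$ compatible with \Cref{eq:rel-HT-seq-small}, and in particular a section $\sigma_X:f_{\an\ast}\wtOm_X\to R^1(\nu\circ f_\proet)_\ast\wh\O_X$ of the $\HT$ map there. Composing with the edge map $\phi:R^1(\nu\circ f_\proet)_\ast\wh\O_X\to \nu_\ast R^1f_{\proet\ast}\wh\O_X$ from the Leray spectral sequence for $\nu\circ f_\proet$ (the top row of the diagram in the proof of \Cref{p:rel-HT-splitting}), the commutativity of that diagram yields $\nu_\ast\HT\circ (\phi\circ\sigma_X)=\psi$.

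The crucial step is to verify that $\phi\circ\sigma_X$ vanishes on $\ker\psi=f_{\an\ast}f^\ast\wtOm_S\cong \wtOm_S$ (using $f_{\an\ast}\O_X=\O_S$), so that it descends to a well-defined $\tau:\ker(\KS)\to \nu_\ast R^1f_{\proet\ast}\wh\O_X$. Here the lift $\mathbb S$ enters via the naturality statement in \Cref{p:Guo-decomposition}, applied to the pair morphism $(X,\mathbb X)\to (S,\mathbb S)$: the canonical morphism $f_{\an}^\ast R\nu_\ast\wh\O_S\to R\nu_\ast\wh\O_X$ respects both Guo decompositions, with degree~$1$ component equal to the cotangent map $f^\ast\wtOm_S\to \wtOm_X$. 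Applying $Rf_{\an\ast}$ and using $R^0f_{\proet\ast}\wh\O_X\cong \wh\O_S$ from \Cref{c:PCT-limit}, one then identifies the restriction $\sigma_X|_{\wtOm_S}$ with the edge map $\iota_S:\wtOm_S=R^1\nu_\ast\wh\O_S\to R^1(\nu\circ f_\proet)_\ast\wh\O_X$ from the same Leray sequence. Exactness of the latter gives $\phi\circ\iota_S=0$, hence the desired vanishing.

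The descended map $\tau$ satisfies $\nu_\ast\HT\circ\tau$ equal to the inclusion $\ker(\KS)\hookrightarrow f_{\an\ast}\wtOm_{X|S}$; adjoining via the counit $\nu^\ast\nu_\ast\to \id$ and identifying the target through \Cref{c:PCT-limit}.1 produces the desired splitting $\nu^\ast\ker(\KS)\to (R^1f_{\proet\ast}\wh\Z_p)\otimes_{\wh\Z_p}\wh\O_S$ of \Cref{eq:rel-HT-seq} over $\ker(\KS)$. Functoriality in the lift is inherited directly from the functoriality in \Cref{p:Guo-decomposition}. The main obstacle is the identification $\sigma_X|_{\wtOm_S}=\iota_S$ above: it requires a diagram chase linking the Leray spectral sequences for $\nu$ on $X$, on $S$, and for $\nu\circ f_\proet=f_{\an}\circ \nu$, threading through the projection formula $Rf_{\an\ast}f^\ast\wtOm_S\cong \wtOm_S\otimes Rf_{\an\ast}\O_X$ to match the Guo-decomposition summands on the two sides.
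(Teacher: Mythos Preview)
Your proposal is correct and follows essentially the same approach as the paper. The only difference is presentational: the paper works on the level of sections over varying affinoid opens $U\in S_\an$ and then sheafifies, whereas you phrase everything globally in terms of derived pushforwards and Leray edge maps; in particular, the ``main obstacle'' you flag (the identification $\sigma_X|_{\wtOm_S}=\iota_S$) becomes immediate in the paper's formulation, since over each $U$ it is just the statement that the compatible Guo decompositions carry $H^0(U,\wtOm_S)\subseteq H^1_\proet(U,\wh\O_S)$ into $H^0(X_U,\wtOm_X)\subseteq H^1_\proet(X_U,\wh\O_X)$ via pullback, and the former lies in the kernel of the edge map to $H^0(U,R^1f_{\proet\ast}\wh\O_X)$.
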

	\begin{proof}
	For $U\in  S_{\an}$, the restriction $ X_U\to U$ admits a natural lift given by the restriction of $\mathbb X\to \mathbb S$ to the tube over $U$. \cref{p:Guo-decomposition} thus gives a decomposition, functorial in $U\in  S_{\an}$,
	\[ H^1_\proet(X_U,\wh\O_X)=H^1_{\an}(X_U,\O)\oplus H^0(X_U,\wtOm_{X})\]
	Moreover, due to the functoriality in \cref{p:Guo-decomposition}, the decomposition induced by $\mathbb S$,
	\[ H^1_\proet(U,\wh\O_S)=H^1_{\an}(U,\O)\oplus H^0(U,\wtOm_{S})\]
	is compatible with the above via the natural pullback maps.  Since the kernel of the map
	$H^1_\proet(X_U,\wh\O_X)\to H^0(U,R^1f_{\proet\ast}\wh\O_X)$
	clearly contains  $H^1_\proet(U,\O)$, we see that the kernel of the composition
	\[ H^0( X_U,\wtOm_{X})\to H^1_\proet(X_U,\wh\O_X)\to H^0(U,R^1f_{\proet\ast}\wh\O_X)\]
	contains $H^0(U,\wtOm_{U})$. Note that $H^0(U,\wtOm_{U})=H^0(X_U,f^{\ast}\wtOm^1_{U})$ as we can see locally on $U$ where $\wtOm_{U}\cong \O_U^k$ is trivial, due to the assumption that $f$ has connected geometric fibres. Upon sheafification, this shows that 
	$f_{\ast}\wtOm_{ X}\to \nu_{\ast}R^1f_{\proet\ast}\O$
	factors through $f_{\ast}\wtOm_{X}/f_{\ast}f^{\ast}\wtOm^1_{ S}$. According to \cref{d:KS}, the image of this quotient in $f_{\ast}\wtOm_{X| S}$ is precisely $\ker(\mathrm{KS})$.
	\end{proof}
	
	\appendix
	\section{Commuting Cohomology and Completion}
	In this appendix, we collect some facts about commuting cohomology and completion. These are well-known, but we do not know a reference in the literature.
	
	 One can also prove the statements in this section using derived completions, but we have instead chosen to give an elementary presentation.
	
	Let $R$ be any ring and let $\pi\in R$ be a non-zero-divisor. We say that an $R$-module $M$ has bounded $\pi$-torsion if there is $n\in \N$ such that $M[\pi^n]=M[\pi^m]$ for all $m\in \N$.
	
	\begin{Lemma}\label{l:ses-completion}
		Let $0\to M_1\to M_2\to M_3\to 0$ be a short exact sequence of $R$-modules with bounded $\pi$-torsion. Then 
		$0\to \varprojlim_n M_1/\pi^n\to \varprojlim_n M_1/\pi^n\to \varprojlim_n M_1/\pi^n\to 0$
		is exact.
	\end{Lemma}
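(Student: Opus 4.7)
The plan is to reduce exactness of the completed sequence to the vanishing of $\varprojlim$ and $R^1\varprojlim$ of certain auxiliary inverse systems, and then exploit bounded $\pi$-torsion to verify these vanishings via the Mittag-Leffler condition.

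The starting point is the snake lemma applied to multiplication by $\pi^n$ on the given short exact sequence, which yields
\[ 0 \to M_1[\pi^n] \to M_2[\pi^n] \to M_3[\pi^n] \xrightarrow{\delta_n} M_1/\pi^n \to M_2/\pi^n \to M_3/\pi^n \to 0.\]
Thus the naive sequence $0 \to M_1/\pi^n \to M_2/\pi^n \to M_3/\pi^n \to 0$ fails to be left-exact only by the kernel $K_n := \mathrm{im}(\delta_n)$, which is a quotient of $M_3[\pi^n]$. I would then split the six-term sequence into two genuine short exact sequences
\[ 0 \to K_n \to M_1/\pi^n \to C_n \to 0, \qquad 0 \to C_n \to M_2/\pi^n \to M_3/\pi^n \to 0,\]
where $C_n := (M_1/\pi^n)/K_n$, and pass to the $\varprojlim$ long exact sequences.

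The key technical step, and the only one that uses the bounded-torsion hypothesis, is to show that the inverse system $\{K_n\}$ is Mittag-Leffler-zero, i.e.\ for each $n$ the transition map $K_{n+k} \to K_n$ vanishes for $k$ large enough. A direct computation with the explicit formula for $\delta_n$ (lift $x \in M_3[\pi^n]$ to $y \in M_2$ and record $\pi^n y \in M_1/\pi^n M_1$) shows that the transition $M_3[\pi^{n+1}] \to M_3[\pi^n]$ compatible with $\delta$ is given by multiplication by $\pi$. If $M_3[\pi^N] = M_3[\pi^{N+k}]$ for all $k \geq 0$, then for $n \geq N$ the image of $M_3[\pi^{n+k}] \to M_3[\pi^n]$ equals $\pi^k M_3[\pi^N]$, which is zero as soon as $k \geq N$. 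Hence $\{K_n\}$ is pro-zero, so $\varprojlim K_n = 0$ and $R^1\varprojlim K_n = 0$.

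The conclusion then follows mechanically. The inverse systems $\{M_i/\pi^n\}$ and $\{C_n\}$ all have surjective transition maps, so their $R^1\varprojlim$ vanish. Taking $\varprojlim$ of the first auxiliary short exact sequence and using $\varprojlim K_n = 0 = R^1\varprojlim K_n$ yields $\widehat{M_1} \isomarrow \varprojlim C_n$; taking $\varprojlim$ of the second and using $R^1\varprojlim C_n = 0$ yields the short exact sequence $0 \to \varprojlim C_n \to \widehat{M_2} \to \widehat{M_3} \to 0$. Splicing the two gives the desired exactness of $0 \to \widehat{M_1} \to \widehat{M_2} \to \widehat{M_3} \to 0$. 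The main obstacle is precisely the bookkeeping with the connecting maps $\delta_n$ to confirm that the transitions on $K_n$ are indeed identified with multiplication by $\pi$; everything else is formal homological algebra.
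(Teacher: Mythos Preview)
Your proposal is correct and follows essentially the same approach as the paper: both start from the snake-lemma sequence $M_2[\pi^n]\to M_3[\pi^n]\to M_1/\pi^n\to M_2/\pi^n\to M_3/\pi^n\to 0$ and use that bounded $\pi$-torsion forces the torsion systems (with transition maps given by multiplication by $\pi$) to be pro-zero, whence $R\varprojlim$ vanishes on them. The paper compresses all of this into two lines, simply asserting that ``$R\lim$ vanishes for the first two terms,'' whereas you spell out the splitting into short exact sequences and the Mittag-Leffler verification explicitly.
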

	\begin{proof}
		We take the limit of the inverse system over $n\in \N$ given by the long exact sequences
		\[ M_2[\pi^n]\to M_3[\pi^n]\to M_1/\pi^n\to M_2/\pi^n\to M_3/\pi^n\to 0,\]
		in which $R\lim$ vanishes for the first two terms.
	\end{proof}
	\begin{Lemma}\label{l:commute-lim-and-H}
		Let $C^\bullet$ be a complex of $R$-modules which have bounded $\pi$-torsion. Let $i\in \Z$ and assume that $H^i(C^\bullet)$ and $H^{i+1}(C^\bullet)$ have bounded $\pi$-torsion. Then
		\[\textstyle H^i(\varprojlim_nC^\bullet/\pi^n)= \varprojlim_nH^i(C^\bullet)/\pi^n.\]
	\end{Lemma}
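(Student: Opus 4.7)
The plan is to reduce the statement to repeated applications of the preceding \Cref{l:ses-completion} by slicing the complex $C^\bullet$ into the standard short exact sequences governing coboundaries, cocycles, and cohomology. Write $Z^i := \ker(d^i)$ and $B^i := \mathrm{im}(d^{i-1})$, so that we have three short exact sequences:
\begin{equation*}
0 \to Z^i \to C^i \to B^{i+1} \to 0, \qquad 0 \to B^i \to Z^i \to H^i(C^\bullet) \to 0,
\end{equation*}
together with the analogous first sequence in degree $i-1$. The first input I would verify is that every term appearing is an $R$-module with bounded $\pi$-torsion: the $C^i$ and $C^{i-1}$ by hypothesis, the submodules $Z^i, Z^{i-1}, B^i, B^{i+1}$ as submodules of such (bounded $\pi$-torsion passes to submodules), and $H^i(C^\bullet), H^{i+1}(C^\bullet)$ by direct hypothesis.

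Next, I would apply \Cref{l:ses-completion} to each of these sequences. This yields that the natural sequences
\begin{equation*}
0 \to \wh Z^i \to \wh C^i \to \wh B^{i+1} \to 0,\qquad 0 \to \wh B^i \to \wh Z^i \to \wh{H^i(C^\bullet)} \to 0,\qquad 0 \to \wh Z^{i-1} \to \wh C^{i-1} \to \wh B^i \to 0
\end{equation*}
remain exact, where $\wh M := \varprojlim_n M/\pi^n$. Note that for the middle sequence I crucially use the hypothesis that $H^i(C^\bullet)$ has bounded $\pi$-torsion. I would also apply the same argument one degree higher to obtain the injection $\wh B^{i+1} \hookrightarrow \wh Z^{i+1} \hookrightarrow \wh C^{i+1}$, using the hypothesis on $H^{i+1}(C^\bullet)$.

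With these ingredients in hand, the rest is a formal identification. The differential $\wh d^i: \wh C^i \to \wh C^{i+1}$ factors as $\wh C^i \twoheadrightarrow \wh B^{i+1} \hookrightarrow \wh C^{i+1}$, so its kernel is precisely $\wh Z^i$. The differential $\wh d^{i-1}: \wh C^{i-1} \to \wh C^i$ factors as $\wh C^{i-1} \twoheadrightarrow \wh B^i \hookrightarrow \wh Z^i \hookrightarrow \wh C^i$, so its image is precisely $\wh B^i$. Therefore
\begin{equation*}
H^i(\wh C^\bullet) = \wh Z^i / \wh B^i \;\cong\; \wh{H^i(C^\bullet)} = \varprojlim_n H^i(C^\bullet)/\pi^n,
\end{equation*}
where the middle isomorphism is the second exact sequence above. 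The main (and only) obstacle is verifying that one genuinely has enough bounded $\pi$-torsion to invoke \Cref{l:ses-completion} in each instance; once that book-keeping is done, no further input beyond the preceding lemma is required.
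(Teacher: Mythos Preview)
Your proof is correct and follows essentially the same approach as the paper: slice the complex into the short exact sequences for cocycles, coboundaries, and cohomology, verify bounded $\pi$-torsion for each term, and apply \Cref{l:ses-completion} repeatedly. The only cosmetic difference is that to obtain the injection $\wh B^{i+1}\hookrightarrow \wh C^{i+1}$ you factor through $\wh Z^{i+1}$, whereas the paper instead shows directly that $C^{i+1}/\im d^i$ has bounded $\pi$-torsion (as an extension of $\im d^{i+1}$ by $H^{i+1}(C^\bullet)$) and applies \Cref{l:ses-completion} to $0\to \im d^i\to C^{i+1}\to C^{i+1}/\im d^i\to 0$; both routes are equivalent.
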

	\begin{proof}
		Let $d^i:C^i\to C^{i+1}$ be the differential. Then the short exact sequence
		\[0\to H^{i+1}(C^\bullet)\to C^{i+1}/\im d^{i}\to \im d^{i+1}\to 0 \]
		shows that $C^{i+1}/\im d^{i}$ has bounded $\pi$-torsion. It follows from \Cref{l:ses-completion} that the sequences
		\[ 0\to \ker d^{i}\to C^i\to \im d^i\to 0 \quad\text{ and } \quad  0\to \im d^{i}\to C^{i+1}\to  C^{i+1}/\im d^{i}\to 0\]
		stay exact after completion. The same holds for $i$ replaced by $i-1$. The lemma therefore now follows by applying \Cref{l:ses-completion} to the sequence
		\[ 0\to \im d^{i-1}\to \ker d^i\to H^i(C^\bullet)\to 0.\qedhere\]
	\end{proof}
	For the following, we allow an alternative setting (put in parenthesis), where we let $K$ be a perfectoid field, $R$ is a flat $\O_K$-algebra and $\pi\in \O_K$ is a pseudo-uniformiser.
	\begin{Proposition}\label{p:commute-hotimes-and-H}
	Let $C^\bullet$ be a complex of $\pi$-torsionfree $R$-modules. Let $S$ be an $R$-module such that each $C^i\otimes_RS$ is still $\pi$-torsionfree. Let $i\in \Z$ and assume either of the following:
		\begin{enumerate}
			\item  The $R$-modules $H^i(C^\bullet)$ and $H^{i+1}(C^\bullet)$ have bounded $\pi$-torsion and for every $n\in \N$, the  $R/\pi^n$-module $S/\pi^n$ is (almost) flat, or:
			\item $C^\bullet$ is bounded, and for every $j\in \N$, the $R$-modules $C^j$ and $H^j(C^\bullet)$ are (almost) flat.
		\end{enumerate}
		Then the following natural map is an (almost) isomorphism:
		\[ H^i(C^\bullet\hotimes_R S)=H^i(C^\bullet)\hotimes_R S.\]
	\end{Proposition}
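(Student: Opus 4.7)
The plan is to compute both sides of the desired (almost) isomorphism as inverse limits of mod-$\pi^n$ cohomology, then compare term by term, relying on \Cref{l:ses-completion} and \Cref{l:commute-lim-and-H}. Since each $C^j$ and each $C^j \otimes_R S$ is $\pi$-torsionfree, the $\pi$-adic completion $\hotimes$ is obtained levelwise as the limit of reductions modulo $\pi^n$, and one has the identity $(C^\bullet \otimes_R S)/\pi^n = C^\bullet \otimes_R (S/\pi^n)$. The central step in case (1) is to apply \Cref{l:commute-lim-and-H} to $D^\bullet := C^\bullet \otimes_R S$ to obtain
\[
H^i(C^\bullet \hotimes_R S) \;=\; H^i\bigl(\textstyle \varprojlim_n D^\bullet/\pi^n\bigr) \;=\; \textstyle \varprojlim_n H^i(D^\bullet)/\pi^n,
\]
after verifying that $H^i(D^\bullet)$ and $H^{i+1}(D^\bullet)$ have bounded $\pi$-torsion.

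For this verification, I would combine the universal coefficient sequence arising from $0 \to D^\bullet \xrightarrow{\pi^n} D^\bullet \to D^\bullet/\pi^n \to 0$,
\[
0 \to H^j(D^\bullet)/\pi^n \to H^j(D^\bullet/\pi^n) \to H^{j+1}(D^\bullet)[\pi^n] \to 0,
\]
with the (almost) flatness of $S/\pi^n$ over $R/\pi^n$, which lets me identify (almost) $H^j(D^\bullet/\pi^n)$ with $H^j(C^\bullet/\pi^n) \otimes_{R/\pi^n} (S/\pi^n)$. Comparing with the analogous sequence for $C^\bullet$ controls the $\pi^n$-torsion of $H^{j+1}(D^\bullet)$ in terms of the bounded torsion of $H^{j+1}(C^\bullet)$, and bounds the discrepancy between $H^i(D^\bullet)/\pi^n$ and $H^i(C^\bullet)/\pi^n \otimes_R (S/\pi^n)$ by an inverse system whose transition maps are multiplication by $\pi$ on the bounded-torsion module $H^{i+1}(C^\bullet)[\pi^\infty]$, hence pro-zero. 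Passing to the limit yields $H^i(C^\bullet \hotimes_R S) \aeq H^i(C^\bullet) \hotimes_R S$.

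For case (2) I would bypass \Cref{l:commute-lim-and-H} and work directly. A downward induction on $j$ through the bounded range of $C^\bullet$, using the short exact sequences $0 \to Z^j \to C^j \to B^{j+1} \to 0$ and $0 \to B^j \to Z^j \to H^j(C^\bullet) \to 0$, shows that every cycle $Z^j$ and boundary $B^j$ is (almost) flat. Tensoring with $S$ then (almost) preserves both sequences, and since $Z^j \otimes_R S$ and $B^j \otimes_R S$ are submodules of the $\pi$-torsionfree $C^j \otimes_R S$, they are (almost) $\pi$-torsionfree. \Cref{l:ses-completion} applies to each, showing that $\pi$-adic completion is exact on them, whence
\[
H^i(C^\bullet \hotimes_R S) \aeq (Z^i \otimes_R S)^{\wedge} / (B^i \otimes_R S)^{\wedge} \aeq (H^i(C^\bullet) \otimes_R S)^{\wedge} = H^i(C^\bullet) \hotimes_R S.
\]
The main obstacle I anticipate is the twofold use of the universal coefficient sequence in case (1), both to transfer bounded $\pi$-torsion from $C^\bullet$ to $D^\bullet$ and to show the relevant error inverse system is pro-zero; the almost-mathematics bookkeeping throughout is a secondary but nontrivial concern in both cases.
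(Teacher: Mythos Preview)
Your treatment of case~(1) is essentially the paper's argument: both compare the universal-coefficient sequences for $C^\bullet$ and $D^\bullet=C^\bullet\otimes_RS$ modulo $\pi^n$, use (almost) flatness of $S/\pi^n$ to identify the middle terms, read off bounded $\pi$-torsion of $H^{i+1}(D^\bullet)$ from surjectivity on the right, and then pass to the limit using \Cref{l:commute-lim-and-H}. The paper in fact runs \emph{both} cases through this single diagram; in case~(2) the middle vertical map is an isomorphism because the downward induction makes cycles and boundaries (almost) flat, so $H^j(C^\bullet/\pi^n)\otimes S/\pi^n \cong H^j(C^\bullet\otimes_R S/\pi^n)$ for the same reason.

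Your direct argument in case~(2) has a gap. To apply \Cref{l:ses-completion} to the sequence $0\to B^i\otimes_RS\to Z^i\otimes_RS\to H^i(C^\bullet)\otimes_RS\to 0$ you need $H^i(C^\bullet)\otimes_RS$ to have bounded $\pi$-torsion, and you have not established this: knowing that $H^i(C^\bullet)$ is (almost) flat over $R$, hence (almost) $\pi$-torsionfree, does \emph{not} by itself bound the $\pi$-torsion of $H^i(C^\bullet)\otimes_RS$, since nothing in the hypotheses forces $S$ itself to be $\pi$-torsionfree. The paper obtains exactly this missing fact from the diagram: the right vertical map exhibits $(H^{j}(C^\bullet\otimes_RS))[\pi^n]$ as an (almost) quotient of $H^{j}(C^\bullet)[\pi^n]\otimes S/\pi^n$, which in case~(2) is (almost) zero. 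Once you know $H^j(C^\bullet)\otimes_RS$ is (almost) $\pi$-torsionfree, your direct completion argument goes through; but the cleanest fix is simply to run case~(2) through the same universal-coefficient comparison you already use in case~(1), as the paper does.
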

	Here $\otimes$ denotes the $\pi$-adically completed tensor product.
	\begin{proof}
		We consider the natural morphism of short exact sequences of (almost) $R$-modules
		\[
		\begin{tikzcd}[column sep =0.2cm]
			0 \arrow[r] & H^i(C^\bullet\otimes_RS)/\pi^n \arrow[r]                     & H^i(C^\bullet\otimes_RS/\pi^n) \arrow[r]                     & {H^{i+1}(C^\bullet\otimes_RS)[\pi^n]} \arrow[r]                     & 0 \\
			0 \arrow[r] & H^i(C^\bullet)/\pi^n\otimes_{R/\pi^n}S/\pi^n \arrow[r] \arrow[u] & H^i(C^\bullet/\pi^n)\otimes_{R/\pi^n}S/\pi^n \arrow[r] \arrow[u] & {H^{i+1}(C^\bullet)[\pi^n]\otimes_{R/\pi^n}S/\pi^n} \arrow[r] \arrow[u] & 0
		\end{tikzcd}\]
		Here the assumption that $C^\bullet\otimes_RS$ is $\pi$-torsionfree gives rise to the top exact sequence, while either assumption 1 or 2 guarantees  that the bottom row is exact. It also shows that the middle map is an isomorphism: In the case of 1, this is clear. In the case of 2, this follows from the fact that by downward induction on $i$, also $\ker$ and $\im$ of the maps $d^i:C^i\to C^{i+1}$ are flat, hence $H^i(C^\bullet\otimes_RM)=H^i(C^\bullet)\otimes_RM$ for any $R$-module $M$.
		
		It follows that the right map is surjective, which shows that $H^{i+1}(C^\bullet\otimes_RS)$ still has  bounded $\pi$-torsion. 
		Applying $\varprojlim_n$, it follows that the two terms on the right vanish, hence
		\[\textstyle\varprojlim_n H^i(C^\bullet\otimes_RS)/\pi^n=\varprojlim_n H^i(C^\bullet)/\pi^n\otimes_{R/\pi^n}S/\pi^n=H^i(C^\bullet)\hotimes_RS.\]
		It remains to observe that since $C^\bullet\otimes_RS$ is $\pi$-torsionfree, \Cref{l:commute-lim-and-H} shows that
		\[\textstyle\varprojlim_n H^i(C^\bullet\otimes_RS)/\pi^n=H^i(C^\bullet\hotimes_RS).\qedhere\]
	\end{proof}

\end{document}